\newcommand{\be}{\begin{equation}}
\newcommand{\ee}{\end{equation}}
\newcommand{\beq}{\begin{eqnarray}}
\newcommand{\eeq}{\end{eqnarray}}
\newtheorem{thm}{Theorem}[section]
\newtheorem{prop}{Proposition}[section]
\newtheorem{cor}{Corollary}[section]
\newtheorem{lma}{Lemma}[section]
\newtheorem{df}{Definition}[section]
\newtheorem{rmk}{Remark}[section]
\def\II{\text{\rm II}}
\def\Pi{\displaystyle{\mathbb{II}}}
\def\p{\partial}
\def\R{\mathbb{R}}
\def\H{\mathbb{H}}
\def\p{\partial}
\def\lf{\left}
\def\ri{\right}
\def\e{\epsilon}
\def\ol{\overline}
\def\R{\Bbb R}
\def\wt{\widetilde}
\def\la{\langle}
\def\ra{\rangle}
\def\l{\lambda}
\def\Ric{\text{\rm Ric}}
\def\Dg{\Delta_g}
\begin{document}
\title{Einstein and conformally flat critical metrics of the volume functional}

\date{}
\renewcommand{\subjclassname}{\textup{2000} Mathematics Subject Classification}
 \subjclass[2000]{Primary 53C20; Secondary 58JXX}
\author{Pengzi Miao
%$^1$ 
and Luen-Fai Tam$^1$}
%\thanks{$^1$ }
\thanks{$^1$Research partially supported by Hong Kong RGC General Research Fund  \#GRF 2160357}

\address{The School of Mathematical Sciences, Monash University,
Victoria, 3800, Australia.} \email{Pengzi.Miao@sci.monash.edu.au}

\address{The Institute of Mathematical Sciences and Department of
 Mathematics, The Chinese University of Hong Kong,
Shatin, Hong Kong, China.} \email{lftam@math.cuhk.edu.hk}

\begin{abstract}
Let $R$ be a constant. 
%Consider the space 
Let $\mathcal{M}^R_\gamma $ be the space of 
smooth metrics $g$ on a given compact manifold  
$  \Omega^n  $ ($n\ge 3$) with smooth boundary 
$\Sigma $ such that $ g $ has constant scalar curvature 
$ R $ and $g|_{\Sigma}$ is a fixed
metric $\gamma$ on $\Sigma$. Let $V(g)$ be the volume of
$g\in\mathcal{M}^R_\gamma$.    In this work, 
we classify all Einstein or
conformally flat metrics which are critical points of 
$V( \cdot )$ in $\mathcal{M}^R_\gamma$.
\end{abstract}

\maketitle
 \markboth{Pengzi Miao  and Luen-Fai Tam}
 { }

\section{Introduction}

In \cite{MiaoTam08}, the authors studied variational
properties of the volume functional, constraint to the
space of metrics of constant scalar curvature   with a
prescribed boundary metric, on a given compact manifold
with boundary. More precisely,   let $  \Omega^n  $ ($n\ge
3$) be a connected, compact $ n $-dimensional manifold with
smooth boundary $ \Sigma $ with   a fixed boundary metric
$\gamma$. Let $R$ be a constant.   Let $
\mathcal{M}^R_\gamma $ be the space of metrics on $ \Omega
$ which have constant scalar curvature $ R $ and have
induced metric on $ \Sigma $ given by $ \gamma $. It was
proved  in \cite{MiaoTam08} that if $g\in \mathcal{M}^R_\gamma$
is an element such that 
 the first  Dirichlet  eigenvalue of
$ ( n -1 ) \Dg + R $ on $ \Omega $  is positive, then  $
\mathcal{M}^R_\gamma $ has a manifold structure near $ g $.
Hence one can study variation of the volume functional near
$g$ in $\mathcal{M}^R_\gamma$. The authors \cite{MiaoTam08}
proved that: {\it   $ g $ is a critical point of the usual
volume functional $ V (\cdot) $ in $ \mathcal{M}^R_\gamma  $
if and only if there is a
function $ \l $ on $ \Omega $ such that $ \l = 0 $ at $
\Sigma $ and \be \label{critical-inte-s0}
 -(\Delta_g\lambda)g+\nabla^2_g\lambda-\lambda \Ric(g)  =g
 \ \ \mathrm{on} \ \Omega, 
\ee 
where $ \Dg$, $ \nabla^2_g $ are the Laplacian, Hessian
operator with respect to the metric $ g $ and $ \Ric (g) $
is the Ricci curvature of $ g $.}

The above result suggests the following definition:

\begin{df}
Given a compact manifold $ \Omega $ with smooth boundary,
we say a metric $ g $ on $ \Omega $ is  a {\bf critical metric}
if $ g $ satisfies \eqref{critical-inte-s0} for some function
$ \l $ that vanishes on the boundary of $ \Omega$. 
\end{df}

It was  shown in \cite{MiaoTam08} that
equation \eqref{critical-inte-s0} alone indeed
 implies that $ g $ has constant scalar curvature.
 Hence, a critical metric
 necessarily has constant scalar curvature.

A natural question is to characterize  critical metrics.
We have the following results from \cite{MiaoTam08}:
  
 \begin{enumerate}
\item[(i)]  {\em If $ \Omega $
 is a bounded domain with smooth
 boundary in  a simply connected space form
 $ \mathbb{R}^n$, $ \mathbb{H}^n $ or $ \mathbb{S}^n$,
 then the corresponding space form metric is  a critical metric
 on $ \Omega $ if and only if $ \Omega $ is a geodesic ball
 (if $ \Omega \subset \mathbb{S}^n$, one
 assumes $ V (\Omega) < \frac{1}{2} V( \mathbb{S}^n) $).
 %On the other hand, 
 \item[(ii)] If $ g $ is a critical metric with zero
 scalar curvature on a compact manifold  $ \Omega $
such that the boundary of $ (\Omega, g)$ is isometric to
 a geodesic sphere $ \Sigma_0$ in  $ \R^n$,
 then $ V (g) \geq V_0$, where $ V_0 $ is
 the Euclidean volume enclosed by
  $ \Sigma_0 $. Moreover, 
 $ V(g) = V_0 $ if and only if
 $ (\Omega, g) $ is isometric to a Euclidean geodesic ball.}

\end{enumerate}

These results  suggest that critical metrics with a prescribed 
boundary metric seem to be rather rigid. For instance, 
we want to know if there exist non-constant sectional curvature 
critical metrics on a compact manifold whose boundary 
is isometric to a standard around sphere. If yes, what can 
we say about the structure of such metrics?

In this paper, we study this rigidity question under
certain  additional assumptions: We assume
the  manifold is  Einstein or is conformally flat.
Since space forms are both
Einstein and conformally flat, these considerations
are natural steps following
results in \cite{MiaoTam08}.
Our study of conformally flat critical metrics
are also motivated by the work of Kobayashi and Obata
\cite{Kobayashi,KobayashiObata1981}.

The first result we obtain in this work is the following:

\begin{thm}\label{Einstein-s0-t1}
Let $ (\Omega, g )$ be a
connected, compact,  Einstein manifold with a smooth
boundary $\Sigma$.  Suppose the metric $ g $ is a critical metric.
Then $ (\Omega^n,g) $ is isometric to a
geodesic ball in a simply connected space form $
\mathbb{R}^n $, $ \mathbb{H}^n $ or $ \mathbb{S}^n$.

\end{thm}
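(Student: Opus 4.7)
My approach is to use the Einstein hypothesis to reduce the critical metric equation to an Obata--Tashiro type Hessian equation for $\lambda$, and then exploit the rigidity of manifolds admitting such a function to identify $(\Omega, g)$ with a geodesic ball in a space form. Substituting $\Ric(g) = \tfrac{R}{n} g$ into \eqref{critical-inte-s0} gives $\nabla^2_g \lambda = (1 + \Dg \lambda + \tfrac{R\lambda}{n}) g$, so $\nabla^2_g \lambda$ is a scalar multiple of $g$ at every point. Tracing the equation yields $\Dg\lambda = -\tfrac{R\lambda+n}{n-1}$, and substituting back gives
\[
\nabla^2_g \lambda = -\frac{R\lambda + n}{n(n-1)}\, g.
\]
Since the right hand side of \eqref{critical-inte-s0} equals $g$, $\lambda$ cannot be identically zero, and combined with $\lambda|_\Sigma = 0$ this forces $\lambda$ to attain a positive maximum or a negative minimum at some interior point $p\in\Omega$, where $\nabla\lambda(p) = 0$.

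From the Hessian equation one gets $\nabla|\nabla\lambda|^2 = -\tfrac{2(R\lambda+n)}{n(n-1)}\nabla\lambda$, so $|\nabla\lambda|^2$ depends only on $\lambda$. Hence on the open set $\{\nabla\lambda \ne 0\}$ the level hypersurfaces $\Sigma_t = \{\lambda = t\}$ are smooth and totally umbilical with scalar shape operator, and parametrizing by arc length $t$ along the integral curves of $\nabla\lambda/|\nabla\lambda|$ puts the metric into the warped-product form
\[
g = dt^2 + \phi(t)^2\, g_0
\]
for a fixed reference metric $g_0$ on a level set. Imposing $\Ric(g) = \tfrac{R}{n} g$ on this warped product forces both the linear ODE $\phi''(t) + \tfrac{R}{n(n-1)} \phi(t) = 0$ and that $g_0$ is Einstein on the $(n-1)$-dimensional fiber.

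The final step is to close up this warped product at the interior critical point $p$. As one approaches $p$ along a gradient flow line one has $\phi(t) \to 0$, and the smoothness of $g$ at $p$ forces $(\Sigma_0, g_0)$ to be the round unit $(n-1)$-sphere with initial data $\phi(0) = 0$ and $\phi'(0) = 1$. With these data the ODE has a unique solution --- $\phi(t) = t$, $\phi(t) = \sqrt{\tfrac{n(n-1)}{R}}\sin(\sqrt{\tfrac{R}{n(n-1)}}\,t)$, or $\phi(t) = \sqrt{\tfrac{n(n-1)}{-R}}\sinh(\sqrt{\tfrac{-R}{n(n-1)}}\,t)$ according as $R = 0$, $R > 0$, or $R < 0$ --- which is precisely the radial profile of the space-form metric on $\R^n$, $\mathbb{S}^n$, or $\H^n$. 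The boundary $\Sigma$ corresponds to the level set $\{\lambda = 0\}$ and hence to a geodesic sphere of the appropriate radius, so $(\Omega, g)$ is isometric to a geodesic ball in the corresponding simply connected space form.

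The main obstacle I foresee is the rigidity in the third step: arguing that the warped product representation genuinely extends smoothly up to $p$ and that the smoothness of $g$ at $p$ really does pin $(\Sigma_0, g_0)$ down to the round sphere. This amounts to a local form of Tashiro's classification of complete Riemannian manifolds admitting a non-trivial solution of $\nabla^2 f = \varphi(f) g$; one can either invoke such a result directly or perform a careful normal-coordinate expansion of $g$ at $p$ to identify the fiber geometry. A secondary concern is ensuring there is no competing second interior critical value of $\lambda$ that would obstruct a global identification with a single geodesic ball, which should follow because $|\nabla\lambda|$ is a function of $\lambda$ alone and the gradient flow from $p$ connects monotonically to $\{\lambda = 0\} = \Sigma$.
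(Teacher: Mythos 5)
Your reduction to the Hessian equation $\nabla^2_g\lambda = \bigl(-\tfrac{R\lambda+n}{n(n-1)}\bigr)g$ and the existence of an interior critical point $p$ are correct and agree with the paper. But the two steps you yourself flag as concerns are exactly where the proof lives, and as written the argument has a genuine gap in the case $R>0$. When $R>0$ the theorem is stated \emph{without} any eigenvalue hypothesis, so one cannot assume $\lambda>0$ in the interior: the zero set $\{\lambda=0\}$ may have interior components, and hence need not coincide with $\Sigma$. Along a unit-speed gradient trajectory from $p$ one has $\lambda(s)=(\lambda(p)+\tfrac{1}{n-1})\cos s-\tfrac{1}{n-1}$, which crosses zero strictly before reaching its minimum at $s=\pi$; so the region swept out monotonically by the flow down to $\{\lambda=0\}$ is a priori only one nodal domain of $\lambda$, not all of $\Omega$. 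Your closing remark that this ``should follow because $|\nabla\lambda|$ is a function of $\lambda$ alone'' does not address this: the paper has to prove separately that $\nabla\lambda\neq 0$ on the interior zero set $\Lambda_0$, identify a nodal domain $\overline{\Omega}^+_1$ adjacent to a boundary component with a geodesic ball in $\mathbb{S}^n$, and then use a connectedness argument ($\partial\overline{\Omega}^+_1$ is connected, hence equals $\Sigma_1$, hence $\overline{\Omega}^+_1$ is open and closed in $\Omega$) to conclude $\Omega=\overline{\Omega}^+_1$. A related surjectivity issue arises for all signs of $R$: you must show that the warped-product region (equivalently the metric ball $B_{r_0}(p)$ with $r_0=\mathrm{dist}(p,\Sigma)$) exhausts $\Omega$; the paper gets this from the maximum principle applied to $\Delta_g\lambda+\tfrac{R}{n-1}\lambda=-\tfrac{n}{n-1}$, forcing $\partial B_{r_0}(p)\subset\Sigma$.

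The other flagged step --- that the warped product closes up smoothly at $p$ with round-sphere fiber --- is the crux of the Tashiro-type route and is not carried out in your proposal. It is fixable (near the nondegenerate critical point $p$ the level sets of $\lambda$ are exactly geodesic spheres, by the explicit ODE for $\lambda$ along geodesics from $p$, and small geodesic spheres rescale to the round sphere), but it is worth noting that the paper avoids it entirely: instead of identifying the fiber, it computes the mean curvature $H_s=\tfrac{n-1}{s}$ (in the flat case, and the analogous model values otherwise) of the geodesic spheres from the umbilicity formula, integrates $A'(s)=H_sA(s)$ to show $\mathrm{vol}(\Omega,g)$ equals the volume of the model ball, and then invokes the rigidity case of the Bishop volume comparison theorem. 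If you want to keep your warped-product route, you must either supply the normal-coordinate expansion at $p$ pinning down the fiber, or substitute the volume-comparison argument; and in either case you must add the nodal-domain analysis for $R>0$.
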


To understand conformally flat critical metrics,
we first construct explicit examples of critical metrics
which are in the form of warped products.
It is interesting to note that those examples include the usual spatial Schwarzschild metrics and Ads-Schwarzschild metrics restricted to certain domains containing their horizon
 and bounded by two spherically symmetric spheres 
(see Corollary  \ref{Schwarzschild} and  \ref{Ads-Schwarzschild}).
Then we show that any conformally flat, non-Einstein, critical metric is
either one of the warped products we construct
or it is covered by such a metric.
More precisely, we have:

\begin{thm}\label{CF-s0-t2} Let $(\Omega^n,g)$ be a
connected, compact,  conformally flat
manifold with a smooth boundary $\Sigma$.
Suppose the metric $ g $ is a critical metric and
 the first Dirichlet eigenvalue
of $(n-1)\Dg+R$ is nonnegative,
where $ R $ is the scalar curvature of $ g $.
 \begin{enumerate}
\item[(i)]
If $\Sigma$ is disconnected, then $ \Sigma $ has
exactly two  connected components, and
$(\Omega,g)$ is isometric to  $ (I\times N, ds^2+r^2 h)$
where $I$ is a finite
interval in $\R^1$ containing the origin $ 0$,
$(N,h)$ is a closed manifold with
constant sectional curvature $\kappa_0$,
$r $ is a positive function on $ I $ satisfying
$ r^\prime (0) = 0 $ and
$$
  r^{\prime \prime} + \frac{ R}{n ( n -1) } r = a r^{1-n}
$$
for some constant $ a>0 $, and the constant $ \kappa_0
$ satisfies
$$   (r^\prime)^2 + \frac{R}{n(n-1)} r^2  +
\frac{ 2 a}{ n-2} r^{2-n} = \kappa_0  .
$$
\item[(ii)]
If $\Sigma$ is connected, then
$(\Omega, g)$ is either isometric to a
geodesic ball in a simply connected space form $
\mathbb{R}^n $, $ \mathbb{H}^n $, $ \mathbb{S}^n$,
or $(\Omega,g)$ is covered
by one of the above mentioned warped products in  (i)  with
a covering group $\mathbb{Z}_2$.
\end{enumerate}
\end{thm}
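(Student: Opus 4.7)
My plan is to derive a warped product structure on $(\Omega,g)$ by combining the critical equation with conformal flatness, and then carry out a global case analysis based on the boundary topology. Tracing the critical equation and using that $R$ is constant gives $(n-1)\Dg\lambda+R\lambda+n=0$, whence $\nabla(\Dg\lambda)=-\frac{R}{n-1}\nabla\lambda$. Differentiating the critical equation $\nabla_i\nabla_j\lambda=\lambda R_{ij}+(\Dg\lambda+1)g_{ij}$ once, anti-symmetrizing in the first two indices, and applying the Ricci identity $[\nabla_k,\nabla_i]\nabla_j\lambda=-R_{kij}{}^m\nabla_m\lambda$ produces a 3-tensor identity. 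Into this I would substitute the vanishing of the Cotton tensor $\nabla_iR_{jk}-\nabla_jR_{ik}$ (which follows from conformal flatness together with constant $R$) and the conformally flat expansion of the Riemann tensor in terms of Ricci. Contracting with $\nabla\lambda$ and iterating to solve for $\Ric(g)(\nabla\lambda,\cdot)$ forces, on $U:=\{\nabla\lambda\neq 0\}$, $\nabla\lambda$ to be an eigenvector of $\Ric(g)$ while $\Ric(g)|_{(\nabla\lambda)^\perp}$ is a scalar multiple of the induced metric. The critical equation then transfers the same eigenstructure to $\nabla^2_g\lambda$, so the regular level sets of $\lambda$ are totally umbilical, and by a classical de Rham-type argument $g$ splits locally on $U$ as a warped product $g=ds^2+r(s)^2 h$ with $\lambda=\lambda(s)$.

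Next, rewriting the critical equation in this warped product form and using that $R$ is constant yields, after one integration, the claimed ODE
\[
r''+\tfrac{R}{n(n-1)}r=a\,r^{1-n}
\]
and the first integral
\[
(r')^2+\tfrac{R}{n(n-1)}r^2+\tfrac{2a}{n-2}r^{2-n}=\kappa_0,
\]
where $\kappa_0$ is the ratio of the tangential Ricci eigenvalue of $(N,h)$ to $h$. A standard computation of the Weyl tensor (for $n\geq 4$) or of the Cotton tensor (for $n=3$) of a warped product, together with conformal flatness of $(\Omega,g)$, then forces $(N,h)$ to have constant sectional curvature exactly $\kappa_0$.

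Finally I would handle the global structure. The nonnegativity of the first Dirichlet eigenvalue of $(n-1)\Dg+R$, combined with a Hopf-type boundary argument applied to $(n-1)\Dg\lambda+R\lambda=-n$ with $\lambda|_\Sigma=0$, yields $\nabla\lambda\neq 0$ on $\Sigma$; hence each component of $\Sigma$ is a regular level set of $\lambda$ and the warped product structure extends up to $\Sigma$. In the interior, the warped product parametrization can fail only where $r\to 0$ (a pole) or $r'=0$ with $r>0$ (a neck); a pole requires $(N,h)$ to be a round sphere and the local geometry to be a space form, while a neck admits a smooth local extension. In case (i), a disconnected $\Sigma$ excludes poles, so the warped product extends globally across a single neck; reparametrizing to place this neck at $s=0$ gives the stated form, and $a>0$ follows from the first integral evaluated at the neck together with $r(0)>0$ and the sign of $r''(0)$ coming from $r$ having a local minimum. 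In case (ii), either the warped product closes at a single interior pole and Theorem \ref{Einstein-s0-t1} identifies $(\Omega,g)$ as a geodesic ball in a space form, or it terminates at a neck, and reflecting across the neck constructs a $\mathbb{Z}_2$-double cover $\tilde\Omega$ of type (i). I expect the main obstacle to be this last global synthesis: verifying that the local reflection at an interior neck assembles into a globally defined, fixed-point-free isometric involution on $\tilde\Omega$ whose quotient is exactly $(\Omega,g)$, and ruling out any more complicated arrangement of poles and necks along the gradient flow of $\lambda$.
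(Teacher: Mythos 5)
Your local analysis coincides with the paper's: the vanishing of the Cotton/Schouten obstruction combined with the critical equation gives that the regular level sets of $\lambda$ have constant $|\nabla\lambda|$, are totally umbilical with constant mean curvature and have constant sectional curvature, hence a local warped product $ds^2+r^2h$ with $\lambda=\lambda(s)$, the ODE for $r$ and its first integral (Lemmas \ref{CF-l1}, \ref{CF-l2} and Proposition \ref{prop-1-s2}). But the global synthesis, which you explicitly defer as ``the main obstacle,'' is the bulk of the paper's proof, and your sketch of it contains both an error and a genuine gap. The error: the warped-product parametrization built from the gradient flow of $\lambda$ degenerates where $\nabla\lambda=0$, i.e.\ where $\lambda'(s)=0$, not where $r'(s)=0$; for a solution $\lambda=\lambda_0+Cr'$ with $C\neq0$ one has $\lambda'(0)=Cr''(0)\neq0$ at the critical point of $r$, so the two loci are different in general and your pole/neck dichotomy is keyed to the wrong function. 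Relatedly, your derivation of $a>0$ from ``$r$ having a local minimum at the neck'' fails when $R\le 0$: the ODE at such a point only yields $a\ge \frac{R}{n(n-1)}r(0)^n$, which is nonpositive there. The paper instead proves $a\ge 0$ outright (Lemma \ref{CF-l3}(i)) by a maximum-principle argument exploiting $\lambda>0$ in the interior and, when $R<0$, the bound $\lambda<\frac1{n-1}$, and identifies $a=0$ with the Einstein case handled by Theorem \ref{Einstein-t1}.

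The gap: you never establish that the closure of a warped-product region meets the critical set $S=\{\nabla\lambda=0\}$ in either a single point or a connected \emph{embedded} hypersurface, nor that the union of the (at most two) warped-product regions together with $S$ exhausts $\Omega$. The paper needs Lemma \ref{CF-l3}(iii) for the first point --- and this in turn requires the graphical estimates of the Appendix to show the limit set is an embedded hypersurface that locally coincides with all of $S$ --- and Lemma \ref{CF-l4} for the second, via a strong-maximum-principle argument on $U=\Omega\setminus\bigcup_i\overline{I_i\times\Sigma_i}$ using $(n-1)\Delta_g\lambda+R\lambda=-n$ to force $U=\emptyset$; this is also exactly where the bound of two boundary components comes from. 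Without these steps one cannot rule out three or more boundary components, open regions where $\nabla\lambda\equiv0$, or other arrangements of the critical set, and the reflection/double-cover construction you outline for case (ii) has nothing well-defined to act on. So the proposal reproduces the local structure theory correctly but does not yet prove the theorem.
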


It follows from Theorem \ref{CF-s0-t2} that
if $ g $ is a conformally flat critical metric on a simply connected
manifold $ \Omega $ such that the boundary of 
$(\Omega, g)$ is isometric to a  standard round sphere,
then $(\Omega,g)$ is isometric to a geodesic
ball in $\mathbb{R}^n $, $ \mathbb{H}^n $ or $ \mathbb{S}^n$.

The organization of the paper is as follows.
In section 2, we consider critical metrics which
are Einstein. We prove that compact manifolds with  critical Einstein metrics are geodesic balls in simply connected space forms.
In section 3, we construct  critical metrics which
can be written as a warped product or the quotient of a
warped product.  In particular, we
obtain non-Einstein  critical metrics whose boundary
is a standard round sphere and   examples
of critical metrics whose boundary is disconnected.
In section 4, we classify all conformally flat critical metrics.
We prove that they are exactly the metrics constructed in section 2.
For completeness and easy reference, we include an appendix on  estimates
of graphical representation  of hypersurfaces with bounded
second fundamental form, which is needed in Section 4.
  All manifolds considered in this paper are assumed to be connected with dimension  $ n\geq 3 $.

\section{critical Einstein metrics}

Let $(M,g)$ be an Einstein manifold with or without boundary. 
We normalize $ g $ so that 
$ \Ric(g) = ( n -1) \kappa g $, 
where $ \kappa = 0$, $ 1$, or $ -1$.
Suppose there is a non-constant  function $ \l $ on $ M $ satisfying
\begin{equation}\label{critical-inte-s1}
  -(\Delta_g\lambda)g+\nabla^2_g\lambda-\lambda \Ric(g)  =g .
\end{equation} 
We will prove in Theorem \ref{Einstein-t1} 
that, if $ M $ is connected, compact with nonempty  
boundary on which $ \l $ is zero, then $ (M, g)$
is isometric to a geodesic ball in 
$ \mathbb{R}^n $, $ \mathbb{H}^n$ or $ \mathbb{S}^n$. 
In Theorem \ref{Einstein-t2}, we will  also classify  those $(M, g)$ 
that are  complete without boundary. 

We note that all geodesics in this section are assumed
to be parametrized by arc-length.  

\begin{lma}\label{Einstein-l1} 
Let $ (M, g) $ and $ \l $ be given as above. Suppose
 there exists  $ p \in M $ such that $ \nabla \l (p) = 0 $.
Then the followings are true:

\begin{enumerate}
\item[(i)] Along a geodesic $\alpha(s)$ emanating from $p$, we have:
    \begin{itemize}

\item [(a)] if  $ \kappa = 0 $, then
$$ \lambda(\alpha(s)) = -\frac{1}{2 ( n -1 )} s^2 + \l(p); $$
\item[(b)] $ \kappa = 1$, then
$$\lambda(\alpha(s))=\lf(\l(p)+\frac1{n-1}\ri) \cos  s - \frac{1}{ ( n-1) }.  $$
\item[(c)] $ \kappa =-1 $, then
$$ \lambda(\alpha(s))= \lf(\l(p)-\frac1{n-1}\ri) \cosh s + \frac{1}{ ( n-1) } . $$

\end{itemize}

   \item[(ii)]   Suppose  $  q \in M $ such that 
   there exists a minimizing geodesic $ \alpha (s) $ 
 connecting $ p $ to $ q $.
 If $ \beta (s) $ is another geodesic connecting 
 $ p $ to $ q $ and $ \beta (s) $  has length no
  greater than $ \pi $ if $ \kappa = 1$, 
  then $ \beta (s) $ is also minimizing.
  \end{enumerate}
\end{lma}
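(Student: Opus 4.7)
The plan is to replace the tensor equation (\ref{critical-inte-s1}) with a scalar ODE along geodesics, and then to read off parts (i) and (ii) from the explicit solution. First I would take the trace of (\ref{critical-inte-s1}): using $\Ric(g)=(n-1)\kappa g$ (so that the scalar curvature is $n(n-1)\kappa$), this gives
$$
\Delta_g\lambda=-\frac{n}{n-1}-n\kappa\lambda.
$$
Substituting this expression for $\Delta_g\lambda$ back into (\ref{critical-inte-s1}) to cancel the $(\Delta_g\lambda)g$ term yields the clean Hessian identity
$$
\nabla^2_g\lambda=-\Big(\tfrac{1}{n-1}+\kappa\lambda\Big)g.
$$
Pulling this identity back along a unit-speed geodesic $\alpha(s)$ emanating from $p$, and using the hypothesis $\nabla\lambda(p)=0$ to obtain $(\lambda\circ\alpha)'(0)=0$, I am left with the scalar initial value problem
$$
(\lambda\circ\alpha)''(s)+\kappa\,(\lambda\circ\alpha)(s)=-\tfrac{1}{n-1},\qquad (\lambda\circ\alpha)(0)=\lambda(p),\qquad (\lambda\circ\alpha)'(0)=0.
$$
Elementary integration in the three cases $\kappa=0,1,-1$ then produces the three explicit formulas of part (i).

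For part (ii) the crucial observation is that the solution above depends only on the arc-length $s$ and on the value $\lambda(p)$; it is insensitive to the starting direction $\alpha'(0)$. Denote the common solution by $f(s)$. Since $\alpha$ and $\beta$ both run from $p$ to $q$, the formula applies to both, giving
$$
f(L(\alpha))=\lambda(q)=f(L(\beta)),
$$
where $L(\cdot)$ denotes length. If $f$ is strictly monotone on an interval containing $L(\alpha)$ and $L(\beta)$, then $L(\beta)=L(\alpha)=d(p,q)$, and $\beta$ is minimizing. For $\kappa=0$ the formula is a downward-opening parabola in $s$, strictly monotone on $s\ge 0$. For $\kappa=1$ and $\kappa=-1$, strict monotonicity hinges on the leading coefficient $\lambda(p)\pm\tfrac{1}{n-1}$ being nonzero.

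To rule out the degenerate case, suppose the coefficient vanished. Then $\lambda$ would equal the constant $\mp\tfrac{1}{n-1}$ along every geodesic from $p$, and since $\exp_p$ is a local diffeomorphism at the origin, $\lambda$ would be constant on a normal neighborhood of $p$. At any interior point $p'$ of that neighborhood one has $\nabla\lambda(p')=0$, so the same argument propagates: the set on which $\lambda$ equals the constant $\mp\tfrac{1}{n-1}$ is both open and closed, and connectedness of $M$ forces $\lambda$ to be globally constant, contradicting the standing assumption that $\lambda$ is non-constant. With the coefficient guaranteed nonzero, $f$ is strictly monotone on $[0,\infty)$ when $\kappa=-1$, and strictly monotone on the half-period $[0,\pi]$ when $\kappa=1$. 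The only delicate point in the whole argument is this last case: one needs the hypothesis $L(\beta)\le\pi$ precisely to keep both lengths inside the monotonicity window of cosine, which explains why the bound appears in the statement.
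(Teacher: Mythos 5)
Your proposal is correct and follows essentially the same route as the paper: reduce \eqref{critical-inte-s1} to the Hessian identity $\nabla^2_g\lambda=(-\kappa\lambda-\tfrac{1}{n-1})g$, integrate the resulting ODE along geodesics for (i), and use strict monotonicity of the explicit solution (with the leading coefficient nonzero by non-constancy of $\lambda$) for (ii). The one clause worth making explicit is that in the case $\kappa=1$ the bound $L(\alpha)\le\pi$ for the \emph{minimizing} geodesic comes from Myers' theorem via $\Ric(g)=(n-1)g$, not from the hypothesis, which only controls $L(\beta)$.
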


\begin{proof}  As $ \Ric(g) = ( n -1 ) \kappa g $, 
\eqref{critical-inte-s1} is equivalent to 
\be \label{equi-eq-s1}
\nabla^2_g \l = \lf( - \kappa \l - \frac{1}{n-1} \ri) g .
\ee
Hence, $ \l $ satisfies 
\be 
\frac{d^2}{ds^2}\lambda(\alpha(s))=-\kappa \l ( \alpha (s) ) -\frac1{n-1},
\ee
along $ \alpha(s) $. From this and the fact  $ \nabla \l (p) = 0 $, 
(i) of 
the Lemma follows. 

To prove (ii), let $ r $ and $ l $ be the length of $ \alpha(s) $ 
and $ \beta(s)$. By (i) and the fact $ \alpha (r) = q = \beta(l)$, 
we have:
$$
-\frac{1}{2(n-1)} r^2 + \l(p)= -\frac{1}{2(n-1)} l^2 + l(p)
$$
if $\kappa=0$;
$$
\lf(\l(p)+\frac1{n-1}\ri) \cos  r - \frac{1}{ ( n-1) } =\lf(\l(p)+\frac1{n-1}\ri) \cos  l - \frac{1}{ ( n-1) }.
$$
if $\kappa=1$; and
$$
\lf(\l(p)-\frac1{n-1}\ri) \cosh r + \frac{1}{ ( n-1) }= \lf(\l(p)-\frac1{n-1}\ri) \cosh l + \frac{1}{ ( n-1) }
$$
if $\kappa=-1$. Since $ \l $ is not identically a constant,
we have $ \l (p) + \frac{1}{n-1} \neq 0 $ if $ \kappa = 1 $
and $ \l (p) - \frac{1}{n-1} \neq 0 $ if $ \kappa = - 1 $. 
 In case $\kappa=0$ or $-1$, it is then evident that 
 $r=l$. In case $\kappa=1$, we have 
$ \Ric(g) = ( n -1) g $, which implies  $ r \leq \pi $
as $ \alpha (s) $ is minimizing.
Since $ l \leq \pi$ by assumption, we have $ r = l $. 
This shows that $ \beta (s) $ is also minimizing. 
 
\end{proof}

\begin{lma}\label{Einstein-l2}
Let $(M, g)$ and $ \l $ be given as above. 
 Suppose $\Sigma \subset M $ is a connected, embedded  hypersurface 
 on which  $ \l $ equals a constant. 
 Suppose  $ \nabla \l  $ never vanishes on $ \Sigma $ 
 and let $ \nu = \nabla \l/|\nabla\l|$. Then $ | \nabla \l | $ is 
constant on $ \Sigma $ and
 the second fundamental
form $A( X, Y)$  of $ \Sigma $ 
with respect to $ \nu $
satisfies
       \begin{equation}\label{Einstein-2ndfund-e1}
       A (X,Y)= |\nabla\l|^{-1}\lf(-\kappa\l-\frac1{n-1}\ri)g(X,Y) ,
       \end{equation}
      where  $X, Y$ are any tangent vectors to $\Sigma.$

\end{lma}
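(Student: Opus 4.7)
The plan is to exploit equation \eqref{equi-eq-s1}, which in the Einstein setting gives a pointwise expression
$$ \nabla^2_g \l = \lf( - \kappa \l - \frac{1}{n-1} \ri) g $$
for the Hessian of $\l$. Since $\l$ is constant on $\Sigma$ and $\nabla \l$ does not vanish there, $\nabla \l$ is a normal vector field along $\Sigma$, so $\nu = \nabla\l/|\nabla\l|$ is a well-defined unit normal.

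First I would verify that $|\nabla \l|$ is constant on $\Sigma$. For any vector $X$ tangent to $\Sigma$,
$$ X \bigl( |\nabla \l |^2 \bigr) = 2 \, g(\nabla_X \nabla \l, \nabla \l ) = 2 \, \nabla^2 \l (X, \nabla \l), $$
and by the displayed identity above this equals $2 \lf( - \kappa \l - \frac{1}{n-1} \ri) g(X, \nabla \l)$. Since $\nabla \l$ is normal to $\Sigma$ and $X$ is tangent, $g(X, \nabla \l ) = 0$, so $|\nabla \l |^2$ is locally constant, hence constant on the connected hypersurface $\Sigma$.

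Next I would obtain \eqref{Einstein-2ndfund-e1} by computing $\nabla^2 \l (X,Y)$ in two ways for tangent vectors $X,Y$ to $\Sigma$. On one hand, the Einstein-reduced critical equation gives $\nabla^2 \l (X,Y) = \lf( -\kappa \l - \frac{1}{n-1}\ri) g(X,Y)$. On the other hand, since $Y\l$ vanishes along $\Sigma$, we have $X(Y\l) = 0$ at points of $\Sigma$, and so
$$ \nabla^2 \l (X,Y) = X(Y\l) - (\nabla_X Y) \l = - (\nabla_X Y)\l. $$
Decomposing $\nabla_X Y$ into tangential and normal components, only the normal component contributes to $(\nabla_X Y)\l$ because $\l$ is constant along $\Sigma$. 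With the convention $A(X,Y) = g(\nabla_X \nu, Y)$, the normal part of $\nabla_X Y$ is $-A(X,Y)\nu$, giving $(\nabla_X Y)\l = -A(X,Y) \, \nu(\l) = -A(X,Y)|\nabla \l|$. Therefore $\nabla^2\l(X,Y) = A(X,Y)|\nabla\l|$, and equating with the first expression yields \eqref{Einstein-2ndfund-e1}.

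The argument is essentially a direct unpacking of the Einstein-reduced equation along $\Sigma$; the only point requiring care is maintaining a consistent sign convention for the second fundamental form, which determines whether the factor in \eqref{Einstein-2ndfund-e1} appears with its stated sign.
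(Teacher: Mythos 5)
Your proof is correct and follows essentially the same route as the paper: both arguments reduce to the observation that \eqref{equi-eq-s1} forces $\nabla^2_g\l(X,\nu)=0$ for tangent $X$ (giving constancy of $|\nabla\l|$) and that $A(X,Y)=|\nabla\l|^{-1}\nabla^2_g\l(X,Y)$ on the level set. The only cosmetic difference is that you derive the latter identity by differentiating the tangent field $Y$ and isolating the normal component of $\nabla_XY$, whereas the paper differentiates $\nu$ directly; these are dual forms of the same computation.
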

\begin{proof}
Using  the fact that $ \l $ equals a constant on $ \Sigma $, we have
\be \label{nablal}
\begin{split}
\frac{1}{2} X ( | \nabla \l |^2 ) & = \la \nabla_X ( \nabla \l ), \nabla \l \ra \\
& = | \nabla \l | \la  \nabla_X ( \nabla \l ) , \nu \ra \\
& = | \nabla \l | \nabla^2_g ( \l ) ( X, \nu)
\end{split}
\ee
and
\begin{equation} \label{IIofSigma}
    \begin{split}
            A (X,Y) &=  \la \nabla_X \nu, Y \ra \\
            & = | \nabla \l |^{-1}  \la \nabla_X ( \nabla \l ),  Y \ra \\
              &=|\nabla\l|^{-1}\nabla^2_g(\l)(X,Y).
          \end{split}
\end{equation}
From \eqref{equi-eq-s1}, \eqref{nablal}
and \eqref{IIofSigma}, we conclude that $ X (  | \nabla \l |^2 )= 0 $ 
and \eqref{Einstein-2ndfund-e1} holds. 
\end{proof}

\begin{thm}\label{Einstein-t1} Suppose $ (\Omega, g )$ is a 
connected, compact,  Einstein manifold with a smooth boundary $\Sigma$.  Suppose there is a function $ \lambda $ on
$ {\Omega} $ such that $ \l = 0 $ on $ \Sigma $ and
\begin{equation}\label{critical-inte2-s1}
  -(\Delta_g\lambda)g+\nabla^2_g\lambda-\lambda \Ric(g)  =g 
\end{equation} 
in $ \Omega $.
Then $ (\Omega^n,g) $ is isometric to a geodesic ball in
a simply connected space form $ \mathbb{R}^n $, $ \mathbb{H}^n $
or $ \mathbb{S}^n$.

\end{thm}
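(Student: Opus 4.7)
The plan is to locate an interior critical point $p$ of $\lambda$, use Lemmas \ref{Einstein-l1} and \ref{Einstein-l2} to deduce that $g$ is a space form in a normal neighborhood of $p$, extend this globally via the real-analyticity of Einstein metrics, and finally identify $(\Omega,g)$ with a geodesic ball via the exponential map at $p$.

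I begin by noting that $\lambda$ must be non-constant: were $\lambda\equiv 0$, the critical equation \eqref{critical-inte2-s1} would give $g=0$. Since $\lambda|_\Sigma=0$ and $\Omega$ is compact, $\lambda$ attains an extremum at some interior point $p$ with $\nabla\lambda(p)=0$. Lemma \ref{Einstein-l1}(i) expresses $\lambda(\alpha(s))$ along any geodesic $\alpha$ from $p$ as an explicit function of arc length, and solving $\lambda(\alpha(s))=0$ yields a unique positive $r_0$, independent of the initial direction. A brief case analysis shows $r_0<\pi$ when $\kappa=1$: the borderline case $r_0=\pi$ would correspond to the sphere of radius $\pi$ in $\mathbb{S}^n$ degenerating to a single point, which is incompatible with $\Sigma$ being a smooth hypersurface. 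Extending each radial geodesic from $p$ in the interior until it first meets $\Sigma$ shows that $\exp_p$ is defined on $B(0,r_0)\subset T_p\Omega$, maps this ball into $\Omega\setminus\Sigma$, and sends $\partial B(0,r_0)$ onto $\Sigma$.

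Next I turn to local curvature. By Lemma \ref{Einstein-l2}, each regular level set of $\lambda$ is totally umbilic, so every geodesic sphere around $p$ of radius $r\in(0,r_0)$ is umbilic with constant principal curvature. This umbilic foliation forces $g$ to take the warped-product form $g=dr^2+\phi(r)^2 h_0$ in $\exp_p(B(0,r_0))$, where smoothness of $g$ at $p$ identifies $h_0$ with the round unit sphere metric on $T_p\Omega$ and imposes $\phi(0)=0$, $\phi'(0)=1$. Imposing $\Ric(g)=(n-1)\kappa g$ on this warped product reduces to the ODE $\phi''+\kappa\phi=0$, whose solution with those initial data is $\phi(r)=r$, $\sin r$, or $\sinh r$ according as $\kappa=0,1,-1$. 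Hence $g$ coincides near $p$ with the constant curvature $\kappa$ metric. Because Einstein metrics are real-analytic in harmonic coordinates, vanishing of the Weyl tensor on this open neighborhood propagates to all of $\Omega$; combined with the Einstein condition this gives $(\Omega,g)$ constant sectional curvature $\kappa$ throughout.

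To finish, I verify that $\exp_p:\bar B(0,r_0)\to\Omega$ is an isometry onto a closed geodesic ball in the simply connected space form $M_\kappa$. Since $r_0<\pi$, $\exp_p$ remains a local diffeomorphism on $\bar B(0,r_0)$; its image is open (local diffeomorphism) and closed (compactness) in the connected manifold $\Omega$, hence equals $\Omega$. Injectivity follows from a developing map argument into $M_\kappa$: two distinct $v_1,v_2\in B(0,r_0)$ with $\exp_p(v_1)=\exp_p(v_2)$ would produce two distinct geodesics in $M_\kappa$ from a common basepoint to a common endpoint at distance strictly less than $\pi$, contradicting the uniqueness of such geodesics within the injectivity radius. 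The principal obstacle is the warped-product/Einstein reduction in the middle paragraph: one must carefully verify that umbilicity of all geodesic spheres around $p$, together with smoothness at $p$ and the Einstein condition, really forces $\phi$ and $h_0$ to be of the form prescribed by a space form, after which the global conclusion follows routinely from analyticity and the developing map.
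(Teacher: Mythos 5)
Your outline follows the paper up to the point where you assert that $\exp_p$ sends $\partial B(0,r_0)$ onto $\Sigma$ and that the image of $\bar B(0,r_0)$ is open and closed in $\Omega$. Both assertions rest on the unproved claim that $\lambda$ has no interior zeros, so that the first zero of $\lambda(\alpha(s))$ along each radial geodesic from $p$ is attained exactly on $\Sigma$. For $\kappa=0$ and $\kappa=-1$ this is true but needs an argument: the paper gets it from the maximum principle applied to $\Delta_g\lambda<0$ (resp.\ $\Delta_g\lambda-n\lambda<0$), which forces $\lambda>0$ in the interior and hence $\partial B_{r_0}(p)\subset\Sigma$. For $\kappa=1$ it is simply not available a priori: $\lambda$ may vanish along an interior hypersurface $\Lambda_0$, in which case a radial geodesic from your extremum $p$ reaches $\lambda=0$ at an interior point, $\exp_p(\partial B(0,r_0))\not\subset\Sigma$, and the image of $\bar B(0,r_0)$ fails to be open there, so the open-plus-closed argument collapses. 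This is exactly the case to which the paper devotes the second half of its proof: it shows $\nabla\lambda\neq 0$ on $\Lambda_0$, passes to a connected component $\overline{\Omega}^+_1$ of $\{\lambda>0\}$ adjacent to a boundary component, proves that component is a geodesic ball in $\mathbb{S}^n$, and uses connectedness of its boundary to conclude $\Omega=\overline{\Omega}^+_1$. Nothing in your proposal substitutes for this. Relatedly, your claim that $\lambda(\alpha(s))=0$ has a unique positive root independent of direction requires controlling the sign of $\lambda(p)$ (for $\kappa=1$ and $\lambda(p)<0$ the function $(\lambda(p)+\tfrac{1}{n-1})\cos s-\tfrac{1}{n-1}$ may have no root at all), and your justification that $r_0<\pi$ presupposes the round structure you are trying to establish; the correct bound comes from $\Ric(g)=(n-1)g$ via Myers, as in the paper.

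The remaining ingredients are workable but ordered badly. Your warped-product reduction needs the geodesic spheres $\Sigma_s$ to be embedded for $s<r_0$, i.e.\ injectivity of $\exp_p$ on $B(0,s)$, which you only address at the very end via the developing map; the paper secures this first, from $r_0=\mathrm{dist}(p,\Sigma)$ together with Lemma \ref{Einstein-l1}(ii), before any curvature computation. Once $\Omega=B_{r_0}(p)$ with all radial geodesics minimizing is in hand, your route (umbilic spheres, warped product, analytic continuation) and the paper's (compute $H_s=\tfrac{n-1}{s}$ and the volume, then invoke Bishop's rigidity) both finish the job, with the paper's being shorter because Bishop's theorem packages the rigidity. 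But as written, the proposal's positive-curvature case and its surjectivity argument have genuine holes.
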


\begin{proof}
We normalize $ g $ such that $ \Ric( g) = ( n -1 ) \kappa g $, 
where $ \kappa = 0$, $ 1$, or $ -1$.
Since $\l=0$ on $\Sigma$ and $\l$ is not identical zero, 
there exists an interior point $p\in {\Omega}$ such that $\nabla\l(p)=0$.
Let $ r_0 = $ dist$ (p, \Sigma)$, the distance from $ p $ to $\Sigma $. 
Consider the geodesic ball $ B_{r_0}(p) \subset {\Omega}$
centered at $ p $ with radius $ r_0 $. 
Then $ \p B_{r_0} (p) \cap \Sigma \neq \emptyset $.
By Lemma \ref{Einstein-l1}, we 
have $ \l = 0 $ on $ \p B_{r_0} (p)$. 

Suppose $ \kappa = 0 $. Then \eqref{equi-eq-s1} 
implies $  \Dg \l  < 0$. By the maximum principle,
we must have    $ \p B_{r_0}(p) \subset \Sigma $.
As $ \Omega $ is connected, we have
$ B_{r_0}(p) = {\Omega} $.
Furthermore, the fact $ r_0 = $ dist$( p, \Sigma)$ implies 
  every  geodesic $ \alpha(s) $ emanating from $ p $ 
 is minimizing on $ [0, r_0] $  and every $ q \in \Sigma $ can be
  connected to  $ p $ by a unique minimizing geodesic with
 length $ r_0 $. It follows that 
  the exponential map at $ p $ is a diffeomorphism 
 onto $ B_{r_0} (p) = {\Omega} $.
% and $ \Sigma = \p \Omega $ is the embedded geodesic sphere centered at $ p $ of radius $r_0$.  
For each $ s \in (0, r_0] $,  let  $ \Sigma_s  $ be the embedded geodesic sphere centered 
 at $ p $ of radius $ s $. By Lemma \ref{Einstein-l1}, 
 $ \l = - \frac{1}{2(n-1)} s^2 + \l (p) $ on $  \Sigma_s $. 
 In particular, $ \nabla \l $ does not vanish on $ \Sigma_s $. 
 Let $ H_s $ be the mean curvature of $ \Sigma_s $ 
 w.r.t the outward unit normal. 
 By Lemma \ref{Einstein-l2}, we have 
 $ H_s = \frac{n-1}{s} $. Let $ A(s) $ be the areas of $ \Sigma_s $.
 Then $ \frac{d}{ds} A(s) = \frac{n-1}{s} A(s) $. From this it follows 
 that the volume of $ (\Omega, g) $ 
 agrees with  the volume of a geodesic ball of radius $ r_0 $ in 
 $ \mathbb{R}^n $. Since $ \Ric (g) = 0 $,  by the Bishop volume comparison theorem \cite{BishopCrittenden}, we conclude that
 $ (\Omega, g) $  is isometric to a geodesic ball in $\R^n$.

Suppose  $\kappa=-1$, then  \eqref{equi-eq-s1} implies 
$\Dg\l-n\l<0$. The maximum principle can still be applied 
to show $ \p B_{r_0}(p) \subset \Sigma $. Hence
we can prove similarly  that  $ (\Omega, g)$ is 
isometric to a geodesic ball  in $ \mathbb{H}^n $.

Finally, suppose $\kappa=1$. Since $ \Ric(g) = ( n -1) g $,
we have $ r_0 \leq \pi $. In particular, the function
$ f(s) = ( \l(p) + \frac{1}{n-1} ) \cos{s} - \frac{1}{n-1} $
has nowhere vanishing derivative on $(0, r_0]$. 
If  $ \l $ never vanishes  in the interior of ${\Omega}$,
 we can proceed as before to show that 
$ (\Omega, g)$ is isometric to a geodesic ball in $ \mathbb{S}^n $. 
In general, let $ \Lambda_0 $ be the set of interior
points where $  \l $ vanishes. 
%=  \{ q  \in \mathring{\Omega} \ | \ 
%\l (q) = 0 \} $ and suppose $ \Lambda_0 \neq \emptyset$.
%We claim that $ \nabla \l (q ) \neq 0 $ for any $ q \in \Lambda_0$.
Suppose $ \nabla \l ( q ) = 0 $ for some $ q \in \Lambda_0$.
Let $ d  = $ dist$( q, \Sigma)$ and let $ \beta(s) $ be 
a geodesic  such that $ \beta (0) = q $
and $ \beta (d ) \in \Sigma $. By Lemma \ref{Einstein-l1}
and the fact $ \l  ( q ) = 0 $,
we have
$ \l ( \beta (s) ) =  \frac{1}{n-1} \cos{s} - \frac{1}{n-1} $.
At $ s = d$, we have $ \l ( \beta ( d ) ) = 0 $, hence 
$ \cos{ d } = 1 $. On the other hand, the fact 
  $ \Ric(g) = ( n-1) g $ implies $ d  \leq \pi $, which is  a contradiction. Therefore, $ \nabla \l $  never vanishes at points in $ \Lambda_0$.
In particular,  $ \Lambda_0 $ is an embedded 
hypersurface in $ {\Omega} $.

 Let $\Sigma_1$ be a connected component of $\Sigma$.
 At $ \Sigma_1 $, we have   $ \nabla^2_g \l = - \frac{1}{n-1} g $ by \eqref{equi-eq-s1}.  As mentioned in \cite{MiaoTam08},  this implies that the  mean curvature $H$ of 
$ \Sigma_1 $ (w.r.t the outward unit normal $ \nu $)
 satisfies $ H\frac{\p \l}{\p \nu}=-1.$  In particular, $\frac{\p \l}{\p \nu}$ never vanishes on $\Sigma_1$. 
 Suppose $ \frac{\p \l}{\p \nu} < 0 $ on $ \Sigma_1 $. 
 Since $\l=0$ on $\Sigma_1$,  there exists a connected open set $ U_1 $  in $ \Omega $ containing $ \Sigma_1 $ such that   $ \l > 0 $  on $ U_1 \setminus \Sigma_1 $. 
  Consider the open set 
  $ \Omega^+ = \{ q \in {\Omega} \ | \ \l ( q) > 0 \} $.  
  Let $ \Omega^+_1 $ be the connected component of 
  $ \Omega^+ $ containing $ U_1   \setminus \Sigma_1 $. 
  Let $ \ol{\Omega}^+_1$ be the closure of $ \Omega^+_1 $
  in $ \Omega $. Then $ \ol{\Omega}^+_1 $ is a compact manifold
  with smooth nonempty boundary $ \p  \ol{\Omega}^+_1 $,
    moreover $ \l > 0 $ in $ \Omega^+ $ and 
  $ \l = 0 $ on $ \p  \ol{\Omega}^+_1$. 
  Replacing $ \Omega $ by $ \ol{\Omega}^+_1 $, 
  we can prove as before that $( \ol{\Omega}^+_1, g) $ is 
  isometric to a geodesic ball in $ \mathbb{S}^n $. 
  In particular, $ \p \ol{\Omega}^+_1$ is connected. 
  Since $ \Sigma_1 \subset  \p \ol{\Omega}^+_1$, we 
  must have $ \Sigma_1 = \p \ol{\Omega}^+_1$. 
  Consequently, $ \ol{\Omega}^+_1 $ is  an open set 
  in $ \Omega$. Since $ \Omega $ is connected, we conclude 
  that $ \Omega =  \ol{\Omega}^+_1 $ and
 $ (\Omega, g)$ is   isometric to a geodesic ball 
 in $ \mathbb{S}^n $. 
  The case  $ \frac{\p \l}{\p \nu} > 0 $ on $ \Sigma_1 $
  can be proved similarly by considering 
  $ \Omega^- = \{ q \in {\Omega} \ | \ \l ( q) < 0 \} $.   
\end{proof}

Next we  consider complete Einstein manifolds 
$(M, g)$ that admit a non-constant solution $ \l $ 
to  \eqref{critical-inte-s1}.

\begin{thm}\label{Einstein-t2} Let $(M^n,g)$ be a connected, complete manifold without boundary. Suppose $g$ is Einstein with $\Ric(g)=(n-1)\kappa g$ where $\kappa=0, 1$  or $-1$. Suppose there exists a non-constant solution $ \l $ to the equation
 \be \label{critical-inte-ein}
 -(\Delta_g\lambda)g+\nabla^2_g\lambda-\lambda \Ric(g)  =g.
\ee
\begin{enumerate}
  \item [(i)] If $\kappa=1$, then $(M^n,g)$ is isometric to 
  $ \mathbb{S}^n$.
  \item [(ii)] If $\kappa=0$, then $(M^n,g)$ is isometric to 
  $ \mathbb{R}^n$. 
  \item [(iii)] If $\kappa=-1$, then $(M^n, g)$ is isometric 
  to $ \H^n $  provided 
  $\nabla\l(p)=0$ for some $p$. 
    If $\nabla \l \neq0$ everywhere, then $(M, g)$ is isometric 
  to $ (\R^1 \times \Sigma,d s^2+\cosh ^2 s g_0)$ and  $\l $ is 
  given by $A\sinh s+\frac1{n-1}$ for some constant $A>0$. Here $(\Sigma, g_0)$ is a
  complete  Einstein manifold satisfying
   $\Ric(g_0)=-(n-2)g_0$. In particular, $(M^n, g)$ has 
   constant sectional curvature $ -1 $ if $ n \leq 4$. 
\end{enumerate}

\end{thm}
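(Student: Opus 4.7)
The plan is to work throughout from the Hessian form \eqref{equi-eq-s1} of the equation, namely
$$
\nabla^2_g\lambda = \lf(-\kappa\lambda - \tfrac{1}{n-1}\ri)g,
$$
and to exploit the global scalar identity
$$
G := |\nabla\lambda|^2 + \kappa\lambda^2 + \tfrac{2}{n-1}\lambda \equiv \mathrm{const}
$$
on $M$, which follows from a one-line computation of $\nabla G$ using the Hessian equation. This conservation law will be used in each case to locate the critical set of $\lambda$, and in the hyperbolic subcase to control the global geometry.

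For part (i), setting $u = \lambda + \tfrac{1}{n-1}$ reduces the Hessian equation to $\nabla^2 u + u\,g = 0$ with $u$ nonconstant, and Obata's theorem immediately gives $(M^n,g)\cong\mathbb{S}^n$. For part (ii), $G$ specializes to $|\nabla\lambda|^2 + \tfrac{2}{n-1}\lambda$, so $\lambda$ is bounded above; along any integral curve of $\nu:=\nabla\lambda/|\nabla\lambda|$ the quantity $|\nabla\lambda|$ decreases at the constant rate $1/(n-1)$ and must vanish in finite arclength, producing a critical point $p$. Lemmas \ref{Einstein-l1} and \ref{Einstein-l2} then force $\exp_p$ to be a diffeomorphism, geodesic spheres to have mean curvature $(n-1)/s$, and geodesic balls to have the Euclidean volume; Bishop's rigidity under $\Ric\geq 0$ yields $(M,g)\cong\mathbb{R}^n$. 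Case (iii.a) follows the same template with $\lambda(q) = (\lambda(p)-\tfrac{1}{n-1})\cosh d(p,q) + \tfrac{1}{n-1}$, sphere mean curvature $(n-1)\coth s$, matching hyperbolic volume, and Bishop rigidity under $\Ric\geq -(n-1)g$ yielding $(M,g)\cong\mathbb{H}^n$.

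The substantive case is (iii.b). Here the conservation law reads $|\nabla\lambda|^2 = (\lambda-\tfrac{1}{n-1})^2 + C$. If $C<0$ then, by the ODE $\frac{d}{ds}|\nabla\lambda| = \lambda - \tfrac{1}{n-1}$ governing $\lambda$ along integral curves of $\nu$, each such curve limits in finite arclength to a point where $|\nabla\lambda|=0$, giving a critical point by completeness and contradicting the hypothesis; hence $C\geq 0$. The unit field $\nu := \nabla\lambda/|\nabla\lambda|$ is globally defined, and a short calculation (computing $\nabla_\nu\nabla\lambda$ and $\nu(|\nabla\lambda|)$ from the Hessian equation) yields $\nabla_\nu\nu = 0$, so its integral curves are complete unit-speed geodesics. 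Choose a connected level set $\Sigma = \{\lambda = \tfrac{1}{n-1}\}$, nonempty when $C>0$ by the range of $\lambda$, on which $|\nabla\lambda| \equiv \sqrt{C}$ by Lemma \ref{Einstein-l2}, and define $\Phi\colon\mathbb{R}\times\Sigma\to M$ by the time-$s$ flow of $\nu$; monotonicity of $\lambda$ along each integral curve gives injectivity, completeness gives surjectivity, and solving the ODE for $\lambda$ along integral curves gives $\lambda = A\sinh s + \tfrac{1}{n-1}$ with $A=\sqrt{C}>0$. In the coordinates $(s,x)$ one has $g = ds^2 + g_s$; Lemma \ref{Einstein-l2} combined with $\tfrac12\partial_s g_s = \II$ yields $\partial_s g_s = 2\tanh s\cdot g_s$, integrating to $g_s = \cosh^2 s\cdot g_0$. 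Substituting this warped product back into $\Ric(g)=-(n-1)g$ and separating $\partial_s$ and tangential components forces $\Ric(g_0) = -(n-2)g_0$. For $n\leq 4$, an Einstein manifold of dimension $n-1$ has constant sectional curvature, so $(\Sigma,g_0)$ has sectional curvature $-1$ and $(M,g)$ has constant sectional curvature $-1$.

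The hardest step will be the reduction to $C>0$ in case (iii.b) and the simultaneous verification that the level set $\Sigma = \{\lambda=\tfrac{1}{n-1}\}$ is nonempty so that the $\cosh^2 s$ normalization applies; the borderline $C=0$ would produce a horospherical slicing $ds^2+e^{2s}g_0$ with $(\Sigma,g_0)$ only Ricci-flat, and excluding this degenerate regime requires combining completeness of $M$ with a careful analysis of the ODE for $|\nabla\lambda|$ along integral curves of $\nu$.
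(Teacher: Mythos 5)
Your architecture tracks the paper's proof closely. In (i) you substitute $u=\lambda+\frac{1}{n-1}$ and invoke Obata's theorem where the paper instead shows the diameter equals $\pi$ and cites Cheng's maximal diameter theorem; both are valid. In (ii) and (iii.a) your conservation law $|\nabla\lambda|^2+\kappa\lambda^2+\frac{2}{n-1}\lambda=\mathrm{const}$ (which is correct, and is just the first integral of the paper's ODE for $\lambda$ along geodesics) produces the critical point that the paper produces via the explicit formulas of Lemma \ref{Einstein-l1}; the rest is the same rigidity argument. In (iii.b) your computation $\nabla_\nu\nu=0$ is a cleaner route to the splitting $M\cong\mathbb{R}\times\Sigma$ than the paper's argument with minimizing geodesics to level sets, though your one-line claims of injectivity and surjectivity of $\Phi$ gloss over the possibility that $\{\lambda=\frac{1}{n-1}\}$ is disconnected; an open-and-closed argument on the image of $\Phi$, or the paper's more careful construction, is needed there.

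The genuine gap is the one you name and then do not close: excluding $C=0$, i.e.\ (in the paper's notation $\lambda=A\sinh s+B\cosh s+\frac{1}{n-1}$) the case $A=|B|$, which produces $g=ds^2+e^{\pm2s}g_0$ with $(\Sigma,g_0)$ complete and Ricci-flat. Every feature of the stated conclusion of (iii.b) --- the totally geodesic slice $\{s=0\}$, the factor $\cosh^2 s$, the surjectivity of $\lambda$ onto $\mathbb{R}$, and $\Ric(g_0)=-(n-2)g_0$ --- depends on ruling this out, and your proposal offers no argument, only the remark that one is needed. The paper dismisses this case in one line by asserting that $\phi(s)=e^{\pm s}$ makes $g$ incomplete; you should not expect to be able to justify that assertion, because a warped product $ds^2+e^{2s}g_0$ over a complete fiber is metrically complete ($\Sigma$ is a level set of $\lambda$, hence closed in $M$ and complete, and hyperbolic cusps are the model case). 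Indeed, $M=\mathbb{R}\times T^{n-1}$ with $g=ds^2+e^{2s}g_{\mathrm{flat}}$ and $\lambda=e^{s}+\frac{1}{n-1}$ satisfies every hypothesis of (iii.b): $g$ is complete with $\Ric(g)=-(n-1)g$, one checks directly that $\nabla^2_g\lambda=(\lambda-\frac{1}{n-1})g$ so that \eqref{critical-inte-ein} holds, and $\nabla\lambda=e^{s}\partial_s$ never vanishes; yet $\lambda>\frac{1}{n-1}$ everywhere, so $\lambda$ cannot be of the form $A\sinh s+\frac{1}{n-1}$ in any coordinates. So the step you defer is not merely hard but fails: the horospherical profile is a genuine third alternative in (iii.b), and a correct treatment must either add it to the conclusion or impose an additional hypothesis (for instance, that some level set of $\lambda$ is compact with nontrivial normal holonomy behavior, or that $\lambda$ attains the value $\frac{1}{n-1}$) that excludes it.
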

\begin{proof} (i) If $ \kappa = 1 $, then $M$ is compact with diameter $ d \leq \pi$.   Choose  $p\in M$ such that  $\nabla\l(p)=0$. 
Let $ \alpha(s) $ be a geodesic defined on $[0, \infty)$ 
with $ \alpha(0) = p $. 
By  (i) in Lemma \ref{Einstein-l1}, $ \l ( \alpha ( \pi ) )  \neq \l (p) $,
hence $ \alpha(\pi) \neq p $.  By (ii) in Lemma \ref{Einstein-l1},
$ \alpha(s) $ is  minimizing  on $[0, \pi] $. Hence,  $ d \geq \pi $. 
Therefore $ (M, g) $ is isometric to $ \mathbb{S}^n $
by the maximal diameter theorem \cite{Cheng75}.

(ii) Suppose $ \kappa = 0 $, we show that $ \l $ must have an absolute  maximum.  Let $ q \in M $ be any given point.  
The exponential 
map $ exp_q( \cdot ): T_q M \rightarrow M $ is surjective, 
where $ T_q M $ is the tangent space of $ M$ at $ q $.
Define $ \tilde{ \l } = \l \circ exp_q $. Let $ S_q $ be the unit sphere
in $ T_q M $.  For any $ v \in S_q $ and any $ s \geq 0 $, 
\eqref{equi-eq-s1} implies
\be \label{comeq-s1}
 \frac{ d^2} {d s^2} \tilde{ \l } ( s v ) = - \frac{1}{n-1}  . 
 \ee
Since
$ \tilde{ \l } ( 0 ) = \l (q) $ and $ 
\frac{d}{ds}  \tilde{ \l } ( s v ) (0) = \la \nabla \l (q) , v \ra 
$,  \eqref{comeq-s1} implies 
\be
 \tilde{ \l } ( s v )  = - \frac{1}{2(n-1)} s^2 + 
 \la \nabla \l (q) , v \ra s + \l (q) . 
\ee
Since $ | \la \nabla \l (q) , v \ra | \leq | \nabla \l (q) | $, 
we have 
$
\lim_{s \rightarrow \infty}  \tilde{ \l } ( s v )  = - \infty
$
uniformly with respect to $ v \in S_q $. 
In particular,  $ \tilde{ \l } $ has an absolute maximum.
Therefore,  $ \l $ has an absolute maximum. Consequently, 
there exists $ p \in M $ such that $ \nabla \l ( p ) = 0 $.
 By (ii) in Lemma \ref{Einstein-l1},  
  the injectivity radius of $(M, g)$ at $ p $ is $ \infty$.
Hence,  we can proceed as in the proof of Theorem \ref{Einstein-t1} to conclude that $(M^n,g)$ is isometric to $\R^n$.

(iii) Suppose $ \kappa = - 1$. If $\nabla\l=0$ somewhere, 
we can proceed as in the proof of Theorem \ref{Einstein-t1} to conclude that $(M^n,g)$ is isometric to the hyperbolic space $\H^n$.
In what follows, we assume that  $\nabla\l$ is never zero. 
For  $a\in \R$, let $\l_a$ be   the level set $\{\l=a\}$. Then $\l_a$  is a smooth hypersurface whenever it is nonempty. By Lemma \ref{Einstein-l2}, $|\nabla\l|$ is constant on each connected 
component of $\l_a$.

Choose $a$ such that $\l_a$ is nonempty. Let $\Sigma$ be a connected component of $ \l_a $ and let $b > 0 $ 
be the constant that equals $ | \nabla \l | $ on $ \Sigma$.  
Let $p\in \Sigma$  be any chosen point.
Let $\gamma(s)$ the geodesic defined on $(-\infty, \infty)$
such that $ \gamma (0) = p $ and 
$\gamma'(0) = b^{-1} \nabla \l (p) $.  Then $f(s)=\l(\gamma(s))$ satisfies $ f(0) = a $, $ f' (0) = b $ and
\begin{equation}\label{k=0-e1}
 \frac{d^2f}{ds^2} - f  = - \frac{1}{n-1}.
\end{equation}
Suppose $f'(s)=0$ for some $s>0$. 
Let $s_1>0$ be the smallest $s>0$ such that $f'(s)=0$.
For any $ 0 < s  < s_1$, consider the level set $ \l_{f(s)} $.
Let  $ s' = $ dist$(p, \l_{f(s)} )$, then
$ s' \leq s $. Let $\alpha (\cdot)$ be a minimizing geodesic such that 
$ \alpha (0) = p $ 
 and $ \alpha (s') \in \l_{f(s)} $. Let  $F(s)=\l(\alpha(s))$. 
 Then $F$ also satisfies \eqref{k=0-e1} with $F(0)=a$ and $F'(0)\le b$. If $F'(0)<b$, by \eqref{k=0-e1} we have
  $f(s')>F(s')=f(s)$. On the other hand, the facts  
 $ s' \leq s $ and $f$ is strictly increasing on $[0,s]$ imply 
 $ f (s') \leq f(s) $, hence a contradiction. 
 Therefore, $F'(0)=b$. In this case, we have 
 $ \alpha'(0) = \gamma'(0) $, hence $ \alpha ( t)  = \gamma (t) $ 
 for all $ t \in [0, s']$. Since $ \l ( \alpha (s') ) = \l ( \gamma (s) ) $,  
 we conclude $s'=s$.
 Consequently,  $ s = $ dist$(p, \l_{f(s)})$ and 
 $\gamma'(s)\perp \l_{f(s)}$
 at $ \gamma (s) $.  Since $ s \in (0, s_1) $ is arbitrary, 
we have $\gamma'(s_1) \perp \l_{ f(s_1) }$
 at $ \gamma (s_1) $. In particular, $ \gamma'(s_1) $ and
 $ \nabla \l ( \gamma(s_1) )$ are parallel, hence
 $ f' (s_1) = \la  \gamma'(s_1), \nabla \l ( \gamma(s_1) ) \ra \neq 0 $.
 This contradicts the assumption $ f'(s_1) = 0 $. Therefore, 
 $ f'(s) \neq 0 $ for all $ s > 0 $. Similarly, we can 
 prove that $f'(s)\neq0$ for $s<0$.

Now we have $ f'(s) > 0 $ for all $ s $. Moreover, by the above proof,
we have  $ \gamma'(s)\perp \l_{f(s)}$
 at $ \gamma (s) $ for all $ s $. Hence,
 \be
 \nabla ( \l (s) ) = \phi (s) \gamma' (s) 
 \ee
 for some smooth positive function $ \phi (s) $ defined on
 $(-\infty, \infty)$. Therefore, after reparametrization, $\gamma$ is an integral curve of the vector field $\nabla\l$. In particular, two different $\gamma$ will not intersect. Since any point in $M$ 
 lies on  a geodesic that  is perpendicular to $ \Sigma $, 
 we conclude that  $(M, g)$ is isometric to 
 $ (\R^1 \times \Sigma, d s^2 + g_s ) $, where 
 $ \{ s \} \times \Sigma $ is the level set of dist($\cdot, \Sigma$)
 and $ g_s $ is the induced metric on $ \{ s \} \times \Sigma  $. 
 Moreover, by \eqref{k=0-e1} and the fact
 $ \l $ and $ | \nabla \l | $ are constants on $ \Sigma  $, we know
 $ \l $ depends only on $ s $ and $ \l = \l (s) $ is given by
\begin{equation}\label{k<0-e1}
    \l( s )=A\sinh s+B\cosh s +\frac1{n-1}
\end{equation}
for some constants $ A $ and $ B$. 
Since $|\nabla\l|=|\l'|$, which is never zero, by reversing 
$ \frac{\p}{\p s} $, we may assume that $\l' (s) >0$ for all $ s $. 
Let  $A_s$ be the second fundamental form of 
$\{s \}\times\Sigma$ w.r.t $ \frac{\p}{\p s}$.
By  Lemma \ref{Einstein-l2} and  \eqref{k<0-e1}, we have
\begin{equation}\label{k<0-e2}
\frac{\p }{\p s}g_s = 2 A_s = 2 |\nabla\l|^{-1}\lf(\l-\frac1{n-1}\ri)g_s
= 2 \frac{\l''}{\l'}g_s .
\end{equation}
Therefore, we conclude $g_s =\phi^2(s)g_0$, where 
 $$\phi(s)=\frac{\l'(s )}{\l'(0)}=A^{-1}\lf(A\cosh s+B\sinh s \ri) . $$ Since $\l'>0$, we  have $A>0$ and  $A\ge |B|$. If $A=|B|$, then $\phi(s)=e^{s}$ or $e^{-s}$ and the metric $ g $ is not complete. Hence, $A>|B|$.
 Therefore, $\l=\frac1{n-1}$ somewhere. By translating $ s$,
 we may assume $ \l (0) = \frac1{n-1}$. Then 
  $\l (s) =A\sinh s + \frac{1}{n-1} $, $\phi( s)=\cosh s$, and 
  \be \label{warped-s1}
   g = d s^2 +  \cosh^2 s  g_0 .
   \ee
Using the fact  $ \Ric (g)  =  - ( n -1) g $ and  \eqref{eq-Ric2-s2} in Lemma \ref{warpedproductmetric-l1} in the next section, we have 
$ \Ric(g_0) = - ( n - 2) g_0. 
$
When $n=  4 $, this implies $ g_0$ has constant sectional 
curvature $ -1$, hence $ g $ has constant 
sectional curvature $ -1$ by \eqref{warped-s1}. 
\end{proof}

Let $(\Sigma,g_0)$ be any  complete Einstein manifold 
with negative scalar curvature which is not a space form. 
Suppose $ \Ric(g_0) = - ( n -1) g_0$. 
Consider the warped product 
$ (M, g) = (\R^1 \times \Sigma, d s^2 + \cosh^2 s g_0) $.
Define $ \l = A\sinh s+\frac1{n-1}$ on $ M $, where $A>0$
is  a constant. It is easy to  verify  that 
$ \l $ is a  solution to \eqref{critical-inte-ein}. 
In this case, $(M, g)$ is complete,  Einstein, but 
is not a space form.

\section{Warped-product critical metrics}
In this section, we first seek a general procedure
to construct warped-product metrics $ g $ which  
satisfy 
\be \label{critical-inte-s2}
 -(\Delta_g\lambda)g+\nabla^2_g\lambda-\lambda \Ric(g)  =g
\ee
for some function $ \l $. Then we construct examples of
critical metrics with disconnected 
boundary and non-Einstein critical metrics whose boundary is a standard round sphere.
The first part of our discussion is  motivated by the  work of Kobayashi in \cite{Kobayashi}. 

Let $ (N, h)$ be a Riemannian manifold of dimension
$ n-1$. Let $ I \subset \mathbb{R}^1 $ be an open interval
and $ d s^2 $ be the standard metric on $ I $.
Let $ r  $ be a smooth positive function on  $I $.
 Consider the  warped-product metric
 $$ g = d s^2 + r^2  h $$
 on $M = I \times N $.

\begin{lma}\label{warpedproductmetric-l1}
\begin{enumerate}
 \item[(i)]
 The Ricci curvature of $g$ is given by
\be \label{eq-Ric1-s2}
\begin{split}
 \Ric({g} )(\p_s, \p_s)  = & - ( n -1 ) \frac{ r^{\prime \prime} }{r} ,
\end{split}
\ee
\be \label{eq-Ric2-s2}
 \Ric(g) |_{ T N} = \Ric(h) - \lf[ ( n -2) \left(\frac{ r^\prime}{r} \right)^2
+ \frac{ r^{\prime \prime} }{r} \ri] g |_{TN} ,
\ee
\be \label{eq-Ric3-s2}
\Ric( \p_s, X) = 0 , \ \forall \ X \in TN ,
\ee
where $`` \ '  \ " $ denotes the derivative taken with respect to $ s \in I$,  $ \Ric(h)$ is the Ricci curvature of $ h $ and $ T N$
 denotes the tangent space to $ N$. Consequently,
 \be \label{eq-rRh-s2}
R (g) = - 2 ( n -1 ) \lf( \frac{ r^{\prime \prime} }{r} \ri)
+ \frac{R(h)}{r^2} - ( n-1) ( n -2) \left(\frac{ r^\prime}{r} \right)^2,
\ee
where $ R(g)$, $R(h) $ are  the scalar curvature of $ g$, $h$
respectively.

\item[(ii)] Suppose $\l$ is a smooth function on  $ M$ depending only on $s$, then

\be \label{eq-Hessl-s2}
\nabla^2_g \l (\p_s, \p_s) = \l^{\prime \prime}, \
\nabla^2_g \l |_{ TN} = \lf( \frac{ r^\prime }{ r} \right) \l^\prime g |_{TN} ,
\  \nabla^2_g \l (\p_s, X) = 0
\ee
where $ X \in TN$.
 \end{enumerate}
 \end{lma}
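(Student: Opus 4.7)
The proof is a direct computation using the warped product structure, and the plan is to carry it out in three stages.

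First I would derive the Levi-Civita connection $\nabla$ of $g = ds^2 + r^2 h$ via the Koszul formula. Lifting vector fields from $N$ to $M$ horizontally (so that $[\p_s, X] = 0$ whenever $X$ is such a lift), one obtains the standard warped-product identities
\begin{equation*}
\nabla_{\p_s} \p_s = 0, \quad \nabla_{\p_s} X = \nabla_X \p_s = \frac{r'}{r} X, \quad \nabla_X Y = \nabla^h_X Y - r r' \, h(X,Y) \, \p_s,
\end{equation*}
for $X, Y$ lifts of vector fields on $N$, where $\nabla^h$ denotes the Levi-Civita connection of $h$. These three formulas are the core computational input for everything that follows.

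Next I would compute the Riemann tensor $R_g$ from these connection formulas and trace to obtain $\Ric(g)$. The mixed components $R_g(\p_s, X, Y, Z) = 0$ (for $X, Y, Z \in TN$) follow from the fact that $r$ depends only on $s$ and the resulting symmetry of the connection, which immediately yields \eqref{eq-Ric3-s2}. The radial sectional curvatures give $R_g(\p_s, X, \p_s, X) = - r r'' \, h(X,X)$ for $X \in TN$; tracing over an $h$-orthonormal frame and using the factor $r^{-2}$ needed to convert to a $g|_{TN}$-orthonormal frame produces \eqref{eq-Ric1-s2}. The purely horizontal curvature follows from a Gauss-type identity of the form
\begin{equation*}
R_g(X,Y,Z,W) = r^2 R^h(X,Y,Z,W) - r^2 (r')^2 \lf[ h(X,Z) h(Y,W) - h(X,W) h(Y,Z) \ri],
\end{equation*}
and contracting with respect to $g|_{TN}$ gives \eqref{eq-Ric2-s2}. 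A further trace of the Ricci tensor then yields the scalar curvature formula \eqref{eq-rRh-s2}.

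For part (ii), I would apply the definition $\nabla^2_g \l (V, W) = V(W(\l)) - (\nabla_V W)(\l)$. Since $\l$ depends only on $s$, we have $W(\l) = 0$ for every $W \in TN$ while $\p_s(\l) = \l'$. Substituting the connection formulas from the first step gives the three identities in \eqref{eq-Hessl-s2} immediately; in particular, the middle identity arises from $(\nabla_X Y)(\l) = - r r' h(X,Y) \l'$ together with $g|_{TN} = r^2 h$.

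The computation is routine, so the main obstacle is purely notational: one must carefully distinguish between traces taken with respect to $h$ and those taken with respect to $g|_{TN} = r^2 h$, which accounts for the factors of $r^{-2}$ appearing in the Ricci and scalar curvature formulas when they are expressed in terms of $R(h)$, $(r')^2$, and $r''$.
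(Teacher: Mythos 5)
Your computation is correct and is exactly the standard warped-product calculation that the paper simply delegates to Bishop--O'Neill for part (i) and calls a ``direct computation'' for part (ii); the connection formulas, the curvature components, and the Hessian identities all check out. One small point of care: to obtain \eqref{eq-Ric2-s2} you must trace over a full orthonormal frame of $M$, so the radial term $R_g(\p_s, X, \p_s, Y) = -r r'' h(X,Y)$ has to be added to the contraction of your Gauss-type identity over $TN$ --- that is where the $-\frac{r''}{r} g|_{TN}$ term comes from.
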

 \begin{proof} (i) is standard, see \cite{BishopONeill69}. Direct computations give (ii).
 \end{proof}

To proceed, we note that \eqref{critical-inte-s2}
implies 
\begin{equation}\label{critical-e2-s2}
\Dg\l=-\frac1{n-1}\lf( R\l+n\ri),
\end{equation}
Hence,  \eqref{critical-inte-s2} is equivalent to
\begin{equation}\label{critical-e3-s2}
\nabla^2_g\l=\l \Ric-\frac{R\l +1}{n-1}g. 
\end{equation}

  \begin{prop} \label{prop-1-s2}
For any  constant $ R $, the metric $ g $ has constant
scalar curvature $ R $ and satisfies  \eqref{critical-inte-s2}
for a smooth function $ \l $ depending
 only on $ s \in I$,  if and only if the following holds:
\begin{enumerate}

 \item[(i)] $(N, h)$ is an Einstein manifold with $ \Ric(h) = ( n-2) \kappa_0 h $,   the function  $ r $ satisfies
\be \label{eq-rRa-s2}
  r^{\prime \prime} + \frac{ R}{n ( n -1) } r = a r^{1-n}
\ee
for some constant $ a $, and the constant $ \kappa_0 $ satisfies
\be \label{eq-rRa-s2-2}
(r^\prime)^2 + \frac{R}{n(n-1)} r^2  + \frac{ 2 a}{ n-2} r^{2-n} =
\kappa_0.
\ee

\item[(ii)] The function  $ \l $ satisfies

\be\label{lambda-e1} r^\prime \l^\prime -  r^{\prime \prime} \l = -
\frac{1}{n-1} {r}  .
\ee
\end{enumerate}
\end{prop}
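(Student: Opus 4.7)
The strategy is to exploit the warped-product structure to reduce the tensor equation \eqref{critical-inte-s2} to scalar ODEs in $s \in I$, using the explicit formulas in Lemma \ref{warpedproductmetric-l1}. Recall that \eqref{critical-inte-s2} is equivalent to $g$ having constant scalar curvature together with \eqref{critical-e3-s2}.

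I would first decompose \eqref{critical-e3-s2} into its $(\partial_s,\partial_s)$-component (call it (A)) and its tangential component along $TN$ (call it (T)); the mixed component $(\partial_s, X)$ for $X \in TN$ vanishes automatically by \eqref{eq-Ric3-s2} and \eqref{eq-Hessl-s2}. Substituting from Lemma \ref{warpedproductmetric-l1}, equation (T) takes the form $\lambda \, \Ric(h) = F(s)\, h$ for a function $F(s)$ depending only on $s$. Since $\Ric(h)$ is independent of $s$ and $\lambda \not\equiv 0$ (otherwise the right-hand side of \eqref{critical-inte-s2} would vanish), this forces $(N,h)$ to be Einstein, $\Ric(h) = (n-2)\kappa_0 h$ for some constant $\kappa_0$, which gives the Einstein condition in (i). Then, using the scalar curvature identity \eqref{eq-rRh-s2} with $R(h) = (n-1)(n-2)\kappa_0$ (call this (S)) to eliminate $\kappa_0$ from (T), a direct cancellation reduces (T) to exactly \eqref{lambda-e1}, which is (ii).

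To derive the remaining ODEs for $r$ in (i), I would differentiate \eqref{lambda-e1} in $s$ and use (A) to eliminate $\lambda''$; after simplification the result is $\lambda \cdot \bigl[\, r''' + (n-1) \frac{r' r''}{r} + \frac{R r'}{n-1} \,\bigr] = 0$, so the bracketed expression vanishes wherever $\lambda \neq 0$. Setting $u = r'' + \frac{R}{n(n-1)} r$, a short computation shows the bracketed identity is equivalent to $(r^{n-1} u)' = 0$, so $r^{n-1} u = a$ for some constant $a$, giving \eqref{eq-rRa-s2}. Multiplying \eqref{eq-rRa-s2} by $r'$ and integrating yields a first integral; substituting (S) identifies the integration constant as $\kappa_0$, producing \eqref{eq-rRa-s2-2}.

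For the converse, assuming (i) and (ii), substitution of the ODEs into \eqref{eq-rRh-s2} directly verifies $R(g) = R$, and the algebra above reverses: (T) follows from \eqref{lambda-e1} together with (S), while (A) is recovered by differentiating \eqref{lambda-e1} and using \eqref{eq-rRa-s2} to eliminate $r'''$. The main technical care required is with the zeros of $\lambda$: the bracketed third-order ODE for $r$ is only immediate where $\lambda \neq 0$, but since $r^{n-1}u$ is smooth on $I$, the identity $r^{n-1} u = a$ persists on all of $I$ by continuity, so this is not a serious obstacle.
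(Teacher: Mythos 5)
Your proposal is correct and follows essentially the same route as the paper: decompose \eqref{critical-inte-s2} componentwise via Lemma \ref{warpedproductmetric-l1}, extract the Einstein condition on $(N,h)$ and the first-order relation \eqref{lambda-e1}, differentiate to obtain $\bigl(r^{n-1}r''+\tfrac{R}{n(n-1)}r^n\bigr)'=0$, and reverse the algebra for the converse (the paper happens to read \eqref{lambda-e1} off the $\p_s$-component combined with the trace rather than off the tangential component plus the scalar curvature identity, which is a cosmetic difference, and it verifies \eqref{eq-rRa-s2-2} by substitution into \eqref{eq-rRh-s2} rather than by integrating, which amounts to the same computation). The one point to make explicit behind your ``by continuity'' remark is that \eqref{lambda-e1} together with $r>0$ forbids $\lambda$ and $\lambda'$ (respectively $r'$ and $r''$) from vanishing simultaneously, so $\{\lambda\neq 0\}$ (respectively $\{r'\neq 0\}$) is dense and the continuous bracketed expression vanishes identically; the same remark is needed in the converse when you divide by $r'$ to recover the $(\p_s,\p_s)$-component.
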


\begin{proof}
Suppose $g$ has constant scalar curvature $ R $ and there is a smooth function $\l = \l(s)$ satisfying \eqref{critical-inte-s2}. Since $\l$ can not be identically zero, there exists  $s_0\in I$  such that $\l(s_0)\neq0$. At $s_0$, by  Lemma \ref{warpedproductmetric-l1} and 
\eqref{critical-e3-s2}, we have

 \begin{equation}\label{warpedproductmetric-e1}
 \begin{split}
 \Ric(h)&=\Ric(g) |_{ T N} + \lf[ ( n -2) \left(\frac{ r^\prime}{r} \right)^2
+ \frac{ r^{\prime \prime} }{r} \ri] g |_{TN}\\
&=\frac1{\l}\lf(\nabla^2_g\l+\frac{R\l +1}{n-1}g\ri)|_{TN}+ \lf[ ( n -2) \left(\frac{ r^\prime}{r} \right)^2
+ \frac{ r^{\prime \prime} }{r} \ri] g |_{TN}\\
&=\lf[\frac1{\l}\lf( \frac {r' \l' }r +\frac{R\l +1}{n-1} \ri) +   ( n -2) \left(\frac{ r^\prime}{r} \right)^2
+ \frac{ r^{\prime \prime} }{r} \ri] g |_{TN} .
\end{split}
\end{equation}
Since $R$ is a constant and $r$ and $\l$ depend only on $s$,  \eqref{warpedproductmetric-e1}
implies  that $(N,h)$ is Einstein. Suppose $\Ric(h)=(n-2)\kappa_0 h$
where $ \kappa_0$ is a constant.

Evaluating both sides of \eqref{critical-inte-s2}  at $\p_s$, using Lemma \ref{warpedproductmetric-l1} and the fact that
$$
\nabla^2\l(\p_s,\p_s)-\Dg\l=-(n-1)\frac{ r^\prime}{r} \l'
$$
we have
$$
-(n-1)\frac{ r^\prime}{r} \l' +   ( n -1 ) \frac{ r^{\prime \prime} }{r}  \l
= 1,
$$
which proves (ii).

Differentiating \eqref{lambda-e1}, using \eqref{critical-e2-s2},
\eqref{lambda-e1}  and the fact that
$$
\Dg\l=\l''+(n-1)\frac{r'\l'}r ,
$$
 we have
\begin{equation*}
\begin{split}
       -\frac{r'}{n-1}& =r'\l''-r'''\l  \\
         & =\lf(\Dg\l-(n-1)\frac{r'\l'}r\ri)r' -r'''\l \\
         &= \lf(-\frac{R\l+n}{n-1}-(n-1)\frac{r''\l}{r}+1\ri)r'-r'''\l .
     \end{split}
\end{equation*}
Hence
\begin{equation} \label{rtriplep-e1}
 \lf[  r^{\prime \prime \prime} + ( n - 1)   \frac{ r^\prime r^{\prime \prime} }{r}  +
    \frac{R}{n-1} r^\prime  \ri] \l =  0 .
\end{equation}
By \eqref{lambda-e1}, if $\l(s)=0$, then $\l'(s)\neq0$. Hence the set
$\{s\in I|\ \l\neq0\}$ is dense in $I$. So \eqref{rtriplep-e1} shows
\be \label{rtriple-e2}
r^{\prime \prime \prime} + ( n - 1)   \frac{ r^\prime r^{\prime \prime} }{r}  +
    \frac{R}{n-1} r^\prime \equiv0
\ee
    in $I$. Multiplying \eqref{rtriple-e2} by $r^{n-1}$ and using the fact that $R$ is a constant and $r>0$, we conclude from \eqref{rtriple-e2} that
    \begin{equation*}
 \lf[ r^{n-1} r^{\prime \prime} + \frac{R}{n(n-1)} r^n \ri]^\prime = 0 ,
\end{equation*}
which is  equivalent to
\begin{equation*}
  r^{\prime \prime} + \frac{R}{n(n-1)} r = a r^{1-n}
\end{equation*}
for some constant $ a $. Now \eqref{eq-rRa-s2-2} follows directly from \eqref{eq-rRa-s2}, \eqref{eq-rRh-s2} and the fact $ R(h) = ( n-1)(n-2) \kappa_0 $.

Conversely, suppose $(N,h)$ is Einstein with $\Ric(h)=(n-2)\kappa_0 h$ and the functions $r$, $\l$ satisfy \eqref{eq-rRa-s2}-\eqref{lambda-e1}. Let $g=ds^2+r^2h$. By Lemma \ref{warpedproductmetric-l1}, the scalar curvature $R(g)$ of $g$
is given by
\be\label{warpedproductmetric-e5}
R(g)  = - 2 ( n -1 ) \lf( \frac{ r^{\prime \prime} }{r} \ri)
+ \frac{(n-1)(n-2)\kappa_0}{r^2} - ( n-1) ( n -2) \left(\frac{ r^\prime}{r} \right)^2 .
\ee
Hence, $R(g)=R$ by \eqref{eq-rRa-s2} and \eqref{eq-rRa-s2-2}.
Next, suppose $X, Y\in TN$. By Lemma \ref{warpedproductmetric-l1}
and \eqref{eq-rRa-s2}-\eqref{lambda-e1},  we have
\begin{equation}\label{warpedproductmetric-e4}
\begin{split}
& \l\Ric(g)(X,Y) -\frac{R\l+1}{n-1}g(X,Y) \\
= &
\lf[\frac{(n-2)\l\kappa_0}{r^2}-(n-2)\l\lf(\frac{r'}r\ri)^2-\frac{r''\l}r-\frac{R\l+1}{n-1}\ri]g(X,Y)\\
= & \frac{r'\l'}r g(X,Y) =  \nabla^2_g\l(X,Y)
\end{split}
\end{equation}
and
\begin{equation}\label{warpedproductmetric-e3}
\nabla^2_g\l(\p_s,X)=0=\l\Ric(g)(\p_s,X)-\frac{R\l+1}{n-1}g(\p_s,X).
\end{equation}
On the other hand, differentiating \eqref{eq-rRa-s2},
\eqref{lambda-e1} and canceling $ r'''$, we have
\be \label{rtriplep-e3}
r' \l'' + \lf[ ( n - 1 )a r^{-n} + \frac{R}{n(n-1)} \ri] r' \l
= - \frac{ r'}{n-1}.
\ee
By \eqref{lambda-e1}, if $r'(s)=0$, then $r''(s)\neq0$. Hence the set
$\{r'(s) \in I|\ \l\neq0\}$ is dense in $I$. So \eqref{rtriplep-e3} implies
\be \label{rtriplep-e4}
 \l'' + \lf[ (  n - 1  ) a r^{-n} + \frac{R}{n(n-1)} \ri] \l
= - \frac{ 1}{n-1}.
\ee
By \eqref{eq-rRa-s2},   \eqref{rtriplep-e4} becomes
\be
 \l'' + \lf[ ( n- 1 ) \frac{r''}{r} +   \frac{R}{n-1} \ri] \l
= - \frac{ 1}{n-1} ,
\ee
from which we see that
\begin{equation}\label{warpedproductmetric-e2}
\nabla^2_g\l(\p_s,\p_s)=\l\Ric(g)(\p_s,\p_s)-\frac{R\l+1}{n-1}g(\p_s,\p_s).
\end{equation}
by Lemma \ref{warpedproductmetric-l1}.

By \eqref{warpedproductmetric-e4}, \eqref{warpedproductmetric-e3}
and \eqref{warpedproductmetric-e2}, we conclude that
$\l$ satisfies \eqref{critical-inte-s2}. This completes the proof of the proposition.

\end{proof}

\begin{rmk}\label{s2-r1}
The constant $ a $ in \eqref{eq-rRa-s2}  has a geometric interpretation. Assuming $ r $ and $ (N, h)$ satisfy
$ \mathrm{(i)} $ and $ \mathrm{(ii)} $ in
Proposition \ref{prop-1-s2}, then it follows
from Lemma \ref{warpedproductmetric-l1} and
\eqref{eq-rRa-s2}  that
\be
\begin{split}
 \Ric({g} )(\p_s, \p_s)  = &  - ( n -1 ) a r^{-n} +  \frac{R}{n} \\
 \Ric(g) |_{ T N} =  & \lf( a r^{-n} + \frac{R}{n} \ri) g |_{TN} .
\end{split}
\ee
Hence, $ a = 0 $ if and only if $ g $ is an Einstein metric.
\end{rmk}

\begin{rmk}\label{s2-r2}
The condition \eqref{eq-rRa-s2} on the function $ r $ in Proposition
\ref{prop-1-s2} turns out to be the same condition that Kobayashi obtained in \cite{Kobayashi}
where he constructed warped-product solutions to an equation,
similar to \eqref{critical-inte-s2},
\be \label{eq-gstatic-s2}
 -(\Delta_g f ) g+\nabla^2_g f  - f  \Ric(g)  = 0 ,
\ee
where the metric $ g $ and the function $ f $ are the unknowns.
Kobayashi proved that, if $(N, h)$ has constant sectional curvature, then $ g = d s^2 + r^2 h $
satisfies \eqref{eq-gstatic-s2} with some function $ f = f(s) $ if
and only \eqref{eq-rRa-s2} holds $\mathrm{(}$see Lemma 1.1
in \cite{Kobayashi}$\mathrm{)}$. Equation
\eqref{eq-gstatic-s2} is of interest to study because of its root in
 general relativity $ \mathrm($see \cite{FischerMarsden1975}, \cite{KobayashiObata1981}, \cite{Corvino07}, etc$\mathrm{)}$.
\end{rmk}

Next, we consider the function $ \l $ in Proposition \ref{prop-1-s2}.
Viewed as an ODE about $ \l $,  equation \eqref{lambda-e1}
becomes singular at points where $ r^\prime $ is zero.
Nonetheless, we show  it always has a solution $ \l $
as long as $ r $ is a
non-constant solution to \eqref{eq-rRa-s2}.

\begin{lma} \label{lma-3-s2}
Suppose $ r$ is a smooth, positive, non-constant
solution to
\be \label{eq-rRa2-s2}
   r^{\prime \prime} + \frac{R}{n(n-1)} r = a r^{1-n}
\ee
on $ I$, where $ R $ and $ a $ are some given
constants. Then
\begin{enumerate}
\item[(i)] $r'$ and $r''$ can not vanish simultaneously at any point
in $ I $.
\item[(ii)] Suppose  $r'(s_0)\neq 0$, $s_0\in I$. Given any initial condition $\l(s_0)=c$, there is unique solution $\l$ of \eqref{lambda-e1} on $I$ such that $\l(s_0)=c$.
\item[(iii)]   Suppose  $r''(s_0)\neq 0$, $s_0\in I$. Given any initial condition $\l'(s_0)=c$, there is unique solution $\l$ of \eqref{lambda-e1} on $I$ such that $\l'(s_0)=c$.
\item[(iv)] Any two solutions to \eqref{lambda-e1}
differs by a constant multiple of $ r^\prime $.
\end{enumerate}
\end{lma}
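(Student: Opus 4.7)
The overall plan is to observe that \eqref{lambda-e1} is a first-order ODE for $\lambda$ that becomes singular precisely at zeros of $r'$, but it can be promoted to a linear second-order ODE whose coefficients are smooth on all of $I$. Existence and uniqueness of $\lambda$ then reduce to standard linear ODE theory together with a compatibility check at the initial point $s_0$.

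For (i), suppose for contradiction that $r'(s_0)=r''(s_0)=0$ for some $s_0\in I$. Evaluating \eqref{eq-rRa2-s2} at $s_0$ gives $a\,r(s_0)^{1-n}=\frac{R}{n(n-1)}\,r(s_0)$, which is precisely the condition for the constant function $s\mapsto r(s_0)$ to be a solution of \eqref{eq-rRa2-s2}. Since the right-hand side of \eqref{eq-rRa2-s2} is smooth in $r>0$, uniqueness for second-order ODEs with initial data $(r(s_0),0)$ forces $r\equiv r(s_0)$, contradicting the non-constancy of $r$.

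For (ii)--(iv), differentiate \eqref{lambda-e1} once and substitute the identity
\[
r'''+(n-1)\frac{r'r''}{r}+\frac{R}{n-1}\,r'=0,
\]
which is \eqref{rtriple-e2} and follows directly from \eqref{eq-rRa2-s2}. After cancelling the common factor $r'$ one arrives at the linear second-order equation
\begin{equation}\label{lma3-2nd}
\lambda''+\left[(n-1)\frac{r''}{r}+\frac{R}{n-1}\right]\lambda=-\frac{1}{n-1},
\end{equation}
whose coefficients are smooth on all of $I$ since $r>0$. Define $\psi(s):=r'(s)\lambda'(s)-r''(s)\lambda(s)+r(s)/(n-1)$. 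A short computation using \eqref{lma3-2nd} and the above identity for $r'''$ yields $\psi'\equiv 0$ along any solution of \eqref{lma3-2nd}; hence a solution of \eqref{lma3-2nd} satisfies \eqref{lambda-e1} on $I$ if and only if $\psi(s_0)=0$.

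Parts (ii)--(iv) then follow quickly. In (ii), the hypothesis $r'(s_0)\neq 0$ lets us solve the linear equation $\psi(s_0)=0$ for $\lambda'(s_0)$ given $\lambda(s_0)=c$, and then \eqref{lma3-2nd} has a global solution on $I$ with these data. Similarly in (iii), the condition $\psi(s_0)=0$ is solvable for $\lambda(s_0)$ given $\lambda'(s_0)=c$ because $r''(s_0)\neq 0$. For (iv), $r'$ itself solves the homogeneous version of \eqref{lma3-2nd} by the same identity for $r'''$; if $\lambda_1,\lambda_2$ both satisfy \eqref{lambda-e1}, then $\mu:=\lambda_1-\lambda_2$ satisfies $r'\mu'=r''\mu$, so $(\mu/r')'=0$ on each component of $\{r'\neq 0\}$, giving $\mu=c_j r'$ there. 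By (i), the zeros of $r'$ are isolated and $r''\neq 0$ at each such zero, so matching the (smooth) value of $\mu'$ across a zero $s_*$ forces $c_j r''(s_*)=c_{j+1} r''(s_*)$ and all the constants coincide. Uniqueness in (ii) and (iii) is then immediate: the difference $cr'$ of two solutions must vanish at $s_0$ in (ii) and have zero derivative at $s_0$ in (iii), forcing $c=0$. The main subtlety throughout is the singular nature of \eqref{lambda-e1} at zeros of $r'$, resolved by passing to the regular equation \eqref{lma3-2nd} and by the conservation of $\psi$ guaranteed by the identity for $r'''$.
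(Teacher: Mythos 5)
Your proof is correct and follows essentially the same route as the paper: both pass from the singular first-order equation \eqref{lambda-e1} to the regular linear second-order equation \eqref{eq-r2d-s2}, use the conservation of $r'\l'-r''\l+\tfrac{r}{n-1}$ to transfer solutions back and forth, and prove (iv) by dividing by $r'$ on the components where it is nonzero. The only cosmetic differences are that in (i) you apply ODE uniqueness to $r$ itself (via the constant solution) rather than to the linear equation satisfied by $r'$, and in (iv) you spell out the matching of constants across the isolated zeros of $r'$, which the paper leaves implicit.
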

\begin{proof}
(i) Taking derivative of \eqref{eq-rRa2-s2},
\be\label{eq-drRa-s2}
r^{\prime \prime \prime} +
\lf[ \frac{R}{n(n-1)} +  ( n -  1) a r^{-n} \ri]
 r^\prime = 0 .
\ee
 Suppose $r'(s_0)=r''(s_0)=0$ for some $s_0\in I$, then $r'\equiv0$ by the uniqueness of solutions to the ODE \eqref{eq-drRa-s2}. Since $r$ is non-constant, this is impossible.

 (ii) Suppose $r'(s_0)\neq0$ and $c$ is given. On $ I $,
 we can solve for $\l$
 \be \label{eq-r2d-s2}
 \l^{\prime \prime} +
\lf[ \frac{R}{n(n-1)} + (n-1) a r^{-n} \ri] \l =  - \frac{1}{n-1}
  \ee
  with initial data $\l(s_0)=c$ and
  $\l'(s_0) = \frac{1}{ r'(s_0)} \lf[cr''(s_0)- \frac{r(s_0)}{n-1}\ri] .$
  Let $ \l $ be such a solution to \eqref{eq-r2d-s2}.
  By \eqref{eq-drRa-s2} and \eqref{eq-r2d-s2}, we have
  $$
  \lf(r'\l'-r''\l+\frac{r}{n-1}\ri)'=0
  $$
  on $I$. Since $r'\l'-r''\l+\frac{r}{n-1}=0$ at $s_0$, $\l$ satisfies \eqref{lambda-e1} with $\l(s_0)=c$.

Conversely, if $\l$ is a solution of \eqref{lambda-e1} with $\l(s_0)=c$, we must have $\l'(s_0)= \frac{1}{ r'(s_0)}
\lf[cr''(s_0)- \frac{r(s_0)}{n-1}\ri] $ since
$ r'(s_0) \neq 0$. On the other hand,
 $ \l $ satisfies \eqref{eq-r2d-s2} by the proof of
 Proposition \ref{prop-1-s2}. Hence,
 $ \l $  is unique.

  (iii) can be proved in the same way as (ii) is proved.

  (iv) Let $ \l_1 $, $\l_2 $ be any two solutions to \eqref{lambda-e1}  on $ I$. Let $ \phi = \l_1 - \l_2 $, then $ \phi $ satisfies
$
r^\prime \phi^\prime - r^{\prime \prime} \phi = 0 ,
$
which implies $ \phi $ is a constant multiple of $ r^\prime $
on any sub-interval of $ I$ where $ r^\prime $ is never zero.
By (i), the roots of $ r^\prime $ are isolated in $ I $. Therefore,
$ \phi = C r^\prime $ on $ I$ for some constant $ C$.

\end{proof}

In what follows, we always assume $ R $ and
$ a $ are two  given constants.
By Proposition \ref{prop-1-s2} and
Lemma \ref{lma-3-s2},
  any non-constant, positive solution $ r $ to the ODE
\be \label{eq-rRa3-s2}
 r^{\prime \prime} + \frac{R}{n(n-1)} r =a r^{1-n},
 \ee
on an interval $ I $,  will give rise to a
metric $ g = d s^2 + r^2 h $, on $ M = I \times N$,
which satisfies \eqref{critical-inte-s2}
for some function $ \l$ (provided $(N, h)$ is an
Einstein manifold with Ricci curvature properly chosen).
It is natural to know if one can obtain a compact $(M,g) $
from this procedure  such that $ \l = 0 $ on $ \partial M $.
For this purpose, we  consider
solutions $ r $ to \eqref{eq-rRa3-s2} existing on $ \R^1 $ and
ask how many roots the associated solutions $ \l $ to
\eqref{lambda-e1}  may have.

The following lemma was proved by Kobayashi in
 \cite{Kobayashi}.

\begin{lma}\label{lma-kb-s2}
Suppose $ a > 0$, then any local positive solution to
\eqref{eq-rRa3-s2} can be extended as a positive solution
  on $ \R^1 $. If in addition $ R > 0$,  then each non-constant solution
  on $ \R^1 $ is periodic.
\end{lma}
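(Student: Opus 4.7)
The main tool is the first integral of the ODE
\be \label{eq-rRa-plan}
r'' + \frac{R}{n(n-1)} r = a r^{1-n} .
\ee
I would first multiply \eqref{eq-rRa-plan} by $r'$ and integrate to obtain the conservation law
\be
(r')^2 + \frac{R}{n(n-1)} r^2 + \frac{2a}{n-2} r^{2-n} = E
\ee
for some constant $E$ along the solution. Introducing the ``effective potential''
\[
V(r) = \frac{R}{n(n-1)} r^2 + \frac{2a}{n-2} r^{2-n} ,
\]
this reads $(r')^2 = E - V(r)$. Since $a>0$ and $n\ge 3$, we have $V(r)\to+\infty$ as $r\to 0^+$; consequently, any solution with a finite energy $E$ is trapped in $\{ r : V(r)\le E\}$, which is a subset of $(\delta,\infty)$ for some $\delta>0$ depending on $E$. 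This automatically prevents $r$ from reaching zero.

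For global existence on $\R^1$, the conservation law shows $|r'|\le \sqrt{E-V(r)}$, so at worst $(r')^2$ grows quadratically in $r$ (when $R<0$) or is bounded (when $R\ge 0$). A standard comparison with the linear ODE $y'=Cy$ then rules out finite-time blow up, allowing the solution to be extended to all of $\R^1$ as a positive function. (In the ``energetically forbidden'' case where $r$ would want to reach a turning point $r_\pm$ with $E=V(r_\pm)$, one simply checks that $V'(r_\pm)\ne 0$ generically, so $r$ reaches $r_\pm$ in finite time but bounces back; this is fine and does not obstruct the extension.)

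Now suppose in addition that $R>0$. Then $V(r)\to+\infty$ as $r\to 0^+$ and as $r\to\infty$, and $V$ has a unique minimum at $r_0$ where $V'(r_0) = 2\bigl[\frac{R}{n(n-1)} r_0 - a r_0^{1-n}\bigr] = 0$; this corresponds exactly to the constant solution of \eqref{eq-rRa-plan}. For any non-constant solution, $E>V(r_0)$, and the super-level-set $\{ V\le E\}$ is a compact interval $[r_-,r_+]\subset(0,\infty)$ with $V(r_\pm)=E$ and $V'(r_\pm)\ne 0$. At the turning points, $r'=0$ but $r''=\tfrac{1}{2}\,V'(r_\pm) \cdot(\pm 1)\ne 0$, so $r$ leaves $r_\pm$ monotonically and must return, completing one oscillation cycle. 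The period
\[
T = 2\int_{r_-}^{r_+} \frac{dr}{\sqrt{E-V(r)}}
\]
is finite because the integrand is integrable at $r_\pm$ (the singularity is of order $|r-r_\pm|^{-1/2}$ since $V'(r_\pm)\ne 0$), and by uniqueness of solutions to \eqref{eq-rRa-plan} the solution is therefore $T$-periodic.

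The only genuinely delicate point is the smoothness of the ``bounce'' at a turning point; this is where the fact that $V'(r_\pm)\ne 0$ enters, ruling out the degenerate possibility $E = V(r_0)$ which corresponds precisely to the constant (non-periodic but trivially periodic) solution. Everything else is a routine application of conservation of energy for a one-dimensional Hamiltonian system.
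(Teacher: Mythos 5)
Your proof is correct, and it is essentially the standard argument: the paper itself does not prove this lemma (it defers to Kobayashi's paper), but both Kobayashi's proof and the paper's own subsequent analysis rest on exactly the first integral you derive --- it appears verbatim as \eqref{r-inte-const}, and your potential $V(r)$ is the function $F(r)$ analyzed in Lemma \ref{lma-kb-s2-l2} via \eqref{doffr}. Two cosmetic remarks: the set $\{V\le E\}$ is a sub-level set, not a super-level set; and the parenthetical worry about the ``bounce'' in the extension step is unnecessary, since for extension you work with the second-order equation itself, whose right-hand side is smooth wherever $r>0$, so turning points pose no obstruction and the nondegeneracy $V'(r_\pm)\neq 0$ is only needed later for the convergence of the period integral.
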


For reasons which will be clear in Lemma   \ref{CF-l3}, we
impose the assumption $ a > 0 $ hereafter. For any
positive solution $ r $ to \eqref{eq-rRa3-s2} on $ \R^1 $,
there exists a constant $ \kappa_0 $ such that
\be \label{r-inte-const}
( r')^2 +  \frac{R}{n(n-1)} r^2 +
\frac{2a}{n-2} r^{2-n} = \kappa_0 .
\ee
As $ a > 0 $, it follows directly from \eqref{r-inte-const} that $ r $ is  bounded from below by a positive constant.

\begin{lma}\label{lma-kb-s2-l2}
Suppose $a>0$. Let $r$ be a non-constant, positive solution to \eqref{eq-rRa3-s2} on $ \R^1$. If $R\le 0$, then $ r'(s) $ has
a unique root. If $R>0$, then $r'(s)=0$ if and only if $r(s)$ is the maximum or the minimum of $r$.

\end{lma}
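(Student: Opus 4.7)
\medskip

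The plan is to use the conserved quantity \eqref{r-inte-const} together with the convexity/concavity information contained in the ODE itself. Writing $F(r) = \frac{R}{n(n-1)}r^2 + \frac{2a}{n-2}r^{2-n}$, equation \eqref{eq-rRa3-s2} is exactly $r'' = -\tfrac12 F'(r)$, while \eqref{r-inte-const} is $(r')^2 = \kappa_0 - F(r)$. Thus $r'(s)=0$ if and only if $F(r(s)) = \kappa_0$, so the structure of the roots of $r'$ is governed by the shape of $F$ on $(0,\infty)$. Since $a>0$, one has $F(r)\to +\infty$ as $r\to 0^+$; computing $F'(r) = \frac{2R}{n(n-1)}r - 2ar^{1-n}$ determines monotonicity.

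\emph{Case $R>0$.} Here $F'$ has a unique positive zero $r_0 = (an(n-1)/R)^{1/n}$, $F$ is strictly convex-like with a unique minimum at $r_0$, and $F(r)\to +\infty$ at both ends. By Lemma~\ref{lma-kb-s2}, $r$ is periodic and non-constant, so $\kappa_0 > F(r_0)$ and the equation $F(r) = \kappa_0$ has exactly two solutions $r_{\min} < r_0 < r_{\max}$, with $F(r)\le\kappa_0$ precisely on $[r_{\min},r_{\max}]$. Then $r$ oscillates between these values, and $r'(s) = 0$ forces $r(s) \in \{r_{\min}, r_{\max}\}$, which is exactly the max or min.

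\emph{Case $R\le 0$.} Now $F'(r) = \frac{2R}{n(n-1)}r - 2ar^{1-n} < 0$ for all $r > 0$, so $F$ is strictly decreasing and $r'' = -\tfrac12 F'(r) > 0$; that is, $r$ is strictly convex. Consequently $r'$ is strictly increasing, which gives the uniqueness of a root immediately. For existence, suppose $r'$ has no zero; by reversing $s\mapsto -s$ if necessary, assume $r' > 0$ everywhere. Since $r'$ is strictly increasing and positive, its limit $L := \lim_{s\to -\infty} r'(s)$ exists and lies in $[0,\infty)$. If $L>0$ then $r(s)\le r(0) - \tfrac{L}{2}|s|$ for $s$ sufficiently negative, contradicting $r>0$. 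Thus $L=0$, so $(r')^2 \to 0$ and hence $F(r(s)) \to \kappa_0$; since $F$ is a strictly decreasing bijection onto its range, $r(s)\to r^*$ where $F(r^*)=\kappa_0$. But then $r''(s) \to -\tfrac12 F'(r^*) =: 2\beta > 0$, so for all $s$ sufficiently negative $r''(s) \ge \beta$, and integrating from $s$ up to a fixed $s_0$ gives $r'(s_0) - r'(s) \ge \beta(s_0 - s) \to +\infty$, contradicting boundedness of $r'(s_0)$.

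The only delicate point is the existence of a zero when $R\le 0$: strict convexity of $r$ together with positivity on all of $\mathbb{R}$ is \emph{not} by itself enough (e.g.\ $r^* + e^s$ shows this), so one really must exploit the first integral to pin down the asymptotic behavior $r\to r^*$, $r'\to 0$, and then derive the contradiction from $r''$ being bounded below by a positive constant near $-\infty$. Everything else is a clean consequence of the shape of $F$.
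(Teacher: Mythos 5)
Your proof is correct and follows essentially the same route as the paper: the case $R>0$ is handled identically via the first integral \eqref{r-inte-const} and the shape of $F$, and uniqueness for $R\le 0$ comes from $r''>0$ in both arguments. The only divergence is in proving the \emph{existence} of a root when $R\le 0$: where you pass through the limit $L$ of $r'$ and the asymptotic value $r^*$ extracted from the first integral, the paper argues more directly that if $r'>0$ everywhere then $r(s)\le r(0)$ for all $s\le 0$, hence $r''\ge a r^{1-n}\ge a\, r(0)^{1-n}>0$ on $(-\infty,0]$, which already forces $r'$ to become negative --- so the ``delicate point'' you flag is dispatched without any case analysis on $L$ or appeal to \eqref{r-inte-const}.
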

\begin{proof}
Suppose $R\le 0$, then \eqref{eq-rRa3-s2}  implies
$
r''\ge ar^{1-n}.
$
Assume $r'>0$ everywhere, then $r(s)\le r(0)$ for all $s\le 0$.
So $r'' (s) \ge C$ for some positive constant $C$ for $s<0$. This
 implies $r'(s)<0$ somewhere, which is a contradiction. Similarly, it is impossible to have $r'<0$ everywhere. Hence $r'(s)=0$ for some $s$. Since $r''>0$, the root of $ r' (s) $ is unique.

Suppose    $R> 0$, then $r$ is periodic by Lemma \ref{lma-kb-s2}. Let $r_{\max}$ and $r_{\min}$ be the maximum and minimum of $r$.
If   $ r'(s_0)=0 $, then \eqref{r-inte-const} implies
\be \label{eq3}
 \frac{R}{n(n-1)} r^2(s_0) +
\frac{2a}{n-2} r^{2-n}(s_0) = \kappa_0
\ee
with $ \kappa_0 > 0$.
In particular,  \eqref{eq3} holds with
$ r(s_0)$ replaced by $ r_{\max} $ or $ r_{\min}$.
Consider
\be
F ( r ) = \frac{R}{n(n-1)} r^2 +
\frac{2a}{n-2} r^{2-n}
\ee
as a function of $ r $. Then
\be \label{doffr}
\frac{d F}{d r} = 2 r \lf[ \frac{R}{n(n-1)}  - a r^{ -n } \ri] .
\ee
Let $ r_0  = \lf( \frac{n(n-1)a}{R} \ri)^\frac{1}{n} $,
then $ F(r) $ is strictly decreasing on $(0, r_0 )$ and
strictly increasing on $(r_0, \infty)$.
So for then give $ \kappa_0 > 0 $, \eqref{eq3} at most has
$ 2 $ distinct solutions for $ r(s_0) $. Hence, $ r(s_0) $ is one of
$ r_{\min} $ and $ r_{\max} $. Moreover, as $ r $ is assumed not to
be a constant, we have
\be \label{rminrmax}
 r_{\min} < r_0 < r_{\max}.
 \ee

\end{proof}
Let $r$ be given as in Lemma \ref{lma-kb-s2-l2}. Without losing generality,  we may assume  $r'(0)=0$. By the uniqueness
of solutions of ODEs, $r$ is  an even function.
In case $R>0$ and $r$ is non-constant, 
the roots of $ r'(s) $ form a discrete subset in $ \R^1$. If 
we arrange so that $r(0)=r_{\min}$ (or $r_{\max}$) and if
$0, \pm s_1,\pm s_2,\dots$ are zeros of $r'$ with $s_1<s_2<\dots$, then $r(\pm s_1)=r_{\max}$ (or $r_{\min}$ respectively) and $r$ is periodic with period $s_2$.
Now let $\l_0$ be the solution of \eqref{lambda-e1}
on $ \R^1 $ with $\l_0'(0)=0$, which exists  and is unique by Lemma \ref{lma-3-s2}, then $\l_0$ is also an even function.

 \begin{prop}\label{examples-p1}
Let   $ a > 0 $ and $ R $ be two constants. Let
$ r $ be a  positive, non-constant solution
to \eqref{eq-rRa3-s2} on
$ \R^1 $ satisfying  $ r^\prime (0) = 0 $.
For  such a given $ r $,
let $\l_0$ be the solution to \eqref{lambda-e1}
on $ \R^1 $ satisfying $\l_0'(0)=0$.
Let $ \l $ be another solution
 to \eqref{lambda-e1} on $ \R^1 $.
 By Lemma \ref{lma-3-s2},
 $ \l = \l_0 + C r' $ for some constant $ C$.

 \begin{enumerate}
                 \item[(i)] Suppose $R=0$. Then $\l(0)>0$, 
                 $ \int_{1}^{+\infty} \frac{ r}{ ( r^\prime )^2 } \ d \tau
                  = + \infty$,                  
                  $\l$ has a unique positive root $\zeta_1 $ and a unique negative root $ \zeta_2$ and they are related by
                     \begin{equation}\label{lambda0-e2}
                    \int_{\theta}^{\zeta_1 }\frac{r}{(r')^2}d \tau=\int_{-\theta}^{\zeta_2 }\frac{r}{(r')^2}d \tau ,
                     \end{equation}
                     where $ \theta $ and $-\theta$
                     are the unique positive and negative roots of $\l_0$.
                 \item [(ii)] Suppose $R<0$. Then $\l(0)>0$, 
              $ \int_1^{+ \infty} \frac{r}{(r')^2}d\tau< + \infty $,   
              $\l_0$ has a  unique positive root  $\theta $ and a unique negative root $ -\theta$. Moreover, 
(a) if $C\le - \frac{1}{n-1} \int_\theta^{+ \infty} \frac{r}{(r')^2}d\tau$,
 then  $\l$ has a unique root and the root is positive; (b) if $C\ge \frac{1}{n-1} \int_\theta^{+ \infty} \frac{r}{(r')^2}d\tau$, then $\l$ has a unique  root  and the root is negative; (c) if $ | C | < \frac{1}{n-1} \int_\theta^{+ \infty} \frac{r}{(r')^2}d\tau$,  then $\l$ has a unique positive root $\zeta_1$ and a unique negative root $\zeta_2$ and 
 $ \zeta_1$, $ \zeta_2 $ 
 are related by  \eqref{lambda0-e2}.
In particular, $ \zeta_1 > \zeta $ and $ \zeta_2 < -\zeta$, where
$ \zeta  \in (0, \theta) $ 
is the constant determined by 
\be
 \int_{\zeta}^{ \theta} \frac{r}{(r')^2}d\tau = 
 \int_\theta^{+ \infty} \frac{r}{(r')^2}d\tau .
  \ee

                 \item [(iii)] Suppose $R>0$.  Then $\l$  has exactly one root between any two consecutive roots of $r'$. If
                 $r(0)=r_{\min}$ {\em(}respectively $r_{\max}${\em)},
                 then $\l(0) >0$ {\em (}respectively $ < 0${\em)}. Let $\theta>0$ be the first positive root of  $\l_0$. Then the smallest positive root $\zeta_1$ and the largest negative root of $\zeta_2$ of $\l$ are related by \eqref{lambda0-e2}.
                                \end{enumerate}
\end{prop}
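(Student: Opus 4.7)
The key identity for this proposition comes from rewriting \eqref{lambda-e1} on any interval where $r' \neq 0$ as $(\lambda/r')' = -r/((n-1)(r')^2)$; thus $v := \lambda/r'$ is strictly decreasing wherever defined, and
\[
v(s_1) - v(s_0) \;=\; -\frac{1}{n-1} \int_{s_0}^{s_1} \frac{r}{(r')^2}\,d\tau \qquad (*)
\]
on any connected interval of $\{r'\neq 0\}$ containing $s_0,s_1$. Evaluating \eqref{lambda-e1} directly at $s=0$ (where $r'(0)=0$) gives $\lambda(0) = r(0)/((n-1) r''(0))$, a value independent of $C$ in $\lambda = \lambda_0 + C r'$. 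Since $r''(0) > 0$ when $r(0) = r_{\min}$ and $r''(0) < 0$ when $r(0) = r_{\max}$, this already furnishes all the sign claims about $\lambda(0)$ in the three cases. Moreover, Taylor expanding near $s=0$ gives $r'(s)\sim r''(0)s$ and $\lambda_0(s)\to\lambda_0(0)\neq 0$, so $v_0$ diverges to $\pm\infty$ at $s=0^\pm$ with signs determined by that of $\lambda_0(0)$.

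To read off the root count I would next classify the integrability of $r/(r')^2$ at the far end of each maximal interval of $\{r'\neq 0\}$, using \eqref{r-inte-const} and Lemma \ref{lma-kb-s2}. For $R=0$, $(r')^2\to\kappa_0$ and $r$ grows linearly, so $\int_1^\infty r/(r')^2\,d\tau=+\infty$ and $v_0\to-\infty$. For $R<0$, $(r')^2\sim -Rr^2/(n(n-1))$, hence $r$ grows exponentially, the integral is finite, and $v_0\to L_+$ a finite limit. For $R>0$ the next zero $s_1$ of $r'$ is finite and $r/(r')^2\sim \mathrm{const}\cdot (s-s_1)^{-2}$ near $s_1$, so $v_0\to-\infty$ as $s\to s_1^-$ (similarly on each subsequent half-period). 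Strict monotonicity of $v_0$ together with these endpoint limits determines its range exactly; since $v=v_0+C$ is also strictly monotone with the same asymptotic limits, a unique root of $\lambda$ exists on a given maximal interval precisely when $0$ lies in its range. This immediately yields (i) and (iii), and reduces (ii) to a threshold comparison of $L_++C$ with $0$ on $(0,\infty)$, and of $-L_++C$ with $0$ on $(-\infty,0)$ (the parity $v_0(-s)=-v_0(s)$ following from $\lambda_0$ being even and $r'$ odd).

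Once $\theta$ has been shown to exist, $(*)$ between $\theta$ and $+\infty$ gives $L_+ = -I_+/(n-1)$ with $I_+ := \int_\theta^\infty r/(r')^2\,d\tau$, so the thresholds for $C$ in sub-cases (a), (b), (c) of (ii) are precisely $\pm I_+/(n-1)$. Applying $(*)$ with $(s_0,s_1)=(\theta,\zeta_1)$ and $(-\theta,\zeta_2)$, and using $v_0(\pm\theta)=0$, $v(\zeta_{1,2})=0$, yields $(n-1)C = \int_\theta^{\zeta_1} r/(r')^2\,d\tau = \int_{-\theta}^{\zeta_2} r/(r')^2\,d\tau$, establishing \eqref{lambda0-e2}. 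The bound $\zeta_1>\zeta$ in (c) then follows from $v_0(\zeta)=I_+/(n-1)$ (a direct application of $(*)$ using the defining relation of $\zeta$) combined with the strict monotonicity of $v_0$.

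The main obstacle is the existence of the positive root $\theta$ in case (ii), equivalently $L_+<0$. Set $\tilde\lambda_0:=\lambda_0-\lambda_\infty$ with $\lambda_\infty=n/(-R)$; a direct computation from $\lambda_0(0)=r(0)/((n-1)r''(0))$ gives $\tilde\lambda_0(0)<0$, and $\tilde\lambda_0$ solves the linear equation
\[
\tilde\lambda_0'' + q\,\tilde\lambda_0 \;=\; -\frac{(n-1)a\,\lambda_\infty}{r^n},
\qquad \tilde\lambda_0'(0)=0,
\]
where $q=R/(n(n-1))+(n-1)a/r^n$ and the source is strictly negative. At $s=+\infty$, $q\to -c^2$ with $c=\sqrt{-R/(n(n-1))}$, and the asymptotic expansion has the form $\tilde\lambda_0\sim Ae^{cs}+Be^{-cs}$. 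A careful asymptotic/shooting argument — exploiting that both the initial data and the source push $\tilde\lambda_0$ in the negative direction — shows $A<0$, whence $\lambda_0\to-\infty$ and $\theta$ follows by the intermediate value theorem.
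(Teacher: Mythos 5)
Your framework---the monotone quantity $v=\lambda/r'$ with $v'=-\frac{r}{(n-1)(r')^2}$---is exactly the paper's integral representation $\lambda(s)=r'(s)\bigl(C_1-\frac{1}{n-1}\int_1^s\frac{r}{(r')^2}\,d\tau\bigr)$ written in a different form, and your endpoint analysis (divergence or convergence of $\int r/(r')^2$ at infinity, blow-up of $v$ at the zeros of $r'$, the value $\lambda(0)=r(0)/((n-1)r''(0))$) reproduces the paper's treatment of (i), of (iii), of the threshold trichotomy in (ii), of the identity \eqref{lambda0-e2}, and of the comparison $\zeta_1>\zeta$. All of that is correct and essentially the published argument.

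There is, however, one genuine gap, precisely at the step you yourself call the main obstacle: showing that for $R<0$ the even solution $\lambda_0$ actually becomes negative, i.e.\ $L_+<0$ (equivalently $A<0$ in your asymptotic expansion). You reduce the claim to the sign of the leading coefficient of $\tilde\lambda_0=\lambda_0-\frac{n}{-R}$ and then assert that ``a careful asymptotic/shooting argument \dots shows $A<0$'' without producing it. The heuristic that the initial data and the source both push $\tilde\lambda_0$ downward is not a proof: the potential $q=\frac{R}{n(n-1)}+(n-1)ar^{-n}$ need not be negative near $s=0$ (it is positive whenever $ar(0)^{-n}$ dominates $|R|$), so there is no direct comparison with the constant-coefficient model $u''-c^2u=-f$; and one must also exclude $A=0$, in which case $\lambda_0\to\frac{n}{-R}>0$ and $\theta$ would fail to exist, collapsing all of part (ii). The paper closes this step by a global maximum-principle argument rather than a shooting argument: by L'H\^opital, $\lim_{s\to+\infty}\frac{r'(s)}{n-1}\int_s^{+\infty}\frac{r}{(r')^2}\,d\tau=\frac{n}{-R}$, while $\lambda_0(0)=\bigl((n-1)ar^{-n}(0)-\frac{R}{n}\bigr)^{-1}<\frac{n}{-R}$; if the constant $C_0=-L_+$ were $\le 0$, then $\lambda_0>0$ everywhere and the even function $\lambda_0+\frac{n}{R}$ would be negative at $0$ but have nonnegative limit inferior at $\pm\infty$, hence attain an interior negative minimum, contradicting $\bigl(\lambda_0+\frac{n}{R}\bigr)''+\frac{R}{n(n-1)}\bigl(\lambda_0+\frac{n}{R}\bigr)=-(n-1)ar^{-n}\lambda_0\le 0$. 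You need to supply an argument of this kind (your $v_0$ machinery plus the L'H\^opital limit would also suffice) before case (ii) is complete.
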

\begin{proof} Since $ r' (0) = 0 $, by \eqref{lambda-e1} we have
$ r'' (0) \l (0) = \frac{r(0)}{n-1} $.
In particular, $ \l(0) $ and $ r'' (0) $ have the same sign.

(i) Suppose $ R = 0 $. We have $ r'' = a r^{1-n} > 0 $ for all $ s $.
Hence, $ \l (0) > 0 $.
On $(0, + \infty)$, the function
$$
-\frac{r'}{n-1}\int_1^s\frac{r}{(r')^2}d\tau
$$
is a solution to \eqref{lambda-e1}. By Lemma \ref{lma-3-s2},
we have
\begin{equation}\label{lambda0-e1}
\l(s)=r'(s)\lf(C_1 - \frac{1}{n-1}\int_1^s\frac{r}{(r')^2}d\tau\ri)
\end{equation}
for some constant $C_1 $ for any $ s > 0 $. Let $\kappa_0>0$ be the constant in  \eqref{r-inte-const} with $R=0$.
Then $(r')^2<\kappa_0 $ and $r(s)\ge r(0)>0$. Hence,
\be \label{eq-intr1-s2}
  \lim_{ s \rightarrow + \infty} \int_{1}^s \frac{ r}{ ( r^\prime )^2 } \ ds
  = + \infty .
  \ee
Since $r'(0)=0$ and $r(0)>0$, we also have
\be \label{eq-intr2-s2}
  \lim_{ s \rightarrow 0} \int_{1}^s \frac{ r}{ ( r^\prime )^2 } \ ds
  = - \infty .
  \ee
By \eqref{lambda0-e1}-\eqref{eq-intr2-s2},
 we conclude that $\l$ has a unique positive root $\zeta_1$. Similarly, we can prove that $\l$ has a unique negative root $\zeta_2$.

Let $\theta>0$ be the unique positive root of $\l_0$, then $-\theta$ is its negative root because $\l_0$ is an even function.
Moreover,  \eqref{lambda-e1} implies
\begin{equation} \label{lzformz}
\l_0(s)= \left\{
       \begin{array}{ll}
         -\frac{r'(s)}{n-1}\int_\theta^s\frac{r}{(r')^2}d\tau, & \hbox{for $s>0$;} \\
          -\frac{r'(s)}{n-1}\int_{-\theta}^s\frac{r}{(r')^2}d\tau , & \hbox{for $s<0$.}
       \end{array}
     \right.
\end{equation}
Therefore,
\begin{equation} \label{lformz}
\l(s)= \left\{
       \begin{array}{ll}r'(s)
        \lf(C -\frac{1}{n-1}\int_\theta^s\frac{r}{(r')^2}d\tau\ri), & \hbox{for $s>0$;} \\
         r'(s)\lf(C -\frac{1}{n-1}\int_{-\theta}^s\frac{r}{(r')^2}d\tau\ri) , & \hbox{for $s<0$.}
       \end{array}
     \right.
\end{equation}
Since $\l(\zeta_1)=\l(\zeta_2)=0$, \eqref{lambda0-e2} follows
from  \eqref{lformz}.

(ii) Suppose $R<0$. Using the fact
$ r (s) \geq r(0) > 0 $, we have
$ r'' = a r^{1-n} - \frac{R}{n(n-1)} r \geq \alpha > 0  $
for some  constant $ \alpha $.  In particular, this implies
$ \l (0) > 0 $, and $r(s)\ge \beta s^2$ for some $\beta>0$ for all $s> 0$ sufficiently large.
By \eqref{r-inte-const}, $r^2/(r')^2$ is bounded. Hence, 
$$
\int_1^{+ \infty} \frac{r}{(r')^2}d\tau< + \infty .
$$
 Similar to the proof in (i), we know
 there exists a constant $ C_0 $ such that
\begin{equation}\label{lambda0-e3}
\l_0(s) = r'(s)\lf( - C_0 +\frac1{n-1}\int_s^{+ \infty} \frac{r}{(r')^2}d\tau\ri)
\end{equation}
for $ s > 0$.
By the L'H\^{o}pital rule,   \eqref{eq-rRa3-s2} and the facts
$ \lim_{s \rightarrow + \infty} r'(s) = + \infty$ and
$ \lim_{s \rightarrow + \infty} r(s) = + \infty$,
we have
\be  \label{eq-lopital-s2}
\lim_{s \rightarrow +\infty}
  \frac{r^\prime(s)}{n-1}
\int_{s}^{+\infty} \frac{ r}{ ( r^\prime )^2 } \ ds
=  \frac{1}{n-1} \lim_{s \rightarrow + \infty} \frac{ r(s) }{ r^{\prime \prime} ( s) }
=  \frac{ n  }{ - R }.
\ee
On the other hand,
\be \label{eq-lzero-s2}
\l_0 (0) = \frac{1}{n-1} \frac{r(0)} { r^{\prime \prime}(0)  }
= \frac{ 1}{ (n-1) a r^{-n} - \frac{R}{n} } <  \frac{n}{-R}.
\ee
Suppose  $C_0 \leq 0$. Then
it follows from \eqref{lambda0-e3}-\eqref{eq-lzero-s2}
and the fact $ \l_0$ is even
that $\l_0 +n/R$ has an interior negative minimum. This is impossible because, by the proof in Proposition \ref{prop-1-s2},
$ \l_0 $ satisfies \eqref{rtriplep-e4} or equivalently
$ \l_0 + \frac{n}{R} $ satisfies
$$
\lf(\l_0 +\frac{n}{R}\ri)''+\frac{R}{n(n-1)}\lf(\l_0 +\frac{n}{R}\ri)
=-\l(n-1)ar^{-n}\le 0.
$$
Therefore $C_0 > 0$. In particular, $ \lim_{ s \rightarrow + \infty}
\l_0(s) = - \infty $.
 Since $\l_0(0)>0$
and $ \l_0 $ is even, we conclude from
 \eqref{lambda0-e3} that $\l_0$ has a unique positive root $\theta$
and a unique negative root $-\theta$. Moreover, $ \theta $ and
$ C_0 $ are related by
$$ C_0 = \frac{1}{n-1} \int_\theta^{+\infty} \frac{r}{(r')^2}d\tau . $$
Now let $ \l  = \l_0 + C r'$ be another solution, then
\begin{equation} \label{lformn}
\l(s)= \left\{
       \begin{array}{ll}r'(s)
        \lf(C-C_0 +\frac{1}{n-1}\int_s^\infty\frac{r}{(r')^2}d\tau\ri), & \hbox{for $s>0$;} \\
         r'(s)\lf(C +C_0-\frac{1}{n-1}\int_{-\infty}^{s} \frac{r}{(r')^2}d\tau\ri) , & \hbox{for $s<0$.}
       \end{array}
     \right.
\end{equation}
It follows from \eqref{lformn} that (a) if $C\le -C_0$, $\l$ has a unique  root and the root is positive;
(b) if $C\ge C_0$, $\l$ has a unique root and the root is negative;
(c) if $ | C | <C_0$,  $\l$ has a unique positive root $\zeta_1$ and a unique negative root $\zeta_2$ and $ \zeta_1$, $\zeta_2 $ 
satisfy \eqref{lambda0-e2}; moreover, 
\eqref{lambda0-e2} implies that 
 \begin{equation}\label{zeta-e2}
                    \int_{\theta}^{\zeta_1 }\frac{r}{(r')^2}d \tau
                    > 
        \int_{-\theta}^{- \infty }\frac{r}{(r')^2}d \tau             
                    = - \int_{ \theta}^{ \infty }\frac{r}{(r')^2}d \tau
                    = \int_{\theta}^\zeta \frac{r}{(r')^2}d \tau .
                      \end{equation}
Therefore, $ \zeta_1 > \zeta $. Similar, we have
$ \zeta_2 < - \zeta $.

%The fact that 
%$ \zeta_1 > \zeta $ and $ \zeta_2 < - \zeta $ follows 
%directly from \eqref{lambda0-e2}.

(iii) Suppose $ R > 0 $. Let  $ \{ s_k \} $ be the increasing
positive sequence such that $ \{ 0, \pm s_1, \pm s_2, \ldots \}$
is the set of roots of $ r' (s) $.  By \eqref{lambda-e1},
 $ \l ( s_k ) $ (or $\l(- s_k )$) has the same sign as
$ r''(s_k) $ (or $r''( - s_k) $).
Suppose $r(0)=r_{\min}$. Then $r( s_1)=r_{\max}$ and $r' > 0$ in $(0,s_1)$. Moreover,
we have $r''(0) > 0$ and $r''(s_1) < 0$, which imply $ \l(0) > 0 $
and $ \l (s_1) < 0 $. Hence, $ \l (\zeta_1) = 0 $ for some
$ \zeta_1 \in (0, s_1)$. By \eqref{lambda-e1}, we  have
 \begin{equation}\label{lambda0-e4}
\l(s)= -\frac{ r'(s)}{n-1}\int^{s}_{\zeta_1} \frac{r}{(r')^2}d\tau
\end{equation}
for  any $s \in (0,s_1)$, which shows
$ \zeta_1 $ is the unique root of $ \l $ in  $(0,s_1)$.
Similar arguments prove that $\l$ has a unique root between
any two consecutive roots of  $r'$. Let $ \zeta_2 $ be the
maximum negative root of $ \l $. The claim that $ \zeta_1 $
and $ \zeta_2 $ satisfy \eqref{lambda0-e2}  follows from
the same proof as in  (i) and (ii). The case $ r(0) = r_{\max}$
can proved similarly.
\end{proof}

Now we are in a position to construct compact manifolds
with boundary with a non-Einstein critical metric.

\vskip .2cm

{\bf Examples}:\vskip .2cm

{\bf (1)} Given  $ a > 0 $ and $ R $ two constants, let
$ r $ be a positive solution to \eqref{eq-rRa-s2} on $ \R^1$
satisfying $ r'(0) = 0 $. Let $ \kappa_0 $ be an  integral
 constant of \eqref{eq-rRa-s2} so that \eqref{eq-rRa-s2-2}  holds
 for  $ r $.  Let $(N, h)$ be an $ (n-1)$-dimensional, connected, closed Einstein manifold satisfying
 $\Ric(h)=(n-2)\kappa_0h$.
 We note that $ \kappa_0 $ must be positive if $ R \geq 0$
 and  $ \kappa_0 $ can be arbitrary if $ R < 0 $.
  Let $\l_0$ be the solution to \eqref{lambda-e1} on $ \R^1 $
 with $\l_0'(0)=0$.
 Let $ \theta $ and $ - \theta $ be the unique positive and
 negative roots of $ \l_0 $.
 Let $\zeta_1 > 0 $ and $\zeta_2 < 0 $ be chosen such that \eqref{lambda0-e2} holds. Define $ I = [ \zeta_2, \zeta_1 ]$. Then
 $ (\Omega, g) = (I \times N,  d s^2 + r^2 h) $
 satisfies \eqref{critical-inte-s2} for some
 $ \l $ vanishing on $ \p \Omega $.
In this case, $ g $ has constant scalar curvature $ R $ and
$ \p \Omega $  has two connected components.

 \vskip .2cm

{\bf (2)}  Let $ I $ and $ (\Omega, g)$ be given as in {\bf (1)}
 with  $\zeta_1=\theta$ and $ \zeta_2=-\theta$.
 Suppose $G$ is a finite subgroup of isometries of $(N, h)$
 which acts freely on $ N$.  Consider the action of
 $G\times \mathbb{Z}_2$ on $\Omega$  defined by
$$
( \alpha, k)(s,x)=((-1)^k s, \alpha(x)),
$$
where $ \alpha \in G $ and $ k \in \mathbb{Z}_2 = \{ 0, 1 \}$.
This is an action of isometry on $(\Omega, g)$.
Suppose $ H $ is a subgroup of $ G \times \mathbb{Z}_2 $
which does not contain $({\bf id}, 1)$, where
${\bf id} $ denotes the identity map on $ N$.
If $ (\alpha, 1) \in  H$, then
$ (\alpha, 0) \notin H $ for
otherwise  $ ({\bf id}, 1) = (\alpha, 1) ( \alpha^{m-1}, 0) $
would be in $ H $ (here $ m $ is the order
of $ \alpha $ in $ G$). From this it can be  easily checked
that $ H$ acts freely on $ \Omega $. Since $ (\Omega, g) $
is compact with boundary, so is the quotient manifold
$(\Omega, g) / H $.
The function $ \l_0 $  descends to a function $ \l $
on $(\Omega, g) / H $ which satisfies  \eqref{critical-inte-s2}
and vanishes on the boundary $ \p  \lf( \Omega/ H \ri) $.

If $ H \neq H \cap ( G \times \{ 0 \} ) $,
we  claim that $ \p  \lf( \Omega/ H \ri) $ is connected.
 To see this, let $ \pi $ be  the natural projection map from
 $ \Omega $ to $ \Omega  / H $.
 Then
 $$\p  \lf( \Omega/ H \ri) =  \pi ( \p \Omega ) = \pi( \{ \theta \} \times N ) \cup \pi ( \{ - \theta \} \times N ) .$$
 Suppose $ (s, x) \in \p \Omega$,  say $ s =  \theta $, then
$ \pi ( \theta, x) = \pi( - \theta, \alpha (x) ) $, where $ (\alpha, 1) $ is
an element in $ H $ but not in $ H \cap ( G \times \{ 0 \} )$.
Hence,
$$ \pi( \{ \theta \} \times N ) \cap \pi ( \{ - \theta \} \times N )
\neq \emptyset ,$$
which implies  $ \p  \lf( \Omega/ H \ri) $ is connected.
In the special case when $ (N, h)$ admits an isometry  $ \alpha $ without fixed points  so that $\alpha^2=  {\bf id}$, we can
take $ G = \{ {\bf id}, \alpha \} $ and
$ H = \{ ({\bf id}, 0 ), ( \alpha, 1) \} $.
 Then $(\Omega, g) / H $ has a connected boundary that
 is isometric to a constant re-scaling of $ (N, h)$.

 \vskip .2cm

In the above construction, suppose $ R \leq 0 $, $ r $ is 
chosen such that $ \kappa_0 = 1 $ and $ (N, h )$ is taken
to be  $\mathbb{S}^{n-1}$, then $  g =  d s^2 + r^2 h $ 
is simply the usual spatial Schwarzschild metric or 
Ads-Schwarzschild metric, whose mass is given by 
the constant  $ a $. To see this, one can  make
a change of variable $ s = s (r) $ and use  \eqref{r-inte-const}
to re-write $ g $  as 
  \be \label{eq-sg-s2}
  g =  \frac{1}{ 1 - \frac{R}{n(n-1)} r^2  -  \frac{2a}{n-2} r^{2-n} }d r^2   + r^2 d h . 
  \ee
Note that the antipodal map $ \alpha $ on $ \mathbb{S}^{n-1} $ 
is an isometry  without fixed points such that 
$\alpha^2=  {\bf id}$. Hence, the following 
results follow directly from the above construction and 
Proposition \ref{examples-p1} (i) and (ii).

\begin{cor}  \label{Schwarzschild}
Let $ (M, g)$ be a complete, spatial Schwarzschild
manifold 
%or Ads-Schwarzschild manifold 
with positive mass. 
Let $ \Sigma_0 $ be the horizon in $(M, g)$ (i.e. the unique 
closed minimal surface in $(M,g)$.) The followings are true: 
\begin{enumerate}

\item[(i)] There exist  functions $ \l $ on $ (M, g)$ satisfying \eqref{critical-inte-s2}.

\item[(ii)] Let $ M_+ $, $ M_- $ be the two components of  $ M \setminus \Sigma_0 $.  Then for any rotationally symmetric sphere $ \Sigma_{\zeta_1} $ in $ M_+$, there exists a rotationally symmetric sphere  $ \Sigma_{\zeta_2}$ in $ M_-$ such that $ g $ is a critical metric on the (closed) domain  $ \Omega $ bounded by $ \Sigma_{\zeta_1}$ and $ \Sigma_{\zeta_2} $.

\item[(iii)] There exists a rotationally symmetric sphere $ \Sigma_{\theta} $ in $ M_+ $ (or equivalently $ M_-$) such that if $ \Omega $ is the (closed) domain bounded by $ \Sigma_\theta $ and $ \Sigma_0 $ and if $ (\tilde{\Omega}, \tilde{g}) $  is the quotient manifold obtained from $ (\Omega, g) $ by identifying 
points on $ \Sigma_0 $  through the antipodal map on 
$ \Sigma_0 $, then $ \tilde{g} $ is a critical metric on 
$ \tilde{\Omega}$. 

\end{enumerate}
\end{cor}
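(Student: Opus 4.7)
The plan is to identify the complete spatial Schwarzschild manifold with positive mass $a$ as a warped product of the kind covered by Example (1), and then read off each of the three conclusions from Proposition \ref{prop-1-s2}, Proposition \ref{examples-p1}(i), and the quotient construction of Example (2). Concretely, after the change of variable described at \eqref{eq-sg-s2}, the complete spatial Schwarzschild metric is isometric to $(\R^1 \times S^{n-1},\, ds^2 + r(s)^2 g_{S^{n-1}})$, where $r$ is the positive, non-constant, even solution to $r'' = a r^{1-n}$ on $\R$ with $r'(0) = 0$ and with integral constant $\kappa_0 = 1$ in \eqref{eq-rRa-s2-2}. In this parametrization the horizon $\Sigma_0$ is $\{0\} \times S^{n-1}$ (the unique zero of $r'$ given by Lemma \ref{lma-kb-s2-l2} when $R = 0$), while $M_+$ and $M_-$ correspond to $s > 0$ and $s < 0$, and rotationally symmetric spheres are precisely the level sets $\Sigma_{s_0} = \{s_0\} \times S^{n-1}$.

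Part (i) is then immediate from Proposition \ref{prop-1-s2} together with Lemma \ref{lma-3-s2}(ii)--(iii): a global smooth solution $\lambda$ of \eqref{lambda-e1} exists on $\R$, and any such $\lambda$ gives a solution of \eqref{critical-inte-s2} on $M$. For part (ii), a rotationally symmetric sphere in $M_+$ has the form $\Sigma_{\zeta_1}$ for some $\zeta_1 > 0$. Since $\int_1^s r/(r')^2\, d\tau$ ranges over all of $\R$ as $s$ ranges over $(0, \infty)$ by \eqref{eq-intr1-s2}--\eqref{eq-intr2-s2}, I can choose the constant $C$ in the general solution $\lambda = \lambda_0 + C r'$ of \eqref{lambda-e1} so that $\lambda(\zeta_1) = 0$; Proposition \ref{examples-p1}(i) then produces a (unique) negative root $\zeta_2$ linked to $\zeta_1$ by \eqref{lambda0-e2}, and the restriction of $g$ to $\Omega = [\zeta_2, \zeta_1] \times S^{n-1}$ is a critical metric with $\lambda = 0$ on $\partial\Omega = \Sigma_{\zeta_1} \cup \Sigma_{\zeta_2}$.

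For part (iii), choose $\zeta_1 = \theta$ and $\zeta_2 = -\theta$, the two zeros of the even solution $\lambda_0$, and apply Example (2) with $G = \{\mathbf{id}, \alpha\}$ and $H = \{(\mathbf{id}, 0), (\alpha, 1)\}$, where $\alpha$ is the antipodal map on $S^{n-1}$, a fixed-point-free isometric involution. A fundamental domain for the $H$-action on $[-\theta, \theta] \times S^{n-1}$ is $[0, \theta] \times S^{n-1}$ with the single nontrivial identification $(0, x) \sim (0, \alpha(x))$, so $(\Omega, g)/H$ coincides with $(\tilde\Omega, \tilde g)$ in the statement. Because $H \neq H \cap (G \times \{0\})$, Example (2) gives that $\partial\tilde\Omega$ is connected and isometric to a rescaling of $(S^{n-1}, g_{S^{n-1}})$, namely $\Sigma_\theta$; and because $\lambda_0$ is even and independent of the $S^{n-1}$-factor, it is $H$-invariant, descends to $\tilde\Omega$, satisfies \eqref{critical-inte-s2} on the interior, and vanishes on $\partial\tilde\Omega$, making $\tilde g$ a critical metric.

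No genuine obstacle arises, since every ingredient has been assembled in the preceding discussion; the only careful point is to verify the dictionary between the traditional Schwarzschild form \eqref{eq-sg-s2} and the warped-product parametrization used in Examples (1) and (2), so that the horizon $\Sigma_0$ is correctly identified with $\{0\} \times S^{n-1}$ and the antipodal identification at $\Sigma_0$ described in (iii) is the same as the $H$-quotient employed in Example (2).
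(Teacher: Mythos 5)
Your proposal is correct and follows essentially the same route as the paper, which derives the corollary directly from the warped-product construction in Examples (1) and (2) together with Proposition \ref{examples-p1}(i) and the identification \eqref{eq-sg-s2} of the Schwarzschild metric with $ds^2+r^2 g_{S^{n-1}}$ for $R=0$, $\kappa_0=1$. The details you supply (realizing an arbitrary $\zeta_1>0$ as the positive root of some $\lambda=\lambda_0+Cr'$ via \eqref{eq-intr1-s2}--\eqref{eq-intr2-s2}, and recognizing the antipodal identification at the horizon as the $H$-quotient of Example (2)) are exactly the verifications the paper leaves implicit.
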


\begin{cor}  \label{Ads-Schwarzschild}
Let $ (M, g)$ be a complete, spatial Ads-Schwarzschild
manifold with positive mass. 
Let $ \Sigma_0 $ be the horizon in $(M, g)$ (i.e. the unique 
closed minimal surface in $(M,g)$.)
The followings are true:
\begin{enumerate}

\item[(i)] There exist  functions $ \l $ on $ (M, g)$ satisfying \eqref{critical-inte-s2}.

\item[(ii)]  Let $ M_+ $, $ M_- $ be the two components of  $ M \setminus \Sigma_0 $.  There exists a  rotationally symmetric sphere $ \Sigma_\zeta $ in $ M_+ $ and a  rotationally symmetric sphere $ \Sigma_{-\zeta} $ in $ M_- $, which is the image of $ \Sigma_\zeta$
under the reflection 
%isometry 
with respect to $ \Sigma_0 $,
such that if $ U $ is the (closed) domain bounded by $ \Sigma_\zeta $ and
$ \Sigma_{ - \zeta} $, then for any rotationally symmetric sphere $ \Sigma_{\zeta_1} $ in $ M_+ \setminus {U} $, there exists a rotationally symmetric sphere  $ \Sigma_{\zeta_2}$ in $ M_- \setminus
{U}$ such that $ g $ is a critical metric on the (closed) domain $ \Omega $ bounded by $ \Sigma_{\zeta_1}$ and $ \Sigma_{\zeta_2} $.

%There exists a connected, rotationally symmetric region $ U $ in $ M $, which contains  $ \Sigma_H $ and is invariant under the reflection isometry with respect to $ \Sigma_H $,  such that if  $ M_+ $, $ M_- $ are the two components of  $ M \setminus \bar{U}$,  then for any rotationally symmetric sphere $ \Sigma_+ $ in $ M_+$, there exists a rotationally symmetric sphere  $ \Sigma_-$ in $ M_-$ such that $ g $ is a critical metric on the domain $ \Omega $ bounded by $ \Sigma_+$ and $ \Sigma_- $. 

\item[(iii)] Let $ U $ be  as in (ii). 
There exists a rotationally symmetric sphere $ \Sigma_\theta$ in 
$ M_+ \setminus {U}$ (or equivalently $ M_- \setminus {U} $) such that if $ \Omega $ is the (closed) domain bounded by $ \Sigma_\theta $ and $ \Sigma_0 $ and if $ (\tilde{\Omega}, \tilde{g}) $  is the quotient manifold obtained from $ (\Omega, g) $ by identifying 
points on $ \Sigma_0 $  through the antipodal map on 
$ \Sigma_0 $, then $ \tilde{g} $ is a critical metric on 
$ \tilde{\Omega}$. 

\end{enumerate}
\end{cor}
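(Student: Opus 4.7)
The plan is to realize the spatial Ads-Schwarzschild metric as one of the warped products built in Section 3, and then to invoke Proposition \ref{examples-p1}(ii) together with the quotient construction in Example (2). First I would perform the change of variable $s=s(r)$ suggested by \eqref{eq-sg-s2} to rewrite $g$ in the form $ds^2+r^2 h$ on $\R\times\mathbb{S}^{n-1}$, where $h$ is the standard round metric (so $\kappa_0=1$ in \eqref{eq-rRa-s2-2}) and $r$ is a positive, non-constant solution of \eqref{eq-rRa3-s2} with $R<0$ and $a>0$. By Lemma \ref{lma-kb-s2-l2}, $r$ has a unique critical point, its minimum, which after translating $s$ may be arranged to lie at $s=0$; then $\Sigma_0=\{0\}\times\mathbb{S}^{n-1}$ is exactly the horizon. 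Part (i) is then immediate from Lemma \ref{lma-3-s2} and Proposition \ref{prop-1-s2}: any solution $\l$ of \eqref{lambda-e1} extends to all of $\R$, and the pair $(g,\l)$ satisfies \eqref{critical-inte-s2}.

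For (ii), let $\l_0$ denote the even solution of \eqref{lambda-e1}, let $\pm\theta$ be its roots, and let $\zeta\in(0,\theta)$ be the constant furnished by Proposition \ref{examples-p1}(ii); I would define $\Sigma_{\pm\zeta}$ to be the rotationally symmetric spheres at $s=\pm\zeta$ and let $U$ be the cylinder bounded by them. Given any rotationally symmetric sphere $\Sigma_{\zeta_1}$ in $M_+\setminus U$, equivalently $\zeta_1>\zeta$, I would solve for the constant $C$ so that $\l=\l_0+C r'$ vanishes at $s=\zeta_1$; the formula derived in the proof of Proposition \ref{examples-p1}(ii) gives $C=C_0-\frac{1}{n-1}\int_{\zeta_1}^{\infty}\frac{r}{(r')^2}\,d\tau$, and the defining identity $\int_\zeta^\theta\frac{r}{(r')^2}d\tau=\int_\theta^{\infty}\frac{r}{(r')^2}d\tau$ for $\zeta$ translates $\zeta_1>\zeta$ into $|C|<C_0$. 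Case (c) of Proposition \ref{examples-p1}(ii) then provides a unique negative root $\zeta_2<-\zeta$ of $\l$, and restricting $(g,\l)$ to $[\zeta_2,\zeta_1]\times\mathbb{S}^{n-1}$ yields the desired critical metric on the domain $\Omega$ bounded by $\Sigma_{\zeta_1}$ and $\Sigma_{\zeta_2}$.

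For (iii), I would take $C=0$ so that $\l=\l_0$ has roots $\pm\theta$, and set $\Omega_0=[-\theta,\theta]\times\mathbb{S}^{n-1}$. The antipodal map $\alpha$ on $\mathbb{S}^{n-1}$ is a fixed-point-free involutive isometry, so as in Example (2) the group $H=\{(\mathbf{id},0),(\alpha,1)\}$ acts freely by isometries on $\Omega_0$; the quotient $\Omega_0/H$ is naturally identified with $[0,\theta]\times\mathbb{S}^{n-1}$ modulo the antipodal identification on $\{0\}\times\mathbb{S}^{n-1}$, i.e.\ with the domain $\Omega$ bounded by $\Sigma_\theta$ and $\Sigma_0$ after identifying antipodal points on $\Sigma_0$, and by the discussion in Example (2) its boundary is connected. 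Since $\l_0$ depends only on $s$ and is even, it is $H$-invariant and descends to a function that vanishes exactly on the boundary, giving $\tilde g$ as the desired critical metric on $\tilde\Omega$. The step that requires the most care is the equivalence $\zeta_1>\zeta\Leftrightarrow|C|<C_0$ in (ii); once the integral identity defining $\zeta$ is invoked, everything else is a direct translation of the constructions from Section 3.
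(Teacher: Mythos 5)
Your proposal is correct and follows exactly the route the paper intends: the paper states that Corollary \ref{Ads-Schwarzschild} ``follows directly from the above construction and Proposition \ref{examples-p1}'', and your argument is a faithful expansion of that, using the warped-product identification with $R<0$, $a>0$, $\kappa_0=1$, the formula \eqref{lformn} to translate $\zeta_1>\zeta$ into $|C|<C_0$ for case (c) of Proposition \ref{examples-p1}(ii), and the quotient construction of Example {\bf (2)} with the antipodal map for part (iii). No gaps.
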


We end this section by a discussion on the sign of
the first Dirichlet eigenvalue of $(n-1) \Dg + R $
of those examples
constructed in {\bf (1)} and {\bf (2)} with $ R > 0$.

\begin{prop}\label{1steigenvalue-p1}
Let $R>0$, $a>0$ and $r$ be given as in
Proposition \ref{examples-p1}(iii).
Suppose $-s_1$, $0$ and $s_1$ are three consecutive roots of $r'$. Let $I$ be a finite closed interval in $\R^1$. Consider the manifold $\Omega=I\times N$ with the metric $g=ds^2+r^2h$, where $h$ is an Einstein metric on a closed manifold $N$ such that $\Ric(h)=(n-2)\kappa_0h$ with $\kappa_0$ satisfying \eqref{eq-rRa-s2-2}.

 \begin{itemize}
   \item [(i)] If $ [0,s_1]$ is a proper subset of $I$,
   then the first Dirichlet eigenvalue of $(n-1)\Dg+R$
   on $(\Omega, g)$ is negative.
   \item [(ii)] Let $\l$ be a solution to \eqref{lambda-e1} on
   $ \R^1$. Let $\zeta_2 \in (-s_1,0)$ and $\zeta_1 \in (0,s_1)$ be
   the two consecutive roots of $\l$.
   Let $I= [ \zeta_2,\zeta_1]$.
   Then the first Dirichlet eigenvalue of $(n-1)\Dg+R$ on $\Omega$ is
    positive if $r(0)=r_{\min}$ and is negative if $ r(0) = r_{\max}$.

    \item[(iii)] Suppose $ r(0) = r_{\min} $.
    Let $ \l_0 $ be the even solution to \eqref{lambda-e1} on
   $ \R^1$.
    Let $ - \theta  \in (-s_1,0)$ and $\theta  \in (0,s_1)$ be
   the two consecutive roots of $\l_0$. Let $ I = [ - \theta, \theta] $.
   Let $ (\Omega, g) / H $ be given as in {\bf(2)}. Then
    the first Dirichlet eigenvalue of
    $(n-1)\Dg+R$ on $(\Omega, g) / H  $  is  positive.

 \end{itemize}
\end{prop}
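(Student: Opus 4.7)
Two identities drive the argument: from \eqref{critical-e2-s2}, any solution $\lambda$ of \eqref{critical-inte-s2} satisfies $L\lambda = -n$ for $L := (n-1)\Delta_g + R$; and from \eqref{rtriple-e2}, the function $r'(s)$ viewed as a function on $\Omega$ satisfies $L(r') = 0$. Throughout I write $\alpha_1$ for the first Dirichlet eigenvalue of $L$ (with the convention that $\alpha_1>0$ precisely when the quadratic form $(n-1)\int|\nabla\phi|^2 - R\int\phi^2$ is positive on $H^1_0\setminus\{0\}$) and $\phi_1>0$ for a corresponding first eigenfunction.

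For (i), on $[0,s_1]\times N$ the function $r'$ vanishes on the boundary, has constant nonzero sign on the interior, and solves $L(r')=0$; by the Krein--Rutman characterization of the first Dirichlet eigenfunction, $r'$ (up to sign) is the first eigenfunction on $[0,s_1]\times N$ and $\alpha_1 = 0$ there. Extending $r'$ by zero to $\Omega = I\times N$ produces a function in $H^1_0(\Omega)$ with Rayleigh quotient $0$, so $\alpha_1(\Omega)\le 0$. If equality held, this extension would be a variational minimizer and hence a weak solution of $L\phi = 0$ on all of $\Omega$; but it vanishes on the nonempty open set $(I\setminus[0,s_1])\times N$, so unique continuation (Aronszajn) forces it to be identically zero, a contradiction. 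Hence $\alpha_1(\Omega) < 0$.

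For (ii), $\lambda$ vanishes on $\partial\Omega = \{\zeta_2,\zeta_1\}\times N$, and Green's identity applied to $\lambda$ and $\phi_1$ with $L\lambda = -n$ and $L\phi_1 = -\alpha_1\phi_1$ yields
\[
\alpha_1 \int_\Omega \lambda\,\phi_1\,dV_g \;=\; n\int_\Omega \phi_1\,dV_g \;>\; 0,
\]
so $\alpha_1$ inherits the sign of $\lambda$ on the interior. Since $\zeta_2,\zeta_1$ are consecutive zeros of $\lambda$, that sign is constant on $(\zeta_2,\zeta_1)$; evaluating \eqref{lambda-e1} at $\zeta_1$ gives $r'(\zeta_1)\,\lambda'(\zeta_1) = -r(\zeta_1)/(n-1) < 0$. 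If $r(0)=r_{\min}$, then $r' > 0$ on $(0,s_1)$, so $\lambda'(\zeta_1)<0$, forcing $\lambda > 0$ on $(\zeta_2,\zeta_1)$ and hence $\alpha_1>0$. If $r(0)=r_{\max}$, then $r' < 0$ on $(0,s_1)$ and the same argument gives $\lambda < 0$ and $\alpha_1<0$.

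For (iii), $\lambda_0$ is even in $s$ and constant along $N$, so it is $H$-invariant and descends to a smooth function $\tilde\lambda$ on $(\Omega,g)/H$ vanishing on $\partial((\Omega,g)/H)$. With $r(0)=r_{\min}$, the sign analysis of (ii) applied with $\zeta_1=\theta$, $\zeta_2=-\theta$ shows $\lambda_0>0$ on $(-\theta,\theta)$, so $\tilde\lambda>0$ on the interior of the quotient; Green's identity on the quotient, with $\tilde\lambda$ and the first Dirichlet eigenfunction of $L$ there, then gives $\alpha_1>0$ exactly as in (ii). The one delicate step is the sign bookkeeping at $\zeta_1$ via \eqref{lambda-e1} in (ii), together with the appeal to unique continuation to upgrade $\le$ to $<$ in (i).
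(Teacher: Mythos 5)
Your proof is correct and follows essentially the same route as the paper: part (i) rests on $r'$ being a zero-eigenvalue Dirichlet eigenfunction of $(n-1)\Delta_g+R$ on $[0,s_1]\times N$ plus strict domain monotonicity (which you reprove via unique continuation where the paper simply cites Lemma 1 of Fischer--Colbrie and Schoen), part (ii) on the identity $\alpha_1\int_\Omega\lambda\phi_1=n\int_\Omega\phi_1$, and part (iii) on descending the argument to the quotient. The only cosmetic difference is that you read off the sign of $\lambda$ from $r'(\zeta_1)\lambda'(\zeta_1)=-r(\zeta_1)/(n-1)$ rather than from $\lambda(0)$ via Proposition \ref{examples-p1}(iii); both are valid.
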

\begin{proof}(i) Note that \eqref{eq-rRa3-s2} implies
 $$
r''' + ( n -1) \frac{ r' r''  }{ r} + \frac{R}{n-1} {r'} = 0 ,
$$
which  implies
$$
\Dg r'+\frac{R}{n-1}r=0
$$
on $ \Omega $.
Since $r' (0)=r'(s_1) = 0$ and $r'$ does not change sign in $(0,s_1)$, the first Dirichlet
eigenvalue of $(n-1)\Dg+R$ on $(0,s_1)\times N$ must be zero.
As $ (0, s_1) \times N $ is a proper subset of $ \Omega$,
we conclude that  (i) is true (see Lemma 1 in \cite{Fischer-ColbrieSchoen1980}).

(ii) By \eqref{critical-e2-s2}, we have
\begin{equation}\label{lambda0-e5}
\Dg\l+\frac{R}{n-1}\l=-\frac{n}{n-1}
\end{equation}
on $ \Omega $.
%Moreover,  $\l ( \zeta_1) = \l (\zeta_2) = 0$ and $\l$ does not change sign in $(\zeta_2,\zeta_1)$.
Let $ \gamma $ the first eigenvalue of
$ (n-1) \Dg + \frac{R}{n-1} $ on $ \Omega$.
Let $ \phi $ be an eigenfunction satisfying
\be \label{feigeneq}
\lf\{
\begin{array}{rcl}
(n-1) \Dg \phi + \frac{R}{n-1} \phi + \gamma \phi & = & 0 \
\mathrm{on} \ \Omega   \\
\phi & = & 0 \ \mathrm{on} \ \p \Omega
\end{array} .
\right.
\ee
It follows from \eqref{lambda0-e5}-\eqref{feigeneq} and
the fact $ \l = 0 $ on $ \p \Omega$  that
\be  \label{signofgamma}
 \gamma \int_\Omega \l \phi =  \frac{n}{n-1} \int_\Omega \phi .
\ee
Since both $ \phi $ and $ \l $ do not change sign in the interior of $ \Omega$, \eqref{signofgamma} implies that $ \gamma $
has the same sign as $ \l $ on $(-\zeta_2, \zeta_1)$.
If $ r(0) = r_{\min}$, we have $ \l (0) > 0 $ by (iii) in
Proposition \ref{examples-p1}, hence $ \gamma > 0 $.
Similarly, if $ r(0) = r_{\max}$,
 we have $ \l (0) < 0 $  and $ \gamma < 0 $.
 Therefore, (ii) is proved.

(iii) follows directly from (ii) and the fact that the natural
projection map from $(\Omega, g)$ to $(\Omega, g) / H $
is a local isometry.

\end{proof}

\section{conformally flat critical metrics}
In this section, we  consider conformally flat
metrics $ g $ satisfying
\begin{equation}\label{critical-e1-s3}
    -(\Delta_g\lambda)g+\nabla^2_g\lambda-\lambda \Ric(g)
    =g
\end{equation}
for some function $ \l $. Our main goal is to
classify all  compact manifolds with boundary 
which admit a  conformally flat critical metric.

We start with local properties of such a metric.
Similar  to the work  of Kobayashi
and Obata in \cite{KobayashiObata1981}, we have the following:

\begin{lma}\label{CF-l1}
Let $ (\Omega^n, g) $ be a connected, conformally flat
Riemannian manifold.
Suppose  there exists a smooth function $ \l $ such that
$ g $ and $ \l $ satisfy \eqref{critical-e1-s3}.
For $c\in \R$,  let $N$ be a component of $\l_c$  which is
the level set $\{\l=c\}\subset \Omega$
such that $\nabla\l\neq0$ on $N$. Then the following holds:
\begin{itemize}
  \item [(i)] $|\nabla\l|$ is constant on $N$.
  \item [(ii)] $N$ is totally umbilical with constant mean curvature.
  \item [(iii)] $N$ has constant sectional curvature.
\end{itemize}
\end{lma}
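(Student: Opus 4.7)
The plan is to work in an orthonormal frame $\{e_1=\nu,e_2,\dots,e_n\}$ adapted to $N$, where $\nu=\nabla\lambda/|\nabla\lambda|$ and $e_a\in TN$ for $a\ge 2$, and to extract algebraic identities from the critical equation using the two structural inputs available: conformal flatness of $g$ and the fact that its scalar curvature $R$ is constant. I would first recast \eqref{critical-e1-s3} in the equivalent form $\nabla^2_g\lambda=\lambda\,\Ric(g)-\frac{R\lambda+1}{n-1}g$ (compare \eqref{critical-e3-s2}), obtained by tracing and substituting back.

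Differentiating and commuting derivatives via the Ricci identity $\nabla_i\nabla_j\nabla_k\lambda-\nabla_j\nabla_i\nabla_k\lambda=R_{ijkl}\nabla^l\lambda$, together with the Codazzi symmetry $\nabla_iR_{jk}=\nabla_jR_{ik}$ (which follows from the vanishing Cotton tensor of a conformally flat metric combined with $R$ constant), yields
\be
R_{ijkl}\nabla^l\lambda=(\nabla_i\lambda)R_{jk}-(\nabla_j\lambda)R_{ik}-\frac{R}{n-1}\bigl(g_{jk}\nabla_i\lambda-g_{ik}\nabla_j\lambda\bigr).
\ee
Substituting the conformally flat decomposition of $R_{ijkl}$ in terms of $R_{jk}$, $R$ and $g_{jk}$, and rearranging, produces a single tensorial identity relating $\Ric$, $g$ and $T:=\nabla\lambda$. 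Specializing to the adapted frame along $N$, the choice $i=k=1$, $j=a$ with $a$ tangent gives $R_{1a}=0$, and the choice $i=1$, $j=a$, $k=b$ with $a,b$ tangent, after dividing by $|\nabla\lambda|\ne 0$ and using $n\ge 3$, gives $R_{ab}=\frac{R-R_{11}}{n-1}g_{ab}$. Part (i) is then immediate: for $X$ tangent to $N$,
\[
X\bigl(|\nabla\lambda|^2\bigr)=2|\nabla\lambda|\,\nabla^2\lambda(X,\nu)=2|\nabla\lambda|\,\lambda\,\Ric(X,\nu)=0.
\]
For umbilicity in (ii), the formula $A(X,Y)=|\nabla\lambda|^{-1}\nabla^2\lambda(X,Y)$ combined with the derived expression for $R_{ab}$ reduces to $A(X,Y)=-\frac{\lambda R_{11}+1}{(n-1)|\nabla\lambda|}\,g(X,Y)$, exhibiting $N$ as totally umbilical with principal curvature $\mu:=-\frac{\lambda R_{11}+1}{(n-1)|\nabla\lambda|}$.

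The main obstacle is then to prove that $\mu$ (equivalently the mean curvature $H=(n-1)\mu$) is constant on $N$: since $\lambda$ and, by (i), $|\nabla\lambda|$ are already constant on $N$, this amounts to showing that $R_{11}$ is constant on $N$. I would establish this by a second use of Ricci Codazzi in the adapted frame: since $R_{1a}=0$ along $N$, the ambient derivative $\nabla_a R_{bc}$ restricted to $N$ coincides with the intrinsic derivative $\overset{N}{\nabla}_a R_{bc}$ of the induced metric on $N$; plugging $R_{bc}=\frac{R-R_{11}}{n-1}g_{bc}$ into the symmetry $\overset{N}{\nabla}_a R_{bc}=\overset{N}{\nabla}_b R_{ac}$ and tracing $b=c$ yields $(n-2)\,e_a(R_{11})=0$, whence $R_{11}$ is constant on $N$ for $n\ge 3$. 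Finally, for (iii), substituting $R_{ab}=\frac{R-R_{11}}{n-1}g_{ab}$ and $R_{1a}=0$ into the conformally flat formula for $R_{abcd}$ shows that the ambient sectional curvature along any $2$-plane $\sigma\subset T_pN$ depends only on $R$ and $R_{11}$, and is in particular independent of $\sigma$; the Gauss equation for the umbilical immersion then expresses $K_N(\sigma)$ as a universal combination of this ambient sectional curvature and $\mu^2$, both of which are constant along $N$ by the preceding step and by (ii), so $(N,g|_N)$ has constant sectional curvature.
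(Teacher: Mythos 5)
Your proof is correct and follows essentially the same route as the paper's: both rest on the Codazzi property of the Schouten/Ricci tensor (vanishing Cotton tensor plus constancy of $R$), use the critical equation to trade $\nabla^2_g\lambda$ for $\Ric(g)$, invoke the conformally flat decomposition of the curvature tensor, and finish with Codazzi--Mainardi and Gauss for the hypersurface. The differences are organizational rather than substantive: you phrase the identities in terms of $\Ric$ instead of the Schouten tensor and the Hessian, you avoid dividing by $\lambda$ (so you need not reduce to the case $c\neq 0$ as the paper does), and you get constancy of $H$ from constancy of $\Ric(\nu,\nu)$ via the restricted Cotton identity rather than by applying Codazzi--Mainardi to the second fundamental form --- all equivalent in substance.
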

\begin{proof}  Let $ R $ be the scalar curvature of $ g $.
By \cite{MiaoTam08}, $ R $ equals a constant.
The proof in \cite{KobayashiObata1981} can then be carried over to our case. For the sake of completeness, we include the relevant details.  First note that it is sufficient to consider the case that $c\neq0$.  Since $\Omega$ is conformally flat, we have (see \cite{Lafontain1985} for example):
\begin{equation}\label{Schouten-e1}
    (\nabla_XS)(Y,Z)-(\nabla_YS)(X,Z)=0
\end{equation}
for all vector fields $X, Y, Z$, where $S$ is the Schouten tensor given by (at the points where $\l\neq0$)
\begin{equation}\label{Schouten-e2}
  \begin{split}
   (n-2) S= & \Ric(g) -\frac{R}{2(n-1)}g \\
   = & \l^{-1}\nabla^2_g\l+\frac1{(n-1)\l}g+\frac R{2(n-1)}g
\end{split}
\end{equation}
where we have used  \eqref{critical-e1-s3}.
 Moreover, the Weyl curvature tensor is zero and so the Riemannian curvature tensor of $ g $ equals the Kulkarni-Nomizu product of $ S $ and $ g$, which
 together with \eqref{Schouten-e2} shows
 \begin{equation}\label{Weyl-e1}
 \begin{split}
 R(X,Y,Z,U)&= \frac{   R +2\l^{-1}  }{(n-1)(n-2)} \lf[g(X,Z)g(Y,U)-g(X,U)g(Y,Z)\ri]
\\
&+\frac1{(n-2)\l}\bigg[  \nabla^2\l(X,Z)g(Y,U)+\nabla^2\l(Y,U)g(X,Z)\\
&-\nabla^2\l(X,U)g(Y,Z)
-\nabla^2\l(Y,Z)g(X,U)\bigg]
            \end{split}
\end{equation}
for all vector fields $X, Y, Z, U$. Here $R(X,Y,Z,U)$ is defined as $\la R(X,Y)U,Z\ra$ with $R(X,Y)=\nabla_X\nabla_Y-\nabla_Y\nabla_X-\nabla_{[X,Y]}$.

By \eqref{Schouten-e1} and \eqref{Schouten-e2}, we have:
\begin{equation}\label{Schouten-e3}
\begin{split}
0 = & D_X\lf(\l^{-1}\nabla^2_g\l\ri)(Y,Z)-D_Y\lf(\l^{-1}\nabla^2_g\l\ri)(X,Z) -\frac{
X(\l)g(Y,Z)-Y(\l)g(X,Z)}{(n-1)\l^2}\\
= & \l^{-1}\lf[D_X\lf(\nabla^2_g\l\ri)(Y,Z)-D_Y\lf(\nabla^2_g\l\ri)(X,Z)\ri]-\l^{-2}\lf(X(\l)\nabla^2\l(Y,Z)-Y(\l)\nabla^2\l (X,Z)\ri)\\
& -\frac{1}{(n-1)\l^2}
\lf(X(\l)g(Y,Z)-Y(\l)g(X,Z)\ri)\\
= & \l^{-1}R(X,Y,Z,\nabla\l)-\l^{-2}\lf(X(\l)\nabla^2\l(Y,Z)-Y(\l)\nabla^2\l (X,Z)\ri)\\
&-\frac{1}{(n-1)\l^2}
\lf(X(\l)g(Y,Z)-Y(\l)g(X,Z)\ri).
\end{split}
\end{equation}
Let $X$ be tangential to $N$ and let $Z=Y=\nabla\l$, we
have $$
0=Y(\l)\nabla^2\l(X,Z)=\frac12|\nabla\l|^2X(|\nabla\l|^2).
$$
on $N$. Hence $|\nabla\l|$ is constant on $N$. This proves
(i).

 In \eqref{Schouten-e3},
 let $X, Z$ be tangential to $N$, $Y=\nabla\l$ and
let $\xi=\nabla\l/|\nabla\l|$, we have
\begin{equation}\label{Schouten-e4}
   R(X,\xi,Z,\xi)=-\l^{-1}\lf(\nabla^2\l(X,Z)+\frac1{n-1}g(X,Z)\ri).
\end{equation}
On the other hand, let $Y=U=\nabla\l$ and $X$, $Z$
be tangential to $N$ in \eqref{Weyl-e1}, we  have:
\begin{equation}\label{Weyl-e2}
\begin{split}
  R(X,\xi, Z,\xi)= &
  \frac{1}{(n-2)} \l^{-1}
  \lf[ \nabla^2 \l (X, Z) + \nabla^2 \l(\xi, \xi) g(X, Z) \ri] \\
  &
  + \frac{1}{(n-1)(n-2)} ( 2 \l^{-1} + R ) g (X, Z).
 \end{split}
  \end{equation}
Comparing \eqref{Schouten-e4} and \eqref{Weyl-e2}, we have
\begin{equation}\label{SW-e1}
 \begin{split}
 (n-1)\nabla^2\l(X,Z) & = \lf[ - \nabla^2 \l(\xi, \xi)
 - \frac{n + R \l}{n-1} \ri] g(X, Z) \\
 & =  \lf[ - \nabla^2 \l(\xi, \xi)
+ \Delta_g \l \ri] g (X, Z)
 \end{split}
 \ee
 where
 the last step follows from
 \begin{equation}\label{critical-e2}
\Dg\l=-\frac1{n-1}\lf( R\l+n\ri)
\end{equation}
which is obtained by taking the trace of \eqref{critical-e1-s3}.
 Recall
 $$ \Delta_g \l = \Delta_g^N \l + H \frac{\partial \l}{\p \xi}
 + \nabla^2 \l (\xi, \xi) ,$$
 where $ H $ is the mean curvature of $ N$ and $\Dg^N$ is the Laplacian on $N$.
 Thus, \eqref{SW-e1} becomes
 \begin{equation}\label{umbilic-e1}
 \begin{split}
| \nabla \l |^{-1} \nabla^2\l(X,Z) & = \frac{H}{n-1}  g (X, Z) .
 \end{split}
 \ee
Now let $ A(X, Z) = g( \nabla_X \xi, Z)  $ be the second fundamental form of $N$, then
\begin{equation}\label{umbilic-e2}
A(X,Z)= \frac{\nabla^2\l(X,Z)}{|\nabla \l|}= \frac{H}{n-1}  g (X, Z),
\end{equation}
which shows $N$ is totally umbilical.

To prove that $H$ is constant on $N$. Let $ \alpha = H/(n-1)$.
By \eqref{Schouten-e3} and the
Codazzi-Mainardi equation for $X,Y, Z$ tangential to $N$,
we have
\begin{equation}
  \begin{split}
    0 &= R(X,Y,Z,\xi) \\
      & =\lf(\nabla_X^NA\ri)(Y,Z)-\lf(\nabla_Y^NA\ri)(X,Z)\\
      &=X( \alpha )g(Y,Z)-Y( \alpha )g(X,Z)
  \end{split}
\end{equation}
where $\nabla^N$ the covariant derivative of $N$. For
any given $X$,  let $Y=Z$ be a unit vector perpendicular to $X$.
Then $X( \alpha )=0$. Hence $\alpha$ is constant on $N$.
This proves (ii).

To prove (iii), let $X=Z$ and $Y=U$ in \eqref{Weyl-e1} and
choose $X$ and $Y$ to be orthonormal tangent vectors tangent
to $N$. It follows from \eqref{Weyl-e1}, \eqref{umbilic-e1}
and the fact $|\nabla\l|$ and $H$ are constant on $N$
that $R(X,Y,X,Y)$ is constant on $N$.
 By the Gauss equation and (ii), we
conclude that $N$ has constant sectional curvature.

\end{proof}

In the rest of this section, we assume that $ (\Omega, g) $
is a connected, compact Riemannian manifold
with a smooth (possibly disconnected) boundary $ \Sigma $.
Moreover, we make the following assumption on
$ (\Omega, g) $:

\vskip .2cm
{\sc Assumption A}: {\it $(\Omega^n,g)$ is conformally flat
and there is a smooth function $\l$ satisfying
\eqref{critical-e1-s3} and vanishing on $ \Sigma $.
Furthermore,  the first Dirichlet
eigenvalue of $(n-1)\Dg+R$ is nonnegative.}\vskip .2cm

Note that the condition on the first Dirichlet eigenvalues is automatically satisfied if $R\le 0$.

Given such an $ (\Omega, g ) $,
by \cite{MiaoTam08} we have $ \l > 0 $ in the interior of $ \Omega $.
In addition, if $ \nu $ denotes the outward
unit normal to $ \Sigma $,
then   $\frac{\p\l}{\p\nu}<0$ and is
constant on each connected component of $\Sigma$.
Similar to \cite{KobayashiObata1981}, we can now prove the
following result.

\begin{lma}\label{CF-l2} Let $\Sigma_0$ be a
connected component of $\Sigma$.
Let $\wt\Omega_0$ be the connected component of
the open set  $\{|\nabla\l|>0\} $ in $ \Omega $ such that its closure
contains $ \Sigma_0$ and let $\Omega_0=\wt\Omega_0\cup\Sigma_0$.
Then there exists a constant $ \delta_0 > 0 $ such that
$(\Omega_0,g)$ is isometric to
a warped product
$ ( [0, \delta_0) \times \Sigma_0,   ds^2+r^2h )$,
where $ r  > 0  $ is a smooth function on
$ [0, \delta_0)$ and $ h$ is the induced metric
on $\Sigma_0$ from $g$.
Moreover, $ \l $ on $ \Omega_0 $
depends only on $ s \in [0, \delta_0)$, and
$\Sigma_0$ has constant sectional curvature.
\end{lma}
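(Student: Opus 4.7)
The strategy is to identify $\Omega_0$ with a tubular neighborhood of $\Sigma_0$ obtained by flowing along the normalized gradient of $\l$, and then read off the warped-product structure and the constant sectional curvature of $\Sigma_0$ directly from Lemma \ref{CF-l1}.

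On $\wt\Omega_0$ set $\xi = \nabla\l/|\nabla\l|$, which is a smooth unit vector field. By Lemma \ref{CF-l1}(i), $|\nabla\l|$ is constant on each connected component of each level set of $\l$ inside $\wt\Omega_0$. Using this together with the symmetry of $\nabla^2\l$ and the identity $\nabla^2\l(X,\xi) = X(|\nabla\l|) - |\nabla\l|\la\nabla_X\xi,\xi\ra$ for $X$ tangent to the level set, a routine computation yields $\nabla_\xi\xi = 0$, so the integral curves of $\xi$ are unit-speed geodesics crossing the level sets of $\l$ orthogonally. Since $\p\l/\p\nu < 0$ is a nonzero constant on $\Sigma_0$, $\xi$ extends continuously to $\Sigma_0$ as $-\nu$. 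I would then define
$$\phi:[0,\delta_0)\times\Sigma_0\longrightarrow\Omega_0,\qquad \phi(s,x)=\gamma_x(s),$$
where $\gamma_x$ is the geodesic with $\gamma_x(0)=x$ and $\gamma_x'(0) = -\nu(x)$, and take $\delta_0$ to be the largest value such that $\phi((0,\delta_0)\times\Sigma_0)\subset\wt\Omega_0$.

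In these coordinates the metric takes the form $ds^2 + g_s$ with $g_0 = h$, and each slice $\Sigma_s := \phi(\{s\}\times\Sigma_0)$ lies in a single level set of $\l$ because the orthogonal geodesic flow preserves the level-set foliation. By Lemma \ref{CF-l1}(ii), $\Sigma_s$ is totally umbilical with constant mean curvature, so its second fundamental form is $A_s = \alpha(s)\,g_s|_{T\Sigma}$ for a scalar $\alpha(s)$ depending only on $s$. The standard identity $\p_s g_s = 2A_s$ then integrates to $g_s = r(s)^2 h$ with $r(s) = \exp\bigl(\int_0^s\alpha(t)\,dt\bigr) > 0$ and $r(0)=1$. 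The function $\l$, being constant on each $\Sigma_s$, depends only on $s$, and $\Sigma_0$ has constant sectional curvature by Lemma \ref{CF-l1}(iii).

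The main technical obstacle is verifying that $\phi$ is a diffeomorphism onto all of $\Omega_0$ and not merely onto an open subset. Injectivity follows from the strict monotonicity $\tfrac{d}{ds}\l(\gamma_x(s)) = |\nabla\l| > 0$ together with the fact that $\phi_s:\Sigma_0\to\Sigma_s$ is a local diffeomorphism on each slice. For surjectivity, let $U$ be the image of $\phi$; $U$ is open by construction, and I would verify it is relatively closed in $\Omega_0$: given $q_n = \phi(s_n,x_n) \to q \in \Omega_0$, the parameters $s_n$ are bounded by compactness of $\Omega$, and since $|\nabla\l|(q) > 0$ forces $q$ to sit in the interior of $U$ rather than on its flow-exit boundary where $|\nabla\l|$ would vanish, one can extract convergent subsequences $s_n\to s_\infty<\delta_0$ and $x_n\to x_\infty\in\Sigma_0$, giving $q = \phi(s_\infty,x_\infty)\in U$. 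Connectedness of $\Omega_0$ then yields $U = \Omega_0$.
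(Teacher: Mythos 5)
Your proof is correct and follows essentially the same route as the paper: flow from $\Sigma_0$ along the gradient direction, use Lemma \ref{CF-l1} to get constancy of $|\nabla\l|$ and total umbilicity on the slices, show the flow map is an injective local diffeomorphism whose image is open and relatively closed (hence all of the connected set $\Omega_0$), and integrate $\p_s g_s = 2A_s$ to obtain the warped product. The only real difference is that the paper parametrizes the flow lines by the value of $\l$ itself (using $v=\nabla\l/|\nabla\l|^2$, so the time-$s$ slice is automatically the level set $\{\l=s\}$) and rescales to arc length only at the end, whereas your unit-speed parametrization leaves the small extra step --- which you assert but should justify via Lemma \ref{CF-l1}(i) together with connectedness of the slices --- that the constant-$s$ slices of the geodesic flow still lie in single level sets of $\l$.
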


\begin{proof} The claim that $\Sigma_0$ has constant
sectional curvature is a direct corollary of  Lemma \ref{CF-l1}
and the facts  $ \l=0 $ on $ \S_0 $, $ \frac{\p \l}{\p \nu} \neq 0 $ on
$ \S_0 $.
On $ \Omega_0 $, define the smooth vector field
$v=\nabla\l/|\nabla\l|^2$. This vector field is smooth up to $\Sigma_0$.
 For any $ x\in \Sigma_0$,
  let $\zeta_x(s)$ be the integral curve of $v$ such
that $\zeta_x(0)=x$. Then $\zeta_x$ can be extended until
it meets   the boundary of $\Omega_0$. Suppose   $\zeta_x$
is defined on $[0,\delta_x)$, then
\begin{equation}\label{metricstructure-e1}
\l(\zeta_x(s))=\int_0^s  g( \nabla\l(\zeta_z(\tau)),
\zeta_x^\prime (\tau) g d\tau+\l( x ) = s.
\end{equation}
Hence if $[0,\delta_x)$ is the maximal domain of the definition
of $\zeta_x$ and $ \max_{\Omega} \l $ is the maximum value of
$ \l $ on $ \Omega$, then $\delta_x \leq \max_{\Omega} \l < \infty$.  Note that $\l(\zeta_x(s))$ is increasing
in $s$ and $\frac{\p\l}{\p\nu}<0$ on $\Sigma$, it is easily seen
 that for any $s_i\to \delta_x$, $\zeta_x(s_i)$
cannot converge to a point at $\Sigma$.

 We claim that  $\delta_x$ is constant on $\Sigma_0$.
 It is sufficient to prove that $\delta_x= \delta$ where
  $\delta=\inf_{y\in\Sigma_0}\delta_y  $ which is positive
  as $ \S_0 $ is compact. Suppose $\delta_x > \delta$
  for some $ x \in \S_0 $, then $|\nabla\l |  \ge c >0$ on
 $\zeta_x(\delta-\e,\delta+\e)$ for some constants $c$ and
$\e>0$.
For any $s\in (0,\delta)$, let
$N_s=\{\zeta_y(s)|\ y\in \Sigma_0\}$.
Then $ | \nabla \l | > 0 $ on $ N_s $, and
$ \l = s $ on $ N_s $ by \eqref{metricstructure-e1}.
Moreover, $ N_s $ is connected as $ \S_0 $ is connected.
Therefore, Lemma \ref{CF-l1} implies that $ | \nabla \l | $
is constant on $ N_s $.  Consequently,  $|\nabla\l| \ge c$ on
$ N_s $ for all $ s \in (\delta-\e,\delta)$. This implies
that all $\zeta_y$ can be extended up to $\delta+\e'$ for
some $\e'>0$  independent of $y$, which contradicts the
definition of $\delta$. Hence $\delta_x=\delta$ for all
$x\in \Sigma_0$.

Let $I= [0,\delta)$ and define the map $\Phi : I \times
\Sigma_0 \rightarrow \Omega_0$ by $\Phi(s,x)=\zeta_x(s)$,
then $ \Phi $ is  an injective, local diffeomorphism. It is also true that
$\Phi(I\times \Sigma_0)$ is closed in  $\Omega_0$ because
if $x_k\in \Phi(I\times \Sigma_0)$ with $x_k\to x\in \Omega_0$, and if $x\notin \Phi(I\times \Sigma_0)$, then $\nabla\l(x)=0$, contradicting the definition of $\Omega_0$. Since $ \Omega_0 $ is connected,
we conclude $\Omega_0=\Phi(I\times \Sigma_0)$.

Let $(u_1,\dots,u_{n-1})$ be some
local coordinates on $\Sigma_0$,
then $\Phi_*(\p_s)=\vec v$ is
orthogonal to $\Phi_*(\p_{u_i})$.
%$ i = 1, \ldots, n-1$.
Writing  $\Phi_*(\p_{u_i})$ as $ \p_{u_i}$, we have
\begin{equation*}
\begin{split}
\frac{\p}{\p s}g(\p_{u_i},  \p_{u_j})
&=g(\nabla_{\p_{u_i}} \vec v , \p_{u_j})+g( \p_{u_i}, \nabla_{\p_{u_j}}
\vec v)\\
&=2|\nabla\l|^{-1}\II( \p_{u_i},\p_{u_j})
  \end{split}
\end{equation*}
where $ \II $ is the second fundamental form of $ N_s $ with respect to $ \vec v$.
By Lemma \ref{CF-l1},
$
\II (\p_{u_i} , \p_{u_j} ) =  \alpha g( \p_{u_i},\p_{u_j})
$
for some function  $ \alpha $ depending only on $ s $,
moreover  $ | \nabla \l | $ also depends  only on $ s $.
Therefore,  in terms of coordinates $(s, u_1, \ldots, u_{n-1}) $ on
 $\Omega_0 $, the metric $ g $ can be written as
 $$ g = |\nabla\l|^{-2}ds^2+ \beta h $$
where $\beta $ is a function of $s$ and $h$ is the induced
metric on $\Sigma_0$. Rescaling $ s $ using the fact
that $|\nabla\l|$ depends only on $s$, we may re-write $ g $
 as  $g=ds^2+r^2 h$, where
$ s \in [0, \delta_0) $ for  some $ \delta_0 $
possibly different from $ \delta $, and  $ r $
is some function  depending only on $ s $.
The fact $ \delta_0 < + \infty $ follows from the assumption that
$ \Omega $ is compact.
 \end{proof}

Let $\Sigma_0$, $I=[0,\delta_0)$, $\Omega_0$,
$ r $ and $ h $ be  given as in Lemma \ref{CF-l2}. We identify
$I\times \Sigma_0$ with $\Omega_0$ using the isometry.
Since $\frac{\p \l}{\p\nu}<0$ on $\S_0 $ and
$ | \nabla \l | > 0 $ on $ I \times \S_0 $,
we have  $\l^\prime (s)  >0$ on $I \times \S_0 $, where
`` $'$ " denotes the derivative w.r.t $s$.
For convenience, we also normalize $ R $ so that
$R=n(n-1) \kappa $ with $ \kappa = 0, 1$ or $-1$.
By Proposition 1.1 in section 2, we have
\begin{equation}\label{warped-e1}
r''+\kappa r=ar^{1-n}
\end{equation}
for some constant $a$,
%\begin{equation}\label{warped-e2}
%\l''+\lf[(n-1)\frac{r''}r+n\kappa\ri]\l=-\frac1{n-1}.
%\end{equation}
and
\begin{equation}\label{warped-e3}
\frac{r'}r\l'-\frac{r''}r \l=-\frac1{n-1}.
\end{equation}
Also from section 2, we have
\begin{equation}\label{Rics-eq}
\Ric(g)(\p_s,\p_s)=-(n-1)\frac{r''}r= (n-1) \kappa - (n-1) ar^{-n}
\end{equation}
and
\begin{equation} \label{chieq}
 ( r^\prime )^2 + \kappa r^2 + \frac{2a}{n-2} r^{2-n} = \kappa_0
 \end{equation}
 where $\kappa_0$ is the sectional curvature of $(\Sigma_0,h)$
 which is a constant.

In what follows, we let $ \ol{ I \times \Sigma_0 } $ be
the closure of $ I \times \Sigma_0 $
in $ \Omega\cup\Sigma_0 $. Since $\nabla\l=0$ somewhere in $\Omega$,   $ \ol{ I \times \Sigma_0 } \setminus { I \times \Sigma_0 }$
is not empty and consists of points at which $\nabla\l=0$.

\begin{lma}\label{CF-l3} With the above notations, the following are true:

\begin{enumerate}
  \item [(i)] $a\ge0$.
  \item [(ii)] If $a=0$, then $(\Omega,g)$ is a geodesic ball in space forms.
  \item [(iii)] If $a>0$, then $S_0=\ol{I\times\Sigma_0}\setminus  I\times\Sigma_0$ is a connected, embedded, totally
   umbilical hypersurface with constant mean curvature in
   $\Omega$.
   Moreover, for any $p\in S_0$ there is an open neighborhood $U$
of $p$ such that $U\cap S_0=U\cap S$, where $S=\{q\in
\Omega|\ \nabla\l(q)=0\}$.
\end{enumerate}
\end{lma}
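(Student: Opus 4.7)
The plan is to analyze the endpoint $s=\delta_0$ of the warped product $(I\times\Sigma_0, ds^2 + r^2 h)$ that represents $\Omega_0$, since each point of $S_0$ arises as a limit $\zeta_y(s)\to \phi(y)\in S_0$ as $s\to\delta_0^-$. Because $\l$ is strictly increasing from $0$ along each $\zeta_y$, the curve cannot reach another boundary component, so its limit lies in $S=\{\nabla\l=0\}$; in warped coordinates $\nabla\l=\l'\p_s$, forcing $\l'(\delta_0^-)=0$. Passing to the limit in \eqref{warped-e3} gives
\begin{equation*}
r''(\delta_0)\,\l(\delta_0) = \frac{r(\delta_0)}{n-1}.
\end{equation*}
Since $\l(\delta_0)>0$, either $r(\delta_0)>0$, whence $r''(\delta_0)>0$ and \eqref{warped-e1} yields
$a=r(\delta_0)^n[\kappa+1/((n-1)\l(\delta_0))]$; or $r(\delta_0)=0$, in which case $r''(\delta_0)=0$ and \eqref{warped-e1} force $a=0$.

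For \textbf{(i)}, observe that $\delta_0$ is a local maximum of $\l$ along the $s$-line (as $\l'>0$ on $[0,\delta_0)$ with $\l'(\delta_0)=0$), so $\l''(\delta_0)\le 0$. Differentiating \eqref{warped-e3} and using \eqref{critical-e2-s2} and \eqref{warped-e1} shows that $\l$ satisfies $\l''+[(n-1)ar^{-n}+\kappa]\l=-1/(n-1)$ on $I$, and substituting the displayed expression for $a$ at $\delta_0$ yields
\begin{equation*}
\l''(\delta_0) = -\frac{n}{n-1} - n\kappa\,\l(\delta_0).
\end{equation*}
The inequality $\l''(\delta_0)\le 0$ becomes $\kappa\l(\delta_0)\ge -1/(n-1)$, which is automatic for $\kappa\in\{0,1\}$ and reduces to $\l(\delta_0)\le 1/(n-1)$ for $\kappa=-1$; in every case this forces $a\ge 0$ through the displayed formula for $a$.

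For \textbf{(ii)}, if $a=0$ then Remark \ref{s2-r1} gives that $g|_{\Omega_0}$ is Einstein, hence (being also conformally flat) of constant sectional curvature. Real-analyticity of Einstein metrics in harmonic coordinates propagates the identity $\Ric(g)=(n-1)\kappa g$ across the connected manifold $\Omega$, and Theorem \ref{Einstein-s0-t1} identifies $(\Omega,g)$ with a geodesic ball in a simply connected space form.

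For \textbf{(iii)}, assume $a>0$. Rewriting the conservation law \eqref{chieq} as $\frac{2a}{n-2}r^{2-n}=\kappa_0-\kappa r^2-(r')^2$ and using that $r$ is bounded above on $[0,\delta_0]$ (since $\Omega$ is compact), the right-hand side is bounded above, forcing $r\ge r_{\min}>0$. In particular $r(\delta_0)>0$, so \eqref{warped-e1} extends $r$ smoothly past $\delta_0$ and the warped metric $ds^2+r^2 h$ extends to $[0,\delta_0+\epsilon)\times\Sigma_0$. Since $|\nabla\l|$ is constant on level sets by Lemma \ref{CF-l1}(i), the curves $s\mapsto\Phi(s,y)$ are geodesics, hence extend past $\delta_0$ to give a smooth map $\tilde\Phi:[0,\delta_0+\epsilon)\times\Sigma_0\to\Omega$ whose pullback metric equals the extended warped product by continuity; so $\tilde\Phi$ is a local diffeomorphism across $\{\delta_0\}\times\Sigma_0$, and $\l\circ\tilde\Phi$ depends only on $s$. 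The formula above makes $\l''(\delta_0)\ne 0$ when $a>0$ (equality would force $\kappa=-1$ and $\l(\delta_0)=1/(n-1)$, hence $a=0$), so $|\nabla\l|^2=(\l'(s))^2$ is Morse--Bott with critical locus $\{\delta_0\}\times\Sigma_0$. Pushing forward by $\tilde\Phi$ produces, for every $p\in S_0$, a neighborhood $U$ with $U\cap S=U\cap S_0$, and shows that $S_0$ is locally an embedded hypersurface; connectedness follows from the continuous surjection $\phi:\Sigma_0\to S_0$. Finally, the standard warped-product computation gives the second fundamental form of $\{s=\delta_0\}$ as $\II(X,Y)=(r'(\delta_0)/r(\delta_0))\,g|_{TN}(X,Y)$, establishing total umbilicity and constant mean curvature $H=(n-1)r'(\delta_0)/r(\delta_0)$. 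The main technical hurdle is the $\kappa=-1$ subcase of (i), where $a\ge 0$ does not follow from direct sign analysis but only from the Morse maximality inequality $\l''(\delta_0)\le 0$.
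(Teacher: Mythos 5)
Your part (iii) is essentially sound and in fact takes a somewhat cleaner route than the paper: instead of the appendix's graph estimates for the slices $\{s\}\times\Sigma_0$, you extend the normal exponential map past $\delta_0$ and read off embeddedness from the nondegeneracy of the pullback metric at $s=\delta_0$; the key inequality $\nabla^2_g\l(\p_s,\p_s)<0$ on $S_0$, which both arguments need in order to get $U\cap S_0=U\cap S$, is the same. Parts (i) and (ii), however, have genuine gaps.

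For (i), your whole argument rests on evaluating \eqref{warped-e1} and \eqref{warped-e3} at $s=\delta_0$, which presupposes that $r$, $r'$, $r''$ and $\l$ extend continuously to $\delta_0$ --- equivalently, that $r$ stays bounded above and bounded away from $0$ as $s\nearrow\delta_0$. This is exactly the delicate point when $a<0$: the first integral \eqref{chieq} permits $r\to0$ with $(r')^2\to+\infty$ (since $\frac{2a}{n-2}r^{2-n}\to-\infty$), and it does not by itself exclude $r\to\infty$. In that regime your dichotomy ``$r(\delta_0)>0$ or $r(\delta_0)=0$'' is not exhaustive, and in the case $r(\delta_0)=0$ the deduction $r''(\delta_0)=0$ is invalid: for $a\neq0$ one has $r''=ar^{1-n}-\kappa r\to\pm\infty$ as $r\to0$, and the product $r''\l$ in \eqref{warped-e3} has no limit that you may equate to $r(\delta_0)/(n-1)$ (the other term $r'\l'$ is of the indeterminate form $\infty\cdot 0$). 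The paper's proof of (i) is devoted precisely to closing this: it bounds $r$ from below using that $\Ric(g)(\p_s,\p_s)=(n-1)\kappa-(n-1)ar^{-n}$ (equation \eqref{Rics-eq}) must be bounded on the compact $\Omega$, and rules out $r(s_k)\to\infty$ by a separate comparison argument when $\kappa=-1$. You single out the $\kappa=-1$ sign analysis as the main hurdle, but once the limits are known to exist your $\l''(\delta_0)\le 0$ inequality and the paper's maximum-principle bound $\max_\Omega\l\le\frac1{n-1}$ are interchangeable; the real hurdle is the existence of the limits, which you do not address.

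For (ii), you pass from ``$g$ is Einstein on $\Omega_0$'' to ``$g$ is Einstein on $\Omega$'' by invoking real-analyticity of Einstein metrics in harmonic coordinates. That is circular: $g$ is only known to be Einstein on the open set $\Omega_0$, so you have no analyticity, and no identity to propagate, on the rest of $\Omega$. What is actually needed --- and what the paper proves --- is that when $a=0$ one has $r(s)\to0$ as $s\to\delta_0$ (ruling out a positive limit $r_0>0$ by the same limit computation together with the strong maximum principle when $\kappa=-1$), so that $\ol{I\times\Sigma_0}=\Omega_0\cup\{p\}$ for a single point $p$; this set is then open and closed in the connected $\Omega$, hence equals $\Omega$, the metric is Einstein everywhere, and Theorem \ref{Einstein-s0-t1} applies. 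A unique-continuation or analyticity statement for solutions of \eqref{critical-e1-s3} itself could substitute, but that is a different theorem which you neither state nor prove.
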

 \begin{proof} (i) Suppose $a<0$. By \eqref{Rics-eq}, we
 have $\liminf_{s\nearrow\delta_0}r(s)>0$. Suppose there exists
 $s_k\nearrow\delta_0$ such that  $r(s_k)\to\infty$, then \eqref{chieq} implies
 $$
 \kappa+ \lf( \frac{ r'(s_k) }{ r(s_k) } \ri)^2 \to 0 .
 $$
 In particular, $\lf(\frac{r'(s_k)}{r(s_k)}\ri)^2 $ are uniformly bounded.
 Since $\l'(s_k)\to 0$, by \eqref{warped-e1} and \eqref{warped-e3}
 we have:
 $$
 -\kappa \lim_{k \to \infty} \l( s_k )=\frac{1}{n-1},
 $$
which is impossible if $\kappa=0, 1$ because $ \l > 0 $ in the interior of $ \Omega $. Suppose $\kappa=-1$. By \eqref{warped-e1}, $r^{\prime \prime} <  r$. Let $f$ be a function on $ I $
 such that $f''=f$, $f(0)=r(0)$ and $f'(0)>r'(0)$. Then $f>r$ near 0. Since $f$ is bounded on $I$ and $r(s_k)\to\infty$, there exists $s_0>0$ such that $f>r$ on $(0,s_0)$ and $f(s_0)=r(s_0)$. So we have $(r-f)^{\prime \prime} <  (r-f)$, $r-f<0$ on $(0,s_0)$, but $r-f=0$ at $0, s_0$. This is impossible.
Hence, we have $ \limsup_{s \nearrow \delta_0}  r(s) < + \infty $.
 It follows that $C^{-1}\le r\le C$ on $I$ for some $C>0$. In particular, $r$ can be extended smoothly beyond $\delta_0$ satisfying \eqref{warped-e1},
 and  $ \l $ can be extended smoothly beyond $\delta_0$ satisfying \eqref{warped-e3}.
 At $ \delta_0 $, we have $\l'(\delta_0)=0$, hence  \eqref{warped-e1} and \eqref{warped-e3} imply
 \be \label{limit-eq1}
 \l(\delta_0)=\frac{1}{n-1}\cdot\frac{1}{-\kappa+ar^{-n}(\delta_0)}.
 \ee
 Again this is impossible if $\kappa=0, 1$. Suppose $\kappa=-1$, then
 %\begin{equation}\label{e1}
 $ \l(\delta_0)>\frac1{n-1}$.
 %\end{equation}
Recall that $ \l =0 $ on $ \Sigma $ and by \eqref{critical-e2} we have
\be \label{critical-e2-n}
\Dg \lf(\l-\frac{1}{n-1}\ri)-n \lf(\l-\frac{1}{n-1}\ri)=0  \ \ \mathrm{on} \ \Omega .
\ee
 Hence, $\max_\Omega\l\le \frac{1}{n-1}$, contradicting
$ \l(\delta_0)>\frac1{n-1}$. This proves (i).

(ii) Suppose $a=0$. Then \eqref{warped-e1} becomes $r''+\kappa r=0$. Hence $r$ can be defined for all $s$. In particular, $\lim_{s\to\delta} r(s)=r_0$  exists.  Suppose  $r_0>0$, then $\l$ can be
extended beyond $\delta$ satisfying \eqref{warped-e3}.
As in case (i), it follows from \eqref{warped-e1} and \eqref{warped-e3} that
 $$
- \kappa  \l(\delta_0)=\frac{1}{n-1} ,
 $$
which is impossible if  $\kappa=0$ or $ 1 $. If $\kappa=-1$,
then $\l(\delta_0)=\frac1{n-1}$. However, by the proof of (i), we have
 $\max_\Omega\l\le \frac1{n-1}$. Thus, the  function
 $ \l - \frac{1}{n-1} $, which is not a constant,  achieves an interior maximum which is zero.
By \eqref{critical-e2-n}, we get a contradiction to the
strong maximum principle. Therefore,  $ r_0 = 0$. Consequently,
 $\ol{I\times \Sigma_0}\setminus I\times \Sigma_0$ consists of only one point, say $ p $. Let $B_p \subset \Omega $ be  a connected, open
 neighborhood  of $ p $. Then
 $ \lf(B_p \setminus\{p\} \ri) \cap \lf( I\times \Sigma_0 \ri) $ and
$ \lf(B_p \setminus \{p\}\ri) \setminus \ol{ I\times \Sigma_0 } $
 are both open sets in $B_p \setminus\{p\}$. Hence
 $ \lf(B_p \setminus \{p\}\ri) \setminus \ol{ I\times \Sigma_0 } = \emptyset$.
 As $ \Omega $ is connected,  we conclude that $\Omega= \ol{I\times \Sigma_0}$. As $ a = 0$, by Remark \ref{s2-r1}  the metric $g$ is Einstein. Hence $(\Omega,g)$ is a geodesic ball in space forms by Theorem \ref{Einstein-t1}.

 (iii) Suppose $a>0$. By Lemma \ref{lma-kb-s2}, $r$ can be extended to be a solution on $\R$ and is bounded below away from zero. Hence $ r $ satisfies  $ C^{-1} \leq r \leq C $ in $[0,\delta_0)$ for some positive  constant $ C $.

For each $ y \in \Sigma_0 $, let $ \alpha_y (s) $ denote the geodesic  starting from $ y  $ with $ \alpha_y^\prime (0) = \p_s $.
As $ \Sigma_0 $ is compact,  there exists $ \delta_1 > \delta_0 $
such that $ \alpha_y (s) $ is defined on $[0, \delta_1)$ for all
$ y \in \Sigma_0 $.  Clearly,
the set $ \{ \alpha_y (\delta_0) \ | \ y \in \Sigma_0 \} $ is contained
in  $ S_0 $. On the other hand, for any $p\in S_0$, there exists $ (s_k,x_k) \in I \times \Sigma_0 $  such that $ \alpha_{x_k}(s_k)   \to p$ with $s_k \to \delta_0$.
As $\Sigma_0$ is compact and $r \leq C $, there exists
$ x \in \Sigma_0 $ such that $ \alpha_{x} (s_k) \to p $.
 Hence, $ p = \alpha_{x} (\delta_0) $. This shows
$ S_0 = \{ \alpha_y (\delta_0) \ | \ y \in \Sigma_0 \} $,
in particular $ S_0 $ is connected (as $ \Sigma_0 $ is connected).

To show $ S_0 $ is an embedded hypersurface, we
let $\Sigma_s=\{s\}\times \Sigma_0$ for each $0<s<\delta_0$.
Since  the induced metric on $\Sigma_s$ is $r^2h$ and $ r \geq C^{-1} $, the curvature of $\Sigma_s$ is bounded by a constant independent of $s$.   Since $\Sigma_s$ is totally umbilical, it follows from the Gauss equation   that the norm of the second fundamental form of $\Sigma_s$ is  also bounded by a constant independent of $s$.
 For any $ p = \alpha_x (\delta_0) \in S_0 $,  using the estimates   Lemma \ref{graph1} in the Appendix  and the fact that $\Sigma_s$ is of constant mean curvature  with uniformly bounded second fundamental form, we  conclude that  there exists $\rho>0$  and
 a sequence $ s_k \nearrow \delta_0 $  such that
$N_k=\{(s_k,y)|\ y\in B_0(x,\rho)\}$ converges to an
embedded hypersurface $N_p$ passing through $p$. Here
$B_0(x,\rho)$ denotes a geodesic ball in $(\Sigma_0, h)$
centered at $ x $ with radius $ \rho$. Note that $N_p\subset S_0$.

 At $ p = \alpha_x (\delta_0) $, we have
$ \nabla \l = 0 $. By  \eqref{critical-e1-s3} and \eqref{Rics-eq},
we have
\be \label{Ricatp}
\nabla^2_g \l ( \alpha_x^\prime( \delta_0), \alpha_x^\prime (\delta_0) ) =
- \kappa \l - \frac{1}{n-1} - ( n-1) a r^{-n} .
\ee
As $ a > 0 $, \eqref{Ricatp} implies
$ \nabla^2_g \l ( \alpha_x^\prime( \delta_0), \alpha_x^\prime (\delta_0) ) < 0 $.
 This is obvious if
$\kappa=0$ or 1. If $\kappa=-1$, this follows from the fact
$ \max_\Omega \l<\frac1{n-1}$.

By shrinking $B_0(x,\rho)$ if necessary,  we may assume that
there exist small positive constants $b $ and $ c$ such that
$ \nabla^2_g \l ( \alpha_y^\prime( s), \alpha_y^\prime (s) ) < - c $
 for all $y\in B_0(x,\rho)$ and $ s \in (\delta_0-b,  \delta_0+b)$.
 As $ \nabla \l = 0$ at $ \alpha_y (\delta_0)  \in S_0$,
 we have   $ g (\nabla \l  ,  \alpha_y^\prime ( s) ) \neq 0$ for all
$y\in B_0(x,\rho)$ and $ s \in (\delta_0-b,  \delta_0+b)$.
In particular, $\nabla \l$ is not zero at
the points $\alpha_y(s)$ for all such $y$ and $s$.

We want to show that there is an open neighborhood $U$ of $p$
such that $U\cap S = U\cap N_p$. If not, then
 there exist a sequence of points $ \{ p_k \} \subset \Omega$
 such that $p_k\to p$, $p_k\notin N_p$ and
 $ p_k \in S $. For $ k $ sufficiently large, there exists minimal geodesics $\beta_{p_k} ( t ) $  starting from $ p_k $ and ending at
$ N_p$  so that $\beta_{p_k}^\prime (t) $ is perpendicular to $N_p$ at
some point $q_k= \alpha_{y_k} (\delta_0) $ for some
$y_k\in B_0(x, \rho)$. Since $ \beta_{p_k} $ and $ \alpha_{y_k} $
are two geodesics both perpendicular to $ N_p $ at
$ q_k $, we must have $ p_k = \alpha_{y_k} ( s_k ) $
for some $ s_k $. Moreover, $ s_k \neq \delta_0 $
as $ p_k \notin N_k $. When $ k $ is sufficiently large,
we have $ s_k \in (\delta_0 - b , \delta_0 + b) $, hence
$\nabla \l$ is not zero at $ p_k = \alpha_{y_k} ( s_k)$,
contradicting the fact that $ p_k \in S $.

As $ S_0 \subset S $, we conclude that  $S_0$ is an
embedded hypersurface in $ \Omega $ such
that for, each $ p \in S_0 $, there is an open neighborhood $ U $
of $ p$ such that  $U\cap N_p = U \cap S$.
The fact that $S_0$ is  totally
umbilical and has constant mean curvature follows directly from the
fact that each $\Sigma_s$ is totally umbilical and has
constant mean curvature.

 \end{proof}

 Let $(\Omega,g)$ be  given as before. Assume that
 $(\Omega,g)$ is not a geodesic ball in space forms.
 Let  $\Sigma_1,\dots,\Sigma_k$ be the connected components
 of the boundary $ \Sigma$.  For each $ i = 1, \ldots, k $,
 let $\Omega_i$ be  the connected component of the open set $\{|\nabla\l|> 0\}$   in $ \Omega $ whose closure contains $ \Sigma_i $.
 By Lemma \ref{CF-l2}, each $\Omega_i$  can be then
 identified with  $I_i\times\Sigma_i$ where
 $ I_i =  (0,\delta_i)$ for some $ 0 < \delta_i < \infty $.
 On $ I_i \times \Sigma_i $, the metric $ g $
has the form $ds^2+r_i^2h_i$, where $ r_i $ is some
smooth positive function on $ I_i $.
By \eqref{warped-e1} and Lemma \ref{CF-l3}, each
$r_i$ satisfies $r_i''+\kappa r=a_ir^{1-n}$ for
some constant $a_i>0$.
Let $ \ol{I_i\times\Sigma_i} $ be the closure of $ I_i \times \Sigma_i $
in $ \Omega $ and let
$S_i=\ol{I_i\times\Sigma_i}\setminus I_i\times\Sigma_i$.
By Lemma \ref{CF-l3}, each $ S_i $ is a connected, embedded,
totally umbilical hypersurface with constant mean curvature
in the interior of  $ \Omega$.

\begin{lma}\label{CF-l4}
With the above assumptions and notations, $ \Sigma $ at most
has two connected components, i.e. $ k \leq 2$.
If $ k = 2 $, then $ S_1 = S_2 $ and
$ \Omega = \ol{ I_1 \times \Sigma_1}
\cup \ol{ l_2 \times \Sigma_2} $.
If $ k = 1 $, then $ \Omega = \ol{ I_1 \times \Sigma_1}$.
\end{lma}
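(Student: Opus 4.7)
The strategy is to show that the map $\Phi_1:[0,s_1^*]\times\Sigma_1\to\Omega$ obtained by following unit-speed geodesics $\alpha_y(s)=\exp_y(s\nu_y)$ inward from $\Sigma_1$ sweeps out all of $\Omega$ and lands on another boundary component $\Sigma_j$ at its terminal slice. Once this is in hand, $k\le 2$ and the structural conclusion follow immediately by intersecting the image with $\Sigma$.

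By Lemma \ref{CF-l2}, $\Phi_1$ identifies $(0,\delta_1)\times\Sigma_1$ isometrically with $\wt\Omega_1$, with $\Phi_1^*g=ds^2+r_1^2 h_1$ there. Since each geodesic $\alpha_y$ merely passes through the interior hypersurface $S_1$ at $s=\delta_1$, smoothness of the exponential map extends $\Phi_1$ to $[0,\delta_1+\e)\times\Sigma_1$ for some $\e>0$. The metric $g$ is conformally flat with constant scalar curvature and is therefore real analytic in suitable isothermal coordinates, while $r_1$ extends by \eqref{warped-e1} to an analytic positive function on $\R$; analytic continuation then forces $\Phi_1^*g=ds^2+r_1^2 h_1$ on the whole extended domain, and consequently $\Phi_1^*\l=\l(s)$ continues to satisfy \eqref{warped-e3}. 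The Ricci computation already inside the proof of Lemma \ref{CF-l3}(iii) gives $\l''(\delta_1)<0$, so $\l$ strictly decreases past $\delta_1$.

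Applying the ODE analysis used in the proof of Proposition \ref{examples-p1}---writing $\l=\l_0+Cr_1'$ relative to a critical point of $r_1$---produces a first value $s_1^*>\delta_1$ with $\l(s_1^*)=0$ and $\l>0$ on $(0,s_1^*)$. Because $\l>0$ forces $\alpha_y(s)$ to lie in the interior of $\Omega$ for $s\in(0,s_1^*)$, $\Phi_1$ is defined on $[0,s_1^*]\times\Sigma_1$ and maps $\{s_1^*\}\times\Sigma_1$ into $\Sigma$. Non-degeneracy of $\Phi_1^*g$ on $[0,s_1^*]\times\Sigma_1$ makes $\Phi_1|_{\{s_1^*\}\times\Sigma_1}$ a local isometry up to rescaling, so its image is an open, closed, connected subset of $\Sigma$ equal to some boundary component $\Sigma_j$. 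The connected open piece $\Phi_1((\delta_1,s_1^*)\times\Sigma_1)\subset\{|\nabla\l|>0\}$ has $\Sigma_j$ in its closure, so it lies in $\wt\Omega_j$; an open-closed argument inside $\wt\Omega_j$ promotes this containment to equality, and since the unique interior-critical boundary of $\ol{\wt\Omega_l}$ is $S_l$, we deduce $S_1=S_j$.

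It remains to show that $\Phi_1([0,s_1^*]\times\Sigma_1)=\Omega$. The image is closed as a finite union of closed sets, and I verify openness pointwise: interior points off the critical slice are covered by the local-diffeomorphism property of $\Phi_1$; points on $\Sigma_1\cup\Sigma_j$ have half-ball neighborhoods inside $\ol{I_1\times\Sigma_1}$ or $\ol{I_j\times\Sigma_j}$; and points on $S_1=S_j$ are covered from both local sides via $s<\delta_1$ and $s>\delta_1$. Connectedness of $\Omega$ then yields equality, and intersecting both sides with $\Sigma$ gives $\Sigma=\Sigma_1\cup\Sigma_j$. If $j=1$ this yields $k=1$ and $\Omega=\ol{I_1\times\Sigma_1}$; if $j\neq 1$, then $k=2$ and $\Omega=\ol{I_1\times\Sigma_1}\cup\ol{I_2\times\Sigma_2}$ with $S_1=S_2$. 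I expect the main obstacle to be the second paragraph: rigorously propagating the warped-product form of $\Phi_1^*g$ across the critical hypersurface $S_1$. The analytic continuation route is the cleanest; an alternative would compare $\Phi_1$ against the Fermi chart along $S_1$ and use the constancy of sectional curvature and mean curvature of $S_1$ from Lemma \ref{CF-l1} together with \eqref{critical-e3-s2} to identify the metric on the far side.
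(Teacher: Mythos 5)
Your strategy---pushing the geodesic foliation from $\Sigma_1$ across the critical hypersurface $S_1$ and arguing it must land on another boundary component---is genuinely different from the paper's, but as written it has a real gap at the step where you assert that the continued ODE solution $\l(s)$ has ``a first value $s_1^*>\delta_1$ with $\l(s_1^*)=0$.'' Proposition \ref{examples-p1} is exactly the place where this can fail: when $R<0$, cases (a) and (b) of part (ii) produce solutions $\l=\l_0+Cr'$ with only \emph{one} root. In your normalization that single root sits at $\Sigma_1$, and on the inward side $\l$ stays positive for all $s$, so no $s_1^*$ exists and the geodesics never return to $\Sigma$. You give no argument excluding this possibility for the particular $\l$ arising from the given compact $\Omega$. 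Ruling it out requires extra input --- e.g.\ the bound $\max_\Omega\l\le\frac{1}{n-1}$ from \eqref{critical-e2-n} together with the strong maximum principle, or a volume/compactness contradiction from $r(s)\to\infty$ --- and supplying that input is essentially where the content of the lemma lies. A secondary, repairable weakness is the one-line claim that conformal flatness plus constant scalar curvature makes $g$ real analytic: this is true but needs elliptic regularity for the Yamabe-type equation in each flat conformal chart and Liouville's theorem to see that the charts form an analytic atlas; none of that is in the paper, and your whole continuation of the warped-product identity across $S_1$ rests on it.

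For comparison, the paper's proof avoids both issues entirely. It first uses Lemma \ref{CF-l3}(iii) (that near each point of $S_i$ the critical set $S=\{\nabla\l=0\}$ coincides with $S_i$) to show that if two pieces $S_i$, $S_j$ meet then $S_i=S_j$, in which case the union $\ol{I_i\times\Sigma_i}\cup\ol{I_j\times\Sigma_j}$ is open and closed, hence all of $\Omega$. To kill the remaining configurations it sets $U=\Omega\setminus\bigcup_i\ol{I_i\times\Sigma_i}$ and applies the maximum principle: for $\kappa=0,1$ one has $\Dg\l<0$ so $\min_{\ol U}\l$ occurs on $\p U\subset\bigcup_i S_i$, and the Hopf boundary point lemma forces a nonzero normal derivative there, contradicting $\nabla\l=0$ on $S_i$; for $\kappa=-1$ the same argument is run on $\l-\frac1{n-1}$. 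This is shorter, needs no analyticity, and handles all signs of $R$ uniformly --- precisely the case your construction leaves open. If you want to keep your geodesic-continuation approach, you should at minimum import this maximum-principle step to exclude the one-root scenarios.
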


\begin{proof}
Suppose $ k \geq 2 $ and suppose $ S_i \cap S_j \neq \emptyset $
for some $ i \neq j $, say $ i =1, j=2 $.  For any $ p \in S_1 \cap S_2$, Lemma \ref{CF-l3} implies
there exists an open neighborhood $ U $ of $ p $ in $ \Omega $
such that $ U \cap S_1 = U \cap S $, where $ S = \{ \nabla \l = 0 \} $.
As $ S_2 \subset S $, we have $ U \cap S_2 \subset U \cap S_1 $.
As $ S_1 $ and $ S_2 $ are embedded hypersurfaces, the above
implies $ S_1 \cap S_2 $ is an open subset of both  $ S_1 $ and $ S_2 $. As $ S_1 $ and $ S_2 $ are connected,
we have $ S_1 = S_1 \cap S_2 = S_2 $.
Now, every geodesic in $\Omega$ emanating from
and perpendicular to  $ S_1 = S_2 $ is either contained
in $ I_1 \times \Sigma_1 $ or $ I_2 \times \Sigma_2 $. Hence,
$ \ol{ I_1 \times \Sigma_1 } \cup \ol{ I_2 \times \Sigma_2 } $
is both an open and a closed set in $ \Omega$. As $ \Omega $
is connected, we must have $ \Omega =  \ol{ I_1 \times \Sigma_1 } \cup \ol{ I_2 \times \Sigma_2 } $ and $ k = 2 $.

Suppose $ k \geq 2 $ and suppose $ S_i \cap S_j = \emptyset$  for  any $ i \neq j $. We prove that this is impossible by
considering
$ U =  \Omega \setminus \cup_i \ol{ I_i \times \Sigma_i }$.
If $ U = \emptyset$, then each $ \ol{ I_i \times \Sigma_i }$
would be both open and closed in $ \Omega$,
contradicting the fact that $ \Omega $ is connected.
Suppose $ U \neq \emptyset $. If $ \kappa = 0 $ or $ 1 $,
then \eqref{critical-e2} implies
$$ \Delta_g \l = - n \kappa \l - \frac{n}{n-1} < 0 , $$
where we also used $ \l  > 0 $ in the interior of $ \Omega$.
Hence, $ \min_{\bar{U}} \l $ could only occur at $ \p U = \cup_i S_i $.
Suppose $ p \in \p U  $ such that $ \l (p) =  \min_{\bar{U}} \l $,
then the strong maximum principle  implies
$ \frac{\p \l}{\p \nu_U}  \neq 0 $, where $ \nu_U $ is a unit normal
vector to $ \p U $ at $ p$. This contradicts to
the fact that $ \nabla \l = 0 $ at points in $ S_i $.
If $ \kappa = - 1$, then as in
the proof of Lemma \ref{CF-l3}, we have $ \l < \frac{1}{n-1} $
on $ \Omega $ and
$$
\Delta_g \lf( \l - \frac{1}{n-1} \ri) = n \lf( \l - \frac{1}{n-1} \ri) .
$$
Applying the strong maximum principle to $ \l - \frac{1}{n-1} $
on $ U$, we get a contradiction as before.
Therefore, we conclude that if $ k \geq 2 $, then
$ k = 2 $, $ S_1 = S_2 $ and $ \Omega = \ol{ I_1 \times \Sigma_1}
\cup \ol{ l_2 \times \Sigma_2} $.

Next, suppose $k = 1$. Let $ U = \Omega \setminus
\ol{ I_1 \times \Sigma_1} $. The exact same argument
in the previous paragraph implies $ U = \emptyset$.
We conclude $ \Omega = \ol{ I_1 \times \Sigma_1}$.

  \end{proof}

\begin{thm}\label{CF-t1}
Let $ (\Omega, g) $ be a connected, compact Riemannian manifold
with a disconnected boundary $ \Sigma $. Suppose $(\Omega, g)$  satisfies ASSUMPTION A.
Then  $(\Omega,g)$ is one of  the manifolds constructed in Example {\bf (1)} after Proposition \ref{examples-p1}.
  \end{thm}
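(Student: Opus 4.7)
The plan is to exploit the global decomposition provided by Lemma \ref{CF-l4}. Since $\Sigma$ is disconnected, it has exactly two components $\Sigma_1, \Sigma_2$, and $\Omega = \ol{I_1 \times \Sigma_1} \cup \ol{I_2 \times \Sigma_2}$, where the two warped-product regions meet along the common inner hypersurface $S_1 = S_2$. On each $\Omega_i = I_i \times \Sigma_i$ with $I_i = [0, \delta_i)$, Lemmas \ref{CF-l2} and \ref{CF-l3} give $g = ds^2 + r_i^2 h_i$ with $(\Sigma_i, h_i)$ of constant sectional curvature $\kappa_{0,i}$ and $r_i$ satisfying $r_i'' + \kappa r_i = a_i r_i^{1-n}$ with $a_i > 0$. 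The goal is to fuse the two warped products into a single warped product over an interval and identify the result with Example {\bf(1)}.

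First I would glue across $S_1 = S_2$. Introduce a signed arc-length coordinate $\tau$ normal to $S_1$, with $\tau = 0$ on $S_1$, $\tau = \delta_1$ on $\Sigma_1$, and $\tau = -\delta_2$ on $\Sigma_2$. Setting $\wt r(\tau) := r_1(\delta_1 - \tau)$ for $\tau > 0$ and $\wt r(\tau) := r_2(\delta_2 + \tau)$ for $\tau < 0$, the smoothness of $g$ at the interior hypersurface $S_1$, together with the convergence of the slices $\{s\} \times \Sigma_i$ to $S_1$ from each side (as established in the proof of Lemma \ref{CF-l3}), forces the two induced metrics on $S_1$ to coincide. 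Absorbing a positive constant into $h_2$, one may identify $(\Sigma_1, h_1) \cong (\Sigma_2, h_2) =: (N, h)$. Matching $\wt r, \wt r', \wt r''$ across $\tau = 0$ and using the ODE $r_i'' = -\kappa r_i + a_i r_i^{1-n}$ then forces $a_1 = a_2 =: a > 0$, so $\wt r$ is a smooth positive solution of $\wt r'' + \kappa \wt r = a \wt r^{1-n}$ on $(-\delta_2, \delta_1)$ and $g = d\tau^2 + \wt r^2 h$ globally on $\Omega$.

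Next I would locate a critical point of $\wt r$ inside $(-\delta_2, \delta_1)$. Equation \eqref{warped-e3} at $s=0$ together with $\l_i(0) = 0$ gives $r_i'(0) \l_i'(0) = -r_i(0)/(n-1) < 0$; since $\l_i'(0) > 0$ (from $\p\l/\p\nu < 0$ on $\Sigma_i$, under Assumption A and \cite{MiaoTam08}), we obtain $r_i'(0) < 0$. Translated to $\wt r$ this yields $\wt r'(\delta_1^-) > 0$ and $\wt r'(-\delta_2^+) < 0$, so by continuity $\wt r'(s^*) = 0$ for some $s^* \in (-\delta_2, \delta_1)$. By Lemma \ref{lma-kb-s2}, $\wt r$ extends to a positive solution on all of $\R$. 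Shifting the coordinate by $s^*$, we obtain $(\Omega, g) \cong ([\zeta_2, \zeta_1] \times N, ds^2 + r^2 h)$ with $r'(0) = 0$ and $\zeta_2 < 0 < \zeta_1$, exactly the setup of Example {\bf(1)}. The combined $\l$ is a smooth solution of \eqref{lambda-e1} vanishing at the endpoints; by Lemma \ref{lma-3-s2} it has the form $\l_0 + C r'$, and Proposition \ref{examples-p1} then yields the relation \eqref{lambda0-e2} between $\zeta_1, \zeta_2$ and the roots $\pm\theta$ of $\l_0$.

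The main obstacle is the smooth gluing across $S_1 = S_2$: identifying $\Sigma_1$ with $\Sigma_2$ via normal geodesics through $S_1$ in a way compatible with both warped-product structures, and deducing from the smoothness of $g$ that $a_1 = a_2$. Once this global warped-product structure is established, the remaining steps reduce to the ODE analysis already developed in Section 3 and Proposition \ref{examples-p1}.
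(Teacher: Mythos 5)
Your proposal is correct and follows essentially the same route as the paper: decompose via Lemma \ref{CF-l4}, glue the two warped products along $S_1=S_2$ by matching $r$ and its derivatives across the interior hypersurface (the paper normalizes $a_1=a_2$ by the rescaling $(r_i,h_i,a_i)\mapsto(cr_i,c^{-2}h_i,c^na_i)$ first and then deduces $r_1(\delta_1)=r_2(\delta_2)$ and $h_1=h_2$ from $\nabla\l=0$ on $S$ via \eqref{warped-e3}, whereas you normalize $h_1=h_2$ first and extract $a_1=a_2$ from $C^2$-matching, which is equivalent), then locate a root of $r'$ from the signs $r_i'(0)<0$ and invoke Lemma \ref{lma-kb-s2} and Proposition \ref{examples-p1}. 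The one detail you flag but do not carry out --- that the normal maps $x\mapsto\alpha_x(\delta_i)$ from $\Sigma_i$ to $S$ are injective (hence diffeomorphisms), so the gluing really yields a global product $I\times\Sigma_1$ --- is handled in the paper by the observation that two such geodesics hitting the same $p\in S$ from the same side would share the same unit normal there.
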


\begin{proof}
By Lemma \ref{CF-l4}, $\Sigma $ has exactly two
connected components, say $ \Sigma_1 $ and $ \Sigma_2$.
Moreover, if $ I_i = [0, \delta_i)$,
$ I_i \times \Sigma_i $, $ \ol{I_i \times \Sigma_i }$
and $ S_i $ are given as in Lemma \ref{CF-l4} for $ i =1$,  $2$.
then $ S_1 = S_2 $ and
$ \Omega = \ol{ I_1 \times \Omega_1}
\cup \ol{ I_2 \times \Omega_2} $.
 On each $I_i \times \Sigma_i $,
 by Lemma \ref{CF-l2} and \ref{CF-l3},
the metric $ g $ has the form
\be \label{geq}
 g = d s^2 + r^2_i h_i
\ee
 where $ h_i $ is a metric on $ \Sigma_i$
with constant sectional curvature  and $ r_i $ is
a smooth positive function on $ I_i $ satisfying
\be \label{req}
r_i^{\prime \prime} + \kappa r_i = a_i r_i^{1-n}
\ee
for some constant $ a_i > 0 $. Here we normalized $ g $
so that its scalar curvature, which is a constant, is
$ n (n-1) \kappa $ with $ \kappa = 0$ or $ \pm 1 $.
Note that \eqref{geq} and \eqref{req} are invariant if
the triple $(r_i, h_i , a_i )$ is replaced by
$( c r_i, c^{-2} h_i, c^n a)$ for any $ c > 0 $.
Hence, after rescaling, we may assume that $ a_1 = a_2 $.

 Let $ S $ denote $ S_1 = S_2 $. For any $ p \in S$,
 there exists $ x  \in \Sigma_1 $, $ y \in \Sigma_2 $ such that
 $ \alpha_{x} ( \delta_1 ) = p = \beta_y (\delta_2) $.
 Here $ \alpha_{x}(s) $, $ \beta_{y}(s) $ denote the geodesic
 staring from $ x $, $ y$  with $ \alpha_{x}^\prime(0) = \p_s $,
 $  \beta_{y}^\prime (0) = \p_s $ respectively.
 At $ p$, $ \alpha_x^\prime ( \delta_1) $ and $ \beta_y^\prime (\delta_2)$ are both perpendicular to $ S$, hence
 $  \alpha_x^\prime ( \delta_1) = -  \beta_y^\prime (\delta_2)$,
 and $ \gamma = \alpha_x \cup ( - \beta_y ) $ is a geodesic
 in $ \Omega$,
 where $ - \beta_y (s)  $ is defined as $ \beta_y (-s) $.
 At $ p$, recall that  $ \nabla \l = 0 $. By \eqref{warped-e3},
 we  then have
 \be \label{rdelta}
 \frac{r_1^{\prime \prime} (\delta_1) }{r_1 (\delta_1) } =
  \frac{r_2^{\prime \prime} (\delta_2) }{r_2 (\delta_2) } .
 \ee
 It follows from \eqref{req}, \eqref{rdelta} and the fact $ a_1 = a_2 $
 that $ r_1 ( \delta_1) = r_2 ( \delta_2) $.
 On the other hand, using the fact that the mean curvature
 of $ S $ w.r.t $ \alpha_x^\prime (\delta_1) $ is negative
 of the mean curvature of $ S $ w.r.t
 $ \beta_y^\prime (\delta_2) $, we conclude
 $ r_1^\prime ( \delta_1) =  - r_2^\prime ( \delta_2)$.
 In particular, if we let $ I = [0, \delta_1 + \delta_2 ]$
 and define   $ r(s) = r_1 (s) , s \in [0, \delta_1] $
 and $ r(s) = r_2 (  \delta_1 + \delta_2 -s ) , s \in [ \delta_1, \delta_1 + \delta_2] $,
 then $ r (s) $ is a smooth function on $ I $ satisfying
 \be \label{geq-2}
 r^{\prime \prime} + \kappa r = a r^{1-n},
 \ee
  where $ a  = a_1 = a_2 $ is some positive constant.

 Now suppose there exists  another $ \tilde{x} \in \Sigma_1 $
  such that $ \alpha_{\tilde{x}} (\delta_1) = p $, then we would
  have
  $   \alpha_{\tilde{x}}^\prime ( \delta_1) =  \alpha_{x}^\prime ( \delta_1)  $,  hence $ \tilde{x} = x $. This implies the maps
  $ x \mapsto \alpha_x (\delta_1) $, $ y \mapsto \beta_y ( \delta_2) $
  are bijective from $ \Sigma_1$, $ \Sigma_2 $ to $ S$.
  Consequently, the map $ (x, s) \mapsto \gamma_x(s) $,
  where $ \gamma_x (s) $ is the geodesic staring from $ x \in \Sigma_1 $ and perpendicular to $ \Sigma$, is a diffeomorphism
  from $ I \times \Sigma_1 $ to $ \Omega$.
  By \eqref{geq},  the induced metric from $ g $ on $ S = \{ \delta_1 \} \times \Sigma_1 $  is given by  both
  $ r_1^2 (\delta_1) h_1 $ and $ r_2^2 ( \delta_2) h_2 $.
  As  $ r_1 ( \delta_1) = r_2 ( \delta_2)$, we have $ h_1  = h_2 $.

 Note that $ r^\prime (0) = r^\prime_1(0) < 0 $
 and $ r^\prime ( \delta_1 + \delta_2) = - r^\prime_2 ( 0 ) > 0 $,
 hence there exists an $ s_0 \in I $ such that $ r^\prime (s_0) = 0 $.
 Replacing $ s $ by $ s - s_0$,   we conclude that
 $ \Omega $ is isometric to $ I  \times \Sigma_1$, where
 $ I $ is replaced by $ (- s_0, \delta_1 + \delta_2 - s_0 )$ and the
 metric $ g $ is given by
 $ g = d s^2 + r^2 h $ with  $ r $ satisfying \eqref{geq-2}
 and $ r^\prime (0) = 0 $. Moreover, $ \l $ only depends on $ s \in I $.
 As $ a > 0 $, by Section 2 we know both $ r $ and $ \l $ can be
 extended to $ \R^1 $ and $ 0 $. Therefore, $ (\Omega, g)$
 is one of the examples in Example {\bf (1)} after Proposition \ref{examples-p1}.
 \end{proof}

  \begin{thm}\label{CF-t2}
  Let $ (\Omega, g) $ be a connected, compact Riemannian manifold
with a connected boundary $ \Sigma $. Suppose $(\Omega, g)$  satisfies ASSUMPTION A.  Then  $(\Omega,g)$ is either
a geodesic ball in a simply connected space form or  one of the manifolds constructed in   in Example {\bf (2)} after Proposition \ref{examples-p1}.  \end{thm}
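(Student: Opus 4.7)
The plan is as follows. By Lemma \ref{CF-l4}, the hypothesis that $\Sigma$ is connected forces $\Omega = \overline{I_1 \times \Sigma_1}$, and on $I_1 \times \Sigma_1 = (0,\delta_1) \times \Sigma_1$ the metric takes the warped form $g = ds^2 + r_1(s)^2 h_1$ where $r_1$ solves $r_1'' + \kappa r_1 = a_1 r_1^{1-n}$ for some constant $a_1 \geq 0$. If $a_1 = 0$, Lemma \ref{CF-l3}(ii) immediately concludes that $(\Omega, g)$ is a geodesic ball in a simply connected space form, so I henceforth assume $a_1 > 0$. Under this assumption, Lemma \ref{CF-l3}(iii) gives that the set $S_1 = \overline{I_1 \times \Sigma_1} \setminus (I_1 \times \Sigma_1)$ is a nonempty, connected, embedded, totally umbilical hypersurface in the interior of $\Omega$, and $\nabla \lambda = 0$ on $S_1$.

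I next analyze the smooth extension $\Phi : [0,\delta_1] \times \Sigma_1 \to \Omega$ of the chart from Lemma \ref{CF-l2}, sending $(s,x) \mapsto \alpha_x(s)$, where $\alpha_x$ is the geodesic from $x \in \Sigma_1$ perpendicular to $\Sigma_1$. Since $r_1 > 0$ on $[0,\delta_1]$, $\Phi$ is smooth with everywhere invertible differential, is injective on $[0,\delta_1) \times \Sigma_1$, and carries $\{\delta_1\} \times \Sigma_1$ onto $S_1$. Two distinct preimages of a point of $S_1$ correspond to perpendicular geodesics reaching it from the two opposite unit normal directions, so each $p \in S_1$ has at most two preimages, making $\Phi|_{\{\delta_1\} \times \Sigma_1} : \Sigma_1 \to S_1$ a Riemannian covering of constant degree $1$ or $2$. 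Degree $1$ is ruled out because it would make $\Phi$ a smooth bijection onto $\Omega$, whereas a neighborhood of a point of $S_1$ in $\Omega$ is a full open ball while its preimage in $[0,\delta_1] \times \Sigma_1$ is only a half-ball, contradicting invertibility of the differential. Hence the degree equals $2$, and the nontrivial deck transformation is a fixed-point-free isometric involution $\sigma : (\Sigma_1, h_1) \to (\Sigma_1, h_1)$, with $S_1 \cong \Sigma_1/\langle \sigma\rangle$.

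I now construct the $\mathbb{Z}_2$-cover. Put $\tilde \Omega = [-\delta_1, \delta_1] \times \Sigma_1$ with candidate metric $\tilde g = du^2 + \tilde r(u)^2 h_1$ where $\tilde r(u) = r_1(\delta_1 - |u|)$, and define $\pi : \tilde\Omega \to \Omega$ by
\begin{equation*}
\pi(u,x) =
\begin{cases}
\alpha_x(\delta_1 - u), & u \geq 0, \\
\alpha_{\sigma(x)}(\delta_1 + u), & u \leq 0.
\end{cases}
\end{equation*}
The identity $\alpha_x(\delta_1) = \alpha_{\sigma(x)}(\delta_1)$ makes $\pi$ well-defined and continuous, and the degree-$2$ analysis shows it is $2$-to-$1$ onto $\Omega$. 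On each half of $\tilde\Omega$ the pullback $\pi^* g$ agrees with $\tilde g$ by a direct computation, using that $\sigma$ is an isometry of $h_1$. Passing to Fermi coordinates on $\Omega$ around a point of $S_1$, smoothness of $g$ on $\Omega$ forces $\pi^* g$ to be smooth across $\{u=0\}$; since the one-sided derivatives satisfy $\tilde r'(0^+) = -r_1'(\delta_1)$ and $\tilde r'(0^-) = r_1'(\delta_1)$, smoothness is equivalent to $r_1'(\delta_1) = 0$.

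Once $r_1'(\delta_1) = 0$ is established, $\tilde r$ extends to a smooth positive solution of \eqref{eq-rRa-s2} on $[-\delta_1, \delta_1]$ with $\tilde r'(0) = 0$, and the pulled-back potential $\tilde \lambda$ is a smooth even solution of \eqref{lambda-e1} vanishing at $\pm \delta_1$. Therefore $(\tilde \Omega, \tilde g)$ is precisely one of the warped products constructed in Example {\bf (1)} after Proposition \ref{examples-p1}, and the isometric action $(u,x) \mapsto (-u, \sigma(x))$ realizes $(\Omega, g)$ as the $\mathbb{Z}_2$-quotient described in Example {\bf (2)}, completing the classification. The principal obstacle in this plan is the smoothness step forcing $r_1'(\delta_1) = 0$; this hinges on combining the degree-$2$ covering analysis of $\Phi$, the isometric nature of the involution $\sigma$, and the smoothness of the ambient metric $g$ on $\Omega$ into a single matching condition at $S_1$.
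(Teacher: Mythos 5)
Your proposal is correct and follows essentially the same route as the paper: reduce to the warped-product structure $\ol{I\times\Sigma}$ via Lemmas \ref{CF-l2}--\ref{CF-l4}, show the normal exponential map induces a degree-two Riemannian covering $\Sigma\to S$, and realize $(\Omega,g)$ as the $\mathbb{Z}_2$-quotient of the doubled warped product from Example {\bf (1)}. The only cosmetic difference is how the matching condition at $S$ is obtained: the paper shows $S$ is totally geodesic by computing its mean curvature as a two-sided limit of the constant mean curvatures $H_s$ (forcing $H=-H=0$), whereas you derive $r'(\delta)=0$ from smoothness of the pulled-back metric across the crease of the double cover --- the same fact in different clothing.
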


\begin{proof} Suppose that $(\Omega,g)$ is not
a geodesic ball in a simply connected space form.
Since the boundary $ \Sigma $ is connected,
by Lemmas \ref{CF-l2}, \ref{CF-l3} and \ref{CF-l4},
we have
\ $\Omega=\ol{I\times\Sigma}$ (closure is taken with respect to $\Omega$) with the metric $ g $ on $ I \times \Sigma $ given by
$ds^2+r^2(s)h$, where $I=[0,\delta)$ for some positive number $\delta$. Now the functions $r$ and $\l$ satisfy \eqref{warped-e1} 
(with $a>0$) and \eqref{warped-e3}.  
Moreover, $S=\ol{I\times\Sigma}\setminus  I\times\Sigma$ is
a connected, embedded  hypersurface
in the interior of $ \Omega$.
Let  $(U; x^1, \ldots, x^n)$ be a
 local coordinate in $ \Omega $ such that
$ U \cap S = \{ x^n = 0 \} . $
Let $ U_+ = \{ x \in U \ | \ x^n > 0 \}$ and
$ U_- = \{ x \in U \ | \ x^n < 0 \} $.
Since $ \Omega \setminus
S = I \times \Sigma_0 $, both $ U_+ $ and $U_- $
are contained in $ I \times \Sigma_0 $. In particular,
as $ s \nearrow \delta $, the surfaces
$ (\{ s \} \times \Sigma_0 ) \cap U_+ \ \mathrm{and} \
( \{ s \} \times \Sigma_0 ) \cap U_- $
converges to $ S \cap U $ from two sides of $ S \cap U $ in
 $U $. As the mean curvature $ H_s $ of
$ \{ s \} \times \Sigma_0 $ is constant for each $ s \in I$,
the mean curvature  $ H $ of $ S \cap U $ is  given by
both $ \lim_{s \rightarrow \delta_-} H_s  $ and
$ - \lim_{s \rightarrow \delta_-} H_s  $. Hence, $ H = 0 $. Since $S$ is totally embilical with constant mean curvature by Lemma \ref{CF-l3},
we conclude that  $S$ is totally geodesic.

Now consider $ \tilde{M} = [ 0, \delta]\times \Sigma $
with the metric $ \tilde{g} = d s^2 + r^2 h $
(the fact $ a > 0 $ implies that $ r $ is smooth up to $ \delta $
with $ r (\delta ) > 0 $).
Let $ D \tilde{M} $ denote the doubling of $ (\tilde{M}, \tilde{g} )$
with respect to $ \Sigma_\delta =  \{ \delta \} \times \Sigma $,
which is totally geodesic
in $ \tilde{M} $. Then $ D \tilde{M} $ is one of the manifolds constructed in   Example (1) after Proposition \ref{examples-p1} 
 (with a reflection symmetry across a totally geodesic
hypersurface).
Let $ \Sigma_\delta $, $ S$ be equipped with the induced metric
from $ \tilde{g} $, $ g$. Consider the map
$ \phi : \Sigma_\delta \rightarrow S $ given by
$ \phi ( \delta, x) = \alpha_x(\delta) $,
where  $ \alpha_x (s) $ is the geodesic
in $ \Omega $ starting from $ x $ and perpendicular to $ \Sigma$.
It follows from the facts that $ S $ is an embedded hypersurface
and  each $ \alpha_x (s) $ is perpendicular to $ S $ at
$ \alpha_x (\delta) $ that $ \phi $ is a local isometry between
$ \Sigma_\delta $ and $ S $. Since $ \Sigma_\delta$ and $ S $
are both  compact, $ \phi $ is an covering map.
Let $ p \in S $, suppose there are three  points $ x $, $y$,
$z$ in $ \Sigma$ such that $ \alpha_x (\delta) = \alpha_y (\delta)
= \alpha_z ( \delta) = p $, then two of
$ \alpha_x^\prime (\delta)$, $ \alpha_y^\prime (\delta)$
$ \alpha_z^\prime (\delta)$ must be the same as all of them
are perpendicular to $ S $ at $ p $. Hence, $x$, $y$ and $z $
can not be distinct. This implies that
$ \phi $ is either injective or a double cover. If $ \phi $ is injective,
then the map $ x \mapsto \alpha_x(\delta)$ would be a diffeomorphism
from $ \Sigma $ to $ S$, hence $ \Omega \setminus \ol{I \times \Sigma} \neq \emptyset$, which is a contradiction. We conclude that
$ \phi $ is a double cover. Hence $(\Omega,g)$ is one of the manifolds constructed in Example {\bf (2)}  after Proposition \ref{examples-p1}.  
\end{proof}

Theorem \ref{CF-s0-t2}  in the introduction now follows directly from 
Theorem \ref{CF-t1}, Theorem \ref{CF-t2} and Proposition
\ref{prop-1-s2}.
Since the manifolds constructed in   Example {\bf (2)} after Proposition \ref{examples-p1}  are not simply connected, we also have the following:

\begin{cor} Let  $(\Omega,g)$ be as in Theorem \ref{CF-t2} satisfying ASSUMPTION A. Suppose $\Omega$ is simply connected, then $(\Omega,g)$ is a geodesic ball in a simply connected space form.
\end{cor}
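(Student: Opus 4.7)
The plan is to invoke Theorem \ref{CF-t2} directly and then eliminate the second alternative using simple connectedness. By Theorem \ref{CF-t2}, the hypotheses (connected boundary, ASSUMPTION A) imply that $(\Omega,g)$ is either a geodesic ball in a simply connected space form, in which case there is nothing more to prove, or it is one of the manifolds constructed in Example \textbf{(2)} after Proposition \ref{examples-p1}. Thus it suffices to show the second alternative cannot occur when $\Omega$ is simply connected.

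Recall that a manifold from Example \textbf{(2)} with connected boundary is obtained as a quotient $(\wt\Omega,\wt g)/H$, where $\wt\Omega = [\zeta_2,\zeta_1]\times N$ is a connected warped product over a closed Einstein manifold $N$, and $H$ is a subgroup of $G\times\mathbb{Z}_2$ acting freely by isometries on $\wt\Omega$, subject to the condition $H\neq H\cap(G\times\{0\})$. This condition is precisely what forces the boundary of the quotient to be connected, and it implies in particular that $H$ contains at least one element of the form $(\alpha,1)$, hence $H$ is nontrivial.

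The key observation is then that the natural projection $\pi:\wt\Omega\to\wt\Omega/H$ is a connected covering map of degree $|H|\geq 2$, since $H$ acts freely and properly discontinuously on the connected manifold $\wt\Omega$. If the base $\wt\Omega/H\cong\Omega$ were simply connected, then any connected covering of it would be trivial, forcing $|H|=1$, contradicting the paragraph above. Therefore Example \textbf{(2)} is incompatible with $\Omega$ being simply connected, and we conclude that $(\Omega,g)$ must be a geodesic ball in a simply connected space form.

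There is no substantial obstacle here; the argument is essentially a bookkeeping consequence of Theorem \ref{CF-t2}. The only point requiring a bit of attention is to confirm, from the explicit description in Example \textbf{(2)}, that the connectedness of $\partial(\wt\Omega/H)$ genuinely forces $H$ to be nontrivial (i.e.\ to contain an element with nonzero $\mathbb{Z}_2$ component), which is exactly what the construction in the paper records.
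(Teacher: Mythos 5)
Your proposal is correct and follows essentially the same route as the paper, which simply observes that the manifolds of Example \textbf{(2)} are not simply connected and hence are excluded; you merely supply the (standard) justification via the degree-$|H|\geq 2$ covering $\wt\Omega\to\wt\Omega/H$, noting that connectedness of the boundary forces $H$ to contain an element $(\alpha,1)$ and hence to be nontrivial.
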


\vskip .2cm

 \centerline{\appendix{APPENDIX}}\vskip .2cm

In this appendix, we  include estimates of graphical 
representation  of hypersurfaces with bounded
second fundamental form, which is needed in Section 4.

 Let $M^{n}$ be a complete Riemannian manifold and $N$ be an
 immersed hypersurface in $M$. Assume the following:
 \begin{itemize}
   \item [({\bf a1})] The curvature $Rm$ and the covariant
   derivative $DRm$ of the curvature of $M$ are bounded.
   \item [({\bf a2})] The injectivity radius of $M$ is bounded from
   below.
   \item[({\bf a3})] The norm of the second fundamental form
   of $N$ is uniformly bounded.
 \end{itemize}
 
 The following lemma was proved in \cite{Hamilton95-1}.
 
 \begin{lma}\label{normalcoordinates1} There exist $r>0$
 and constant $C>0$ depending only on the bounds in ({\bf
 a1}-{\bf a2}) such that for any $p\in M$, the exponential
 map is a diffeomorphism in $B(p,r)$. Moreover, if
 $(x^1,\dots,x^{n})$ are normal coordinates, then the
 component of the metric tensor in these coordinates
 satisfy:
 \begin{equation}\label{metric1}
   |g_{ab}-\delta_{ab}|(x)+|\frac{\p}{\p x^c}g_{ab}|(x)\le
   C|x|.
 \end{equation}
 \end{lma}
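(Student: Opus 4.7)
The plan is to deduce this standard lemma from the Taylor expansion of the metric in geodesic normal coordinates, once the injectivity radius bound has secured the existence of such coordinates on a uniformly sized ball. By hypothesis ({\bf a2}) there is some $\iota_0 > 0$ such that for every $p \in M$ the exponential map $\exp_p \colon B(0,\iota_0) \subset T_pM \to B(p,\iota_0) \subset M$ is a diffeomorphism; fixing an orthonormal frame at $p$ and pulling back by $\exp_p^{-1}$ produces the normal coordinates $(x^1,\ldots,x^n)$ on $B(p,\iota_0)$.

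Next I would invoke the classical expansion
$$
g_{ab}(x) = \delta_{ab} - \tfrac{1}{3} R_{acbd}(p)\, x^c x^d + O(|x|^3), \qquad
\partial_e g_{ab}(x) = -\tfrac{2}{3} R_{aebd}(p)\, x^d + O(|x|^2),
$$
in which the implied constants depend only on pointwise bounds for $Rm$ and $DRm$. The cleanest derivation is via Jacobi fields: along the radial geodesic $\gamma_v(t) = \exp_p(tv)$, the coordinate vector fields are realized as Jacobi fields $J_a(t) = (d\exp_p)_{tv}(t e_a)$ with $J_a(0) = 0$, $J_a'(0) = e_a$, and iterated use of the Jacobi equation $J_a'' = -R(J_a,\dot\gamma)\dot\gamma$ together with the uniform bounds supplied by ({\bf a1}) yields the stated error terms to the required order.

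Combining these gives, on $B(p,\iota_0)$,
$$
|g_{ab}(x) - \delta_{ab}| \le C_1 |x|^2 + C_2 |x|^3, \qquad |\partial_e g_{ab}(x)| \le C_3 |x| + C_4 |x|^2,
$$
with all constants depending only on $\sup |Rm|$ and $\sup |DRm|$. Setting $r = \min\{\iota_0, 1\}$ and adding the two estimates produces $|g_{ab} - \delta_{ab}|(x) + |\partial_c g_{ab}|(x) \le C |x|$ on $B(p,r)$ for a single constant $C$ that is uniform in $p \in M$. The argument is entirely local and the only nontrivial input — the curvature-controlled Taylor expansion in normal coordinates — is classical and is precisely what is used in \cite{Hamilton95-1}; accordingly I would cite that source rather than reprove the expansion, so no substantive obstacle arises. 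Note that the second fundamental form hypothesis ({\bf a3}) plays no role here; it enters only in the subsequent graphical representation lemma where the hypersurface $N$ must be written as a graph over a ball in the tangent plane.
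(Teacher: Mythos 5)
The paper offers no proof of this lemma at all --- it simply states that it ``was proved in \cite{Hamilton95-1}'' --- and your proposal ultimately does the same, deferring the curvature-controlled expansion of the metric in normal coordinates to that reference while sketching the standard Jacobi-field derivation; this is correct and consistent with the paper. The only minor overstatement is your claim that the $O(|x|^2)$ remainder in the expansion of $\partial_e g_{ab}$ has constants depending only on $\sup|Rm|$ and $\sup|DRm|$ (a genuine second-order Taylor remainder there would involve $D^2Rm$), but the weaker estimate $|\partial_c g_{ab}|(x)\le C|x|$ that the lemma actually asserts does follow from $\partial g(0)=0$ together with the uniform bound on $\partial^2 g$ supplied by ({\bf a1})--({\bf a2}), so your conclusion stands.
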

 
Next, we let $a,b,c,\dots$ denote indices ranging from $1$ to $n$ and
$i,j,k,\dots$ denote indices ranging from $1$ to $n-1$.
Let $p\in N$ and let $x^a$ be normal coordinates in
$B(p,r)$ such that $x^{n}=0$ is the tangent plane of $N$ at
$p$.
\begin{lma}\label{graph1} There exists $r>\rho_0>0$
independent of $p$ and a function
$w=w(x^1,\dots,x^{n-1})=w(x')$ defined on $|x'|\le \rho_0$,
where $|x'|^2=\sum_{i=1}^{n-1} (x^i)^2$, such that
$\{(x',w(x'))|\ |x'|\le \rho_0\}$ is part of $N$ passing
through $p$. Moreover, there is a constant $C_1$
independent of $p$ such that $|w|+|\p_i w|+|\p_i\p_j w|\le
C_1$ in $|x'|\le \rho_0$. Here $\p_i w=\frac{\p w}{\p x^i}$
and $\p_i\p_j w=\frac{\p^2 w}{\p x^i\p x^j}$.
 \end{lma}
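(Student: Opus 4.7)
\noindent\textbf{Proof proposal for Lemma \ref{graph1}.} The plan is to represent $N$ as a graph using the implicit function theorem and then run a bootstrap/continuity argument driven by the bound on $|A|$.

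First, I would work in the normal coordinates $(x^1,\dots,x^n)$ of Lemma \ref{normalcoordinates1} centered at $p$, with $x^n=0$ the tangent hyperplane $T_pN$. The metric bounds $|g_{ab}-\delta_{ab}|+|\partial_c g_{ab}|\le C|x|$ imply a uniform bound $|\Gamma^c_{ab}|\le C|x|$ on $B(p,r)$, and all constants in what follows will depend only on the bounds in $(\mathbf{a1})$--$(\mathbf{a3})$ and on $r$. Since $N$ is smoothly immersed and tangent to $\{x^n=0\}$ at the origin, the standard implicit function theorem gives some initial $\rho_*>0$ and a smooth $w$ on $|x'|<\rho_*$ with $w(0)=0$, $\partial_iw(0)=0$, such that the connected component of $N\cap\{|x'|<\rho_*,\ |x^n|<1\}$ containing $p$ coincides with the graph $\{(x',w(x'))\}$.

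Next, on any such graphical piece I would compute the second fundamental form $A$ of the graph in these coordinates. A direct computation, using the downward normal $\nu=\lf(W\ri)^{-1}(-\partial_iw\,e^i+e^n)$ raised by $g^{ab}$ (where $W$ is the usual normalizing factor), gives a formula of the schematic form
\be
A_{ij}=\frac{1}{W}\Bigl[\partial_i\partial_j w+E_{ij}(g,\Gamma,w,\nabla w)\Bigr],
\ee
where $E_{ij}$ is polynomial in $\nabla w$ with coefficients involving $g_{ab}$ and $\Gamma^c_{ab}$, and vanishes when $g$ is flat and $w\equiv 0$. The bound $|A|\le C_0$ from $(\mathbf{a3})$, combined with the metric/Christoffel bounds, then yields an estimate of the form
\be
|\partial_i\partial_j w|\le C_2\bigl(1+|\nabla w|^2\bigr)\quad\text{whenever}\quad |\nabla w|\le 1,\ |x'|\le r/2,
\ee
with $C_2$ independent of $p$.

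The main step is the bootstrap. Let $\rho_0\in(0,r/2]$ be the supremum of radii $\rho\le r/2$ such that the graph $w$ extends smoothly to $|x'|\le\rho$ with $|\nabla w|\le 1/2$ and $|w|\le 1/2$. Since $\nabla w(0)=0$ and $w(0)=0$, such $\rho_0>0$ exists. On the extension one has $|\partial_i\partial_j w|\le 2C_2$, so integrating from the origin gives $|\partial_iw(x')|\le 2C_2|x'|$ and $|w(x')|\le C_2|x'|^2$. Choosing $\rho_0=\min\{r/2,\ 1/(8C_2)\}$ forces $|\nabla w|\le 1/4$ and $|w|\le 1/4$ throughout $|x'|\le\rho_0$, so the extension is not obstructed by the defining inequalities and, by the implicit function theorem applied at boundary points, not by $N$ ceasing to be graphical. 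This yields the uniform lower bound on $\rho_0$ and simultaneously the estimates $|w|+|\partial_iw|+|\partial_i\partial_jw|\le C_1$, with $C_1$ depending only on $C_0,C,r$.

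The one delicate point is ensuring that the connected component of $N\cap B(p,r)$ through $p$ does not leave the graphical region before $|x'|$ reaches $\rho_0$—that is, that $N$ does not ``fold back.'' This is handled by noting that the bound $|A|\le C_0$ controls the turning of the unit normal along any curve in $N$: parallel transporting along a path of length $\le L$ changes $\nu$ by at most $C_0L+O(L\cdot\sup|Rm|\cdot L)$, so for $L$ below a uniform threshold the normal stays within $\pi/4$ of $\partial_n$, which is exactly the condition making the graph representation persist. Combined with the lower injectivity radius bound from $(\mathbf{a2})$, this closes the continuity argument and completes the proof.
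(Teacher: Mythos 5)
Your proposal is correct and follows essentially the same strategy as the paper: represent $N$ as a graph over its tangent plane in the normal coordinates of Lemma \ref{normalcoordinates1}, use the bound on $|A|$ together with the metric/Christoffel estimates to control $|\p_i\p_j w|$ in terms of $\sup|\nabla w|$, integrate from the origin (where $\nabla w=0$) to get smallness of $\nabla w$ on a uniform ball, and close with a continuity argument on the maximal graphical radius. The paper carries out the second fundamental form computation more explicitly (via $W=w-x^n$ and the decomposition $|A|^2\ge I-2II$), but the logic is the same as yours.
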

 \begin{proof} Let $w$ be a function defined near $x'=0$
 such that $(x',w)$ part of $N$ near $p$ and $(x',w)$ is  inside $B(p,r)$. Suppose $\rho>0$
 is such that  the function $w(x')$ can be extended and defined
 in $|x'|\le \rho<r$ with $(x',w)$ being  part of
 $N$ and   inside $B(p,r)$. We want to estimate $|\p_iw|$.
 Let $W(x)=w-x^{n}$. Then the norm of the second
 fundamental form is:
 \begin{equation}\label{2ndfundamental1}
  \begin{split} |A|^2&=\sum_{1\le
   a,b,c,d\le n+1}\lf(g^{ac}-\frac{W^aW^c}{|DW|^2}\ri)
   \lf(g^{bd}-\frac{W^bW^d}{|DW|^2}\ri)\lf(\frac{W_{ab}}{|DW|}\ri)
   \lf(\frac{W_{cd}}{|DW|}\ri)\\
   &=\sum_{1\le
   a,b,c,d\le n+1}g^{ac}g^{bd}\lf(\frac{W_{ab}}{|DW|}\ri)
   \lf(\frac{W_{cd}}{|DW|}\ri)\\
   &-2\sum_{1\le
   a,b,c,d\le n+1}g^{ac}\frac{W^bW^d}{|DW|^2}\lf(\frac{W_{ab}}{|DW|}\ri)
   \lf(\frac{W_{cd}}{|DW|}\ri)\\
   &+\lf(\sum_{1\le a,b\le
   n+1}\frac{W^aW^bW_{ab}}{|DW|}\ri)^2\\
   &=I-2II+III\\
   &\ge I-2II
   \end{split}
 \end{equation}
where $DW$ is the gradient of $W$, $DW=W^a\frac{\p}{\p x^a}$, and
$W_{ab}$ is the Hessian of $W$. (See \cite{{SchoenYau81}}).  In the following, $C$ always denotes a constant depending
only on the bound in assumptions ({\bf a1}-{\bf a3}) and $n$, and
$f(\rho)$ is a function such that $|f(\rho)|\le C\rho$. They may
vary from line to line.

 Let $G(\rho)=\sup_{|x'|\le \rho}|\p w|$,
where $\p w= (\p_1 w ,\dots,\p_nw)$ and the norm is w.r.t. the
Euclidean metric. We have the following estimates for $|x'|\le
\rho$:
\begin{equation}\label{estimates1}
    |w(x')|  \le   G(\rho)|x'|
\end{equation}
\begin{equation}\label{estimates2}
 \begin{split}
   W^a&=g^{ab}W_b \\
      & =W_a+(g^{ab}-\delta^{ab})W_b\\
      &=W_a+\lf(1+G(\rho)\ri)f(\rho)
  \end{split}
\end{equation}
where $W_a=\frac{\p W}{\p x^a}$,
\begin{equation}\label{estimates3}
 \begin{split}
   W_{ij}&=\frac{\p^2 W}{\p x^i\p x^j}-\Gamma_{ij}^aW_a\\
      & =w_{ij}+ \lf(1+G(\rho)\ri)f(\rho)
  \end{split}
\end{equation}
\begin{equation}\label{estimates4}
 \begin{split}
   W_{an}&=\frac{\p^2 W}{\p x^a\p x^n}-\Gamma_{an}^bW_b\\
      & =  \lf(1+G(\rho)\ri)f(\rho)
  \end{split}
  \end{equation}
Hence
\begin{equation}\label{estimates5}
\begin{split}
  I &= \sum_{1\le
   a,b\le n} \frac{W_{ab}^2}{|DW|^2} +(1+G(\rho))f(\rho)\sum_{1\le
   a,b\le n} \frac{W_{ab}^2}{|DW|^2}
\end{split}
\end{equation}
\begin{equation}
\begin{split}
  II &=\sum_{1\le
   a,b,d\le n}\frac{W^bW^d}{|DW|^2}\lf(\frac{W_{ab}}{|DW|}\ri)
   \lf(\frac{W_{ad}}{|DW|}\ri)\\
&+\sum_{1\le
   a,b,c,d\le n}(g^{ac}-\delta^{ac})\frac{W^bW^d}{|DW|^2}\lf(\frac{W_{ab}}{|DW|}\ri)
   \lf(\frac{W_{cd}}{|DW|}\ri)\\
   &=\frac{1}{|DW|^4}\lf(\sum_{1\le i,j,k\le n-1}W^iW^jW_{ki}W_{kj}\ri)+(1+G(\rho))f(\rho)\sum_{1\le
   a,b\le n} \frac{W_{ab}^2+|W_{ab}|}{|DW|^2}\\
   &=\frac{1}{|DW|^4}\sum_{1\le k\le n-1}\lf(\sum_{1\le i\le n-1}W^iW_{ki}\ri)^2+(1+G(\rho))f(\rho)\sum_{1\le
   a,b\le n} \frac{W_{ab}^2+|W_{ab}|}{|DW|^2}
\end{split}
\end{equation}
Hence
\begin{equation}\label{estimates6}
\begin{split}
II &\le
    \frac{1}{|DW|^4}\sum_{1\le i\le n-1}(W^i)^2\sum_{1\le k, i\le n-1}(W_{ki})^2+(1+G(\rho))f(\rho)\sum_{1\le
   a,b\le n} \frac{W_{ab}^2+|W_{ab}|}{|DW|^2}\\
   &\le \frac{2}{|DW|^4}\sum_{1\le i\le n-1}(W^i)^2\sum_{1\le k, i\le n-1}
    \lf[w_{ki}^2+(1+G(\rho))^2f^2(\rho)\ri]\\
    &+(1+G(\rho))f(\rho)\sum_{1\le
   a,b\le n} \frac{W_{ab}^2+|W_{ab}|}{|DW|^2}\\
   &\le \frac{4}{|DW|^4}\sum_{1\le i\le n-1} \lf[w_i^2+(1+G(\rho))^2f^2(\rho)
   \ri]\sum_{1\le k, i\le
   n-1}w_{ki}^2+\frac{(1+G(\rho))^2f^2(\rho)}{|DW|^2}\\
   &+(1+G(\rho))f(\rho)\sum_{1\le
   a,b\le n} \frac{W_{ab}^2}{|DW|^2}\\
   &\le
   \frac{8n\lf[G^2(\rho)(1+f^2(\rho))+f^2(\rho)\ri]}{|DW|^4}\sum_{1\le k, i\le
   n-1}w_{ki}^2+\frac{(1+G(\rho))^2f^2(\rho)}{|DW|^2}\\
   &+(1+G(\rho))f(\rho)\sum_{1\le
   a,b\le n} \frac{W_{ab}^2}{|DW|^2}
  \end{split}
\end{equation}
So
\begin{equation}\label{estimates7}
\begin{split}
|A|^2&\ge\lf[1-(1+G(\rho))f(\rho)\ri]\frac{\sum_{1\le a,b\le
n}W_{ab}^2}{|DW|^2}\\
&-\frac{16n\lf[G^2(\rho)(1+f^2(\rho))+f^2(\rho)\ri]}{|DW|^4}\sum_{1\le
k, i\le
   n-1}w_{ki}^2-\frac{(1+G(\rho))^2f^2(\rho)}{|DW|^2}
\end{split}
\end{equation}

 Hence there exist $\gamma>0$ and
$r>\rho_1>0$ depending only on the bounds in ({\bf a1})--({\bf
a3}) and $n$ such that if $\rho\le \rho_1$ and $G(\rho)\le
\alpha$, then
$$
\lf|(1+G(\rho))f(\rho)\ri|\le \frac14,
$$
and

$$ \lf|16n\lf[G^2(\rho)(1+f^2(\rho))+f^2(\rho)\ri]\ri|\le\frac14|DW|^2.
$$
And so
\begin{equation}\label{estimates8}
\sup_{|x'|\le \rho}\sum_{ij}w_{ij}^2\le C(1+G^2(\rho)).
\end{equation}
Since $w_i=0$ at $x'=0$, we have
\begin{equation}\label{estimates9}
G(\rho)\le  C\rho ( 1+ G(\rho))
\end{equation}
provided $\rho\le \rho_1$ and $G(\rho)\le \gamma$. Hence

\begin{equation}\label{estimates9-1}
G(\rho)\le \frac{C\rho}{1-C\rho}
\end{equation}
provided $\rho\le \rho_1$, $G(\rho)\le \gamma$  and
$C\rho_1<1$. Now choose $\rho_0$ such that
$0<\rho_0<\rho_1$, $C\rho_0<1$ and
$\frac{C\rho}{1-C\rho}\le \frac\alpha2$.

Let $\rho*\le \rho_0$ be the supremum of $\rho$ such that
$w$ can be extended on $|x'|\le\rho$ so that $(x',w(x'))$
is part of $N$ and such that $G(\rho)\le \frac\alpha2$. We
claim that $\rho^*=\rho_0$. Suppose $\rho^*<\rho_1$. Since
$|\p w|\le \frac\alpha2$ in $|x'|<\alpha^*$, $w$ can be
extended to $|x'|=\rho^*$ and beyond. That is, we can find
$\rho^*<\rho_1\le \rho_0$ such that $w$ can be extended to
$|x'|\le \rho_1<\rho_0$ such that $G(\rho_1)\le \alpha$. We
have
$$
G(\rho_1)\le \frac{C\rho_1}{1-C\rho_1}\le \frac\alpha2.
$$
This contradicts the definition of $\rho^*$.
 \end{proof}

\bibliographystyle{amsplain}

\end{document}